\newtheorem{Theorem}{Theorem}[section]
\newtheorem{Lemma}[Theorem]{Lemma}
\newtheorem{Proposition}[Theorem]{Proposition}
\newtheorem{Corollary}[Theorem]{Corollary}
\theoremstyle{definition}
\newtheorem{Definition}[Theorem]{Definition}
\newtheorem{Example}[Theorem]{Example}
\theoremstyle{remark}
\newtheorem{Remark}[Theorem]{Remark}
\numberwithin{equation}{section}
\numberwithin{figure}{section}
\newcommand{\tlogd}{\TT_{X}(-\log D)}
\newcommand{\OO}{\mathcal{O}}
\renewcommand{\AA}{\mathcal{A}}
\newcommand{\CC}{\mathbb{C}}
\newcommand{\ZZ}{\mathbb{Z}}
\newcommand{\TT}{\mathcal{T}}
\newcommand{\DD}{\mathcal{D}}
\newcommand{\HH}{\mathbb{H}}
\newcommand{\LL}{\mathcal{L}}
\newcommand{\MM}{\mathcal{M}}
\newcommand{\PP}{\mathbb{P}}
\newcommand{\D}{\mathbb{D}}
\newcommand{\wt}{\widetilde}
\newcommand{\dlog}{\mathrm{dlog}}
\newcommand{\ord}{\mathrm{ord}}
\begin{document}

\title{Log Picard algebroids and meromorphic line bundles}

\author{Marco Gualtieri}\email{mgualt@math.toronto.edu} 
\author{Kevin Luk}\email{kevin.kh.luk@gmail.com}

\begin{abstract}
We introduce logarithmic Picard algebroids, a natural class of Lie algebroids adapted to a simple normal crossings divisor on a smooth projective variety.  We show that such algebroids are classified by a subspace of the de Rham cohomology of the divisor complement determined by its mixed Hodge structure.  We then solve the prequantization problem, showing that under the appropriate integrality condition, a log Picard algebroid is the Lie algebroid of symmetries of what is called a meromorphic line bundle, a generalization of the usual notion of line bundle in which the fibres degenerate along the divisor.  We give a geometric description of such bundles and establish a classification theorem for them,  showing that they correspond to a subgroup of the holomorphic line bundles on the divisor complement.  Importantly, these holomorphic line bundles need not be algebraic.  Finally, we provide concrete methods for explicitly constructing examples of meromorphic line bundles, such as for a smooth cubic divisor in the projective plane.   
%We introduce Picard algebroids in the logarithmic context for the situation where $D$ is a simple normal crossings divisor on a smooth projective variety $X$. In order to solve the geometric prequantization problem for log Picard algebroids, we introduce the notion of meromorphic line bundles. We use Hodge-theoretic techniques to obtain geometric results for both log Picard algebroids and meromorphic line bundles. In addition, we provide a concrete cohomological and geometric description of meromorphic line bundles. 
\end{abstract}

\maketitle
\tableofcontents
\pagebreak
\section{Introduction}	\label{sec 1}

The Atiyah algebroid of a holomorphic line bundle $L$ is the vector bundle whose sections are the infinitesimal symmetries of $L$~\cite{MR0086359}. These symmetries, otherwise known as derivations of $L$ or as $\CC^*$--invariant vector fields on the total space of $L$, form an extension of the tangent sheaf by the sheaf of bundle endomorphisms, which coincides with the structure sheaf in this case.  Such Lie algebroid extensions of the tangent sheaf by the structure sheaf were more generally considered by Be{\u\i}linson and Bernstein~\cite{BeilinsonBernstein93} and called \emph{Picard algebroids}.  A Picard algebroid is the Atiyah algebroid of a line bundle only if it is \emph{prequantizable}, that is, its curvature class satisfies an integrality constraint which relates it to the first Chern class of the line bundle in question. 

We develop a generalization of the above theory in which the tangent sheaf $\TT_X$ of a smooth projective variety $X$ is replaced by the logarithmic tangent sheaf $\TT_X(-\log D)$ of vector fields tangent to a simple normal crossings divisor $D\subset X$.  
In Section~\ref{sec 2}, we introduce and classify logarithmic Picard algebroids, and observe that due to the failure of the Poincar\'e lemma for the log de Rham complex, flat connections may not exist even locally. This leads us to focus on log Picard algebroids which are locally trivial, for which local flat connections do exist.  The classification of these is particularly interesting: we show in Theorem~\ref{loc trivial log PA Hodge thm} that they correspond to a certain intersection of the Hodge and weight filtrations on the cohomology of the complement $U=X\backslash D$.  

Our main problem is then to determine what type of object such logarithmic Picard algebroids may prequantize to, and what the quantization condition is.  In Section~\ref{sec 3} we show that the natural prequantum object is a meromorphic line bundle, a type of line bundle in which the transition functions may have zeros or poles along the divisor $D$.  Meromorphic line bundles have appeared in the literature (see~\cite{MR2915468,MR2600125}), but we did not find a treatment of their basic properties. We make a careful study of these bundles, showing in Section~\ref{local description subsec} that they may be realized as holomorphic $\CC^*$ principal bundles on $U$ whose fibers degenerate over $D$ to an infinite chain of rational curves.  We then classify meromorphic line bundles using the fact that the first Chern class lies in a subgroup of $H^2(U,\ZZ)$ determined by a version of Deligne's mixed Hodge theory with integral coefficients.  Finally, we prove in Theorem~\ref{log PA prequantization theorem} that a log Picard algebroid may be prequantized if and only if its class lies in this subgroup of the integral cohomology of $U$.  Indeed, our results may be viewed in the context of the more general problem of giving a geometric interpretation to integral classes in the various subspaces of cohomology determined by the Hodge and weight filtrations.

In Section~\ref{sec 4}, we use ramified coverings and blowups to provide new methods for explicitly constructing meromorphic line bundles, ending with a concrete construction for the group $\ZZ_3\oplus \ZZ\oplus\ZZ$ of meromorphic line bundles on $\CC P^2$ with pole along a smooth cubic divisor; these constructions are necessarily non-algebraic in nature, as meromorphic line bundles are holomorphic but not necessarily algebraic upon restriction to the divisor complement.

\vspace{.05in}

\noindent \textbf{Acknowledgements.} We would like to thank Ron Donagi, Lisa Jeffrey, Sheldon Katz, Chia-Cheng Liu and Steven Rayan for helpful conversations. M.G. was supported by an NSERC Discovery Grant and K.L. was supported by an Ontario Graduate Scholarship.

\section{Logarithmic Picard algebroids}	\label{sec 2}

%\subsection{Basic setup}

Throughout the paper, we work with $(X,D)$ where $X$ is a smooth projective variety over $\mathbb{C}$
 %\kevin{all examples/results will involve projective varieties $X$ only} 
 of dimension $n$ and $D$ is a simple normal crossings divisor in $X$.  We denote by $\tlogd$ the logarithmic tangent sheaf, consisting of vector fields tangent to $D$, and by $(\Omega^{\bullet}_{X}(\log D), d)$ the corresponding logarithmic de Rham complex.  Since $D$ has simple normal crossings, $\TT_{X}(-\log D)$ (and hence each $\Omega^{k}_{X}(\log D):=\wedge^{k}\Omega_{X}^{1}(\log D)$) is a locally free $\OO_{X}$-module. The Hodge numbers of $X$ will be denoted by $h^{i,j}:=\dim H^{j}(\Omega_{X}^{i})$. 

We work in the analytic topology in all settings unless otherwise specified.

 % unless otherwise specified.  
%\kevin{definition of normal crossings divisor modified: "`intersect transversally"' removed}

\begin{Definition}
A log Picard algebroid $(\AA,[\cdot,\cdot],\sigma,e)$ on $(X,D)$ is a locally free $\OO_{X}$-module $\AA$ equipped with a Lie bracket $[\cdot,\cdot]$, a bracket-preserving morphism of $\OO_{X}$-modules $\sigma:\AA\rightarrow \tlogd$, and a central section $e$, such that the Leibniz rule
\begin{equation}\label{leibniz}
[a_{1},fa_{2}]=f[a_{1},a_{2}]+\sigma(a_{1})(f)a_{2}
\end{equation}
holds for all $f\in\OO_{X}$, $a_{1},a_{2}\in \AA$, and the sequence 
\begin{equation}\label{exlalg}
0\rightarrow \OO_{X} \overset{e}{\rightarrow} \AA \overset{\sigma}{\rightarrow} \tlogd \rightarrow 0 
\end{equation}
is exact. We usually denote the quadruple $(\AA,[\cdot,\cdot],\sigma,e)$ simply by $\AA$. 
%\kevin{modified the definition of log Picard algebroids; the map from $\OO_{X}$ to $\AA$ is now defined}
\end{Definition}

\begin{Remark}
When the divisor is empty, we recover the notion of a Picard algebroid as given in \cite{BeilinsonBernstein93}. 
\end{Remark}

Log Picard algebroids over a fixed base $(X,D)$ form a Picard category, with monoidal structure given as follows. For $(\AA_i,[\cdot,\cdot]_i, \sigma_i, e_i)_{i=1,2}$ log Picard algebroids, we define the Baer sum
\begin{equation}\label{monoidal1}
\AA_1 \boxplus \AA_2 = (\AA_1\oplus_{\TT_{X}(-\log D)} \AA_2) / \left<(e_1,-e_2)\right>,
\end{equation}
equipped with the morphism $\sigma_1+\sigma_2$ to $\TT_{X}(-\log D)$, componentwise Lie bracket, and the central section $[(e_1,e_2)]$.  
%This Picard category is also endowed with an action of scalars $\lambda\in\CC^*$:
%\begin{equation}\label{monoidal2}
%\lambda\cdot (\AA,[\cdot,\cdot], \sigma, e) = (\AA, [\cdot,\cdot], \sigma, \lambda^{-1} e).
%\end{equation}
\begin{Definition}
A \textit{\textbf{connection}} on a log Picard algebroid $\AA$ is a splitting 
\[
\nabla: \tlogd\rightarrow \AA
\]
of the sequence~\eqref{exlalg}. The sheaf of connections therefore forms a sheaf of affine spaces over $\Omega^1_X(\log D)$. 
Given connections $\nabla,\nabla'$ on $\AA,\AA'$, the sum $\nabla +\nabla'$ defines a natural connection on the Baer sum $\AA \boxplus \AA'$, so that the abovementioned Picard category structure extends to log Picard algebroids with connection. 

The \textit{\textbf{curvature}} associates to any connection $\nabla$ the 2-form $F_{\nabla}\in\Omega^{2}_{X}(\log D)$ defined by
\begin{equation}\label{curvdef}
[\nabla\xi_{1},\nabla\xi_{2}]-\nabla[\xi_{1},\xi_{2}] = F_{\nabla}(\xi_{1},\xi_{2}) e,
\end{equation}
for $\xi_{1},\xi_{2}\in \tlogd$.
\end{Definition}

\begin{Lemma}\label{curvtr}
The curvature $F_{\nabla}$ is a closed logarithmic 2-form.   Translating the connection by $A\in\Omega^1_{X}(\log D)$ gives the curvature  
\begin{equation}\label{torscurv}
F_{\nabla + A} = F_{\nabla} + dA,
\end{equation}
and if $\psi:\AA\to\AA'$ is an isomorphism of log Picard algebroids, then 
$F_{\psi\circ\nabla} = F_\nabla$.  Finally, if $\nabla,\nabla'$ are connections for $\AA,\AA'$, then the curvature of the Baer sum is $F_{\nabla + \nabla'} = F_{\nabla} + F_{\nabla'}$.
\end{Lemma}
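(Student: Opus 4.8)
The statement bundles together four independent assertions, and the plan is to verify each directly from the definition~\eqref{curvdef} of curvature together with the Leibniz rule~\eqref{leibniz} and the Jacobi identity for the bracket on $\AA$.

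First I would establish closedness of $F_\nabla$. Since $F_\nabla$ is defined by $[\nabla\xi_1,\nabla\xi_2]-\nabla[\xi_1,\xi_2] = F_\nabla(\xi_1,\xi_2)e$, I would apply the Jacobi identity in $\AA$ to the three sections $\nabla\xi_1,\nabla\xi_2,\nabla\xi_3$ for $\xi_i\in\tlogd$. Expanding each double bracket using~\eqref{curvdef} and the fact that $e$ is central, the $\nabla$-triple-bracket terms cancel by the Jacobi identity in $\tlogd$, and collecting the coefficient of $e$ yields exactly the Cartan formula for $dF_\nabla(\xi_1,\xi_2,\xi_3)$ evaluated on the log vector fields — i.e. the cyclic sum $\sum \sigma(\nabla\xi_i)\bigl(F_\nabla(\xi_{j},\xi_{k})\bigr) - \sum F_\nabla([\xi_i,\xi_j],\xi_k)$, using $\sigma\circ\nabla = \mathrm{id}$. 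Hence $dF_\nabla = 0$ in $\Omega^3_X(\log D)$. This is the step I expect to be the main obstacle, only in the sense that it requires careful bookkeeping of signs and of the derivation action; everything else is essentially formal.

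Next, for the transformation rule~\eqref{torscurv}: given $A\in\Omega^1_X(\log D)$, the new connection is $\nabla' = \nabla + A e$ (viewing $A$ as an $\OO_X$-linear map $\tlogd\to\OO_X e\subset\AA$). Substituting into~\eqref{curvdef} and expanding $[\nabla\xi_1 + A(\xi_1)e,\ \nabla\xi_2 + A(\xi_2)e]$ using the Leibniz rule~\eqref{leibniz} and centrality of $e$, the cross terms produce $\sigma(\nabla\xi_1)\bigl(A(\xi_2)\bigr)e - \sigma(\nabla\xi_2)\bigl(A(\xi_1)\bigr)e$, while the corrected $\nabla'[\xi_1,\xi_2]$ term contributes $-A([\xi_1,\xi_2])e$; together with $\sigma\circ\nabla=\mathrm{id}$ these three terms assemble into $(dA)(\xi_1,\xi_2)e$, giving $F_{\nabla+A} = F_\nabla + dA$. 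For invariance under an isomorphism $\psi:\AA\to\AA'$ of log Picard algebroids, I would note that $\psi$ intertwines the brackets, the anchors ($\sigma'\circ\psi = \sigma$), and the central sections ($\psi(e) = e'$); applying $\psi$ to~\eqref{curvdef} and using these compatibilities shows $[\nabla'\xi_1,\nabla'\xi_2] - \nabla'[\xi_1,\xi_2] = F_\nabla(\xi_1,\xi_2)e'$ where $\nabla' = \psi\circ\nabla$, so $F_{\psi\circ\nabla} = F_\nabla$.

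Finally, for the Baer sum: by construction $\nabla\boxplus\nabla'$ is the connection on $\AA\boxplus\AA'$ induced by $\xi\mapsto [(\nabla\xi,\nabla'\xi)]$, the anchor is $\sigma+\sigma'$ restricted to the diagonal, the bracket is componentwise, and the central section is $[(e,e')]$. Computing $[(\nabla\xi_1,\nabla'\xi_1),(\nabla\xi_2,\nabla'\xi_2)] - (\nabla[\xi_1,\xi_2],\nabla'[\xi_1,\xi_2])$ componentwise gives $\bigl(F_\nabla(\xi_1,\xi_2)e,\ F_{\nabla'}(\xi_1,\xi_2)e'\bigr)$, and passing to the quotient by $\langle(e,-e')\rangle$ this equals $\bigl(F_\nabla(\xi_1,\xi_2) + F_{\nabla'}(\xi_1,\xi_2)\bigr)[(e,e')]$, so $F_{\nabla+\nabla'} = F_\nabla + F_{\nabla'}$. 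All computations are local and identical to the classical ($D=\emptyset$) case carried out in~\cite{BeilinsonBernstein93}, the only new point being that they take place in the locally free sheaves $\Omega^\bullet_X(\log D)$ rather than $\Omega^\bullet_X$.
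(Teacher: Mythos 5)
Your proposal is correct and follows essentially the same route as the paper: closedness via the Jacobi identity combined with the identity $\xi(f)e=[\nabla\xi,fe]$ and the Cartan formula (the paper starts from $dF_\nabla$ and reduces to Jacobi, you start from Jacobi and recover $dF_\nabla$ as the coefficient of $e$ --- the same computation read in opposite directions), and the remaining three assertions by the same direct expansions. No gaps.
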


\begin{proof}
The Leibniz rule~\ref{leibniz} together with the centrality of $e$ imply that $\xi(f) = [\nabla \xi, fe]$ for $\xi\in \TT_{X}(-\log D)$, $f\in\OO_{X}$.  Using this to compute the exterior derivative, we obtain
\begin{equation*}
\begin{aligned}
dF_{\nabla}(\xi_{1},\xi_{2},\xi_{3}) &=[\nabla \xi_{1},[\nabla \xi_{2},\nabla \xi_{3}] - \nabla [\xi_{2},\xi_{3}]] +\cdots \\
&\qquad -([\nabla[\xi_{1},\xi_{2}], \nabla \xi_{3}] - \nabla[[\xi_{1},\xi_{2}],\xi_{3}]) + \cdots,
\end{aligned}
\end{equation*}
where the omitted terms are cyclic permutations of the preceding terms. When taken with their cyclic permutations, the second and third displayed summands cancel; similarly, the first and fourth summands vanish using the Jacobi identity.  For property~\eqref{torscurv}, if $\nabla' = \nabla + A$, then by~\eqref{curvdef} we have 
\begin{equation*}
F_{\nabla'}(\xi_1,\xi_2)  = F_{\nabla}(\xi_1,\xi_2) + [\nabla\xi_1,A\xi_2] + [A\xi_1,\nabla\xi_2] - A[\xi_1,\xi_2] = (F_\nabla + dA)(\xi_1,\xi_2),
\end{equation*}
as required.  For the remaining statements, note firstly that an isomorphism of log Picard algebroids is a bracket-preserving isomorphism $\psi:\AA\to\AA'$ of extensions, giving $F_{\psi\circ\nabla}=F_\nabla$ immediately.  The final assertion follows immediately from the fact that the Lie bracket on the Baer sum $\AA\boxplus\AA'$ is defined componentwise. 
\end{proof}

\begin{Proposition}\label{splitting proposition}
The splitting determined by the connection $\nabla$ takes the Lie bracket on $\AA$ to the bracket on $\TT_{X}(-\log D)\oplus \OO_{X}$ given by
\begin{equation}	\label{eq: split ses} 
[\xi_{1}+f_{1},\xi_{2}+f_{2}]=[\xi_{1},\xi_{2}]+\xi_{1}(f_{2})-\xi_{2}(f_{1})+F_{\nabla}(\xi_{1},\xi_{2}),
\end{equation}
where $\xi_{1},\xi_{2}\in \tlogd$ and $f_{1},f_{2}\in \OO_{X}$. 
%Conversely, any globally split log Picard algebroid on $(X,D)$ is isomorphic to $\TT_{X}(-\log D)\oplus \OO_{X}$ equipped with the above bracket, for some closed logarithmic 
\end{Proposition}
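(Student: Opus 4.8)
The plan is to use the connection $\nabla$ to identify $\AA$ with $\tlogd\oplus\OO_X$ via the $\OO_X$-module isomorphism $(\xi,f)\mapsto\nabla\xi+fe$, which is well-defined precisely because $\nabla$ splits the exact sequence~\eqref{exlalg}, and then to transport the Lie bracket of $\AA$ through this identification by a direct computation.

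First I would expand $[\nabla\xi_{1}+f_{1}e,\nabla\xi_{2}+f_{2}e]$ by $\CC$-bilinearity into the four terms $[\nabla\xi_{1},\nabla\xi_{2}]$, $[\nabla\xi_{1},f_{2}e]$, $[f_{1}e,\nabla\xi_{2}]$, and $[f_{1}e,f_{2}e]$. The first term is governed by the definition~\eqref{curvdef} of curvature, which gives $\nabla[\xi_{1},\xi_{2}]+F_{\nabla}(\xi_{1},\xi_{2})e$ once one also uses that $\nabla$ is a splitting, i.e. $\sigma\circ\nabla=\mathrm{id}$. For the two cross terms I would invoke the Leibniz rule~\eqref{leibniz} together with the centrality of $e$: since $[\nabla\xi_{1},e]=0$, we get $[\nabla\xi_{1},f_{2}e]=\sigma(\nabla\xi_{1})(f_{2})\,e=\xi_{1}(f_{2})\,e$, and antisymmetry of the bracket yields $[f_{1}e,\nabla\xi_{2}]=-\xi_{2}(f_{1})\,e$. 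For the last term, using $\sigma(e)=0$ and $[e,e]=0$, the Leibniz rule forces $[f_{1}e,f_{2}e]=0$. Collecting these contributions produces $\nabla[\xi_{1},\xi_{2}]+\bigl(\xi_{1}(f_{2})-\xi_{2}(f_{1})+F_{\nabla}(\xi_{1},\xi_{2})\bigr)e$, which is exactly the right-hand side of~\eqref{eq: split ses}.

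There is no genuine obstacle here; the computation is routine. The only points requiring care are the systematic use of the centrality of $e$ — which is what makes every bracket involving an $e$-term collapse to a single Leibniz contribution — and bookkeeping of signs coming from antisymmetry. It is worth remarking that the bracket~\eqref{eq: split ses} so obtained satisfies the Jacobi identity if and only if $F_{\nabla}$ is closed, which is consistent with Lemma~\ref{curvtr} and provides a useful sanity check on the formula.
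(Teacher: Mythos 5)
Your proof is correct and follows essentially the same route as the paper: expand $[\nabla\xi_{1}+f_{1}e,\nabla\xi_{2}+f_{2}e]$ by bilinearity, apply the curvature definition~\eqref{curvdef} to the leading term, and use the Leibniz rule~\eqref{leibniz} with the centrality of $e$ to evaluate the remaining terms. The paper records only the resulting two-line identity, so your version simply makes the same computation explicit.
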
 

\begin{proof}
The claim follows immediately from the identity
\begin{eqnarray*}
[\nabla\xi_{1}+f_{1}e,\nabla\xi_{2}+f_{2}e] & = & 
[\nabla(\xi_{1}),\nabla(\xi_{2})]+[\nabla(\xi_{1}),f_{2}e]-[\nabla(\xi_{2}),f_{1}e]+[f_{1}e,f_{2}e]\\
 & = & \nabla[\xi_{1},\xi_{2}] + F_{\nabla}(\xi_{1},\xi_{2})e + \xi_{1}(f_{2})e - \xi_{2}(f_{1})e.
\end{eqnarray*}
\end{proof}

\begin{Definition} \label{twist definition}
Let $B\in\Omega_{X}^{2,\mathrm{cl}}(\log D)$ be a closed logarithmic 2-form and $\AA$ be a log Picard algebroid on $(X,D)$. We define the \textbf{\textit{twist}} of $\AA$ to be the log Picard algebroid $(\AA,[\cdot,\cdot]_{B},\sigma,e)$ where $[\cdot,\cdot]_{B}$ is given as
\[
[a_1,a_2]_B = [a_1,a_2] + B(\sigma(a_1),\sigma(a_2))e,
\]
for $a_{1},a_{2}\in\AA$. We denote this algebroid by $\AA_B$.
\end{Definition}

%\begin{Definition} \label{twist definition}
%Let $\AA$ be a log Picard algebroid on $(X,D)$ and $B\in\Omega_{X}^{2,\mathrm{cl}}(\log D)$ be a closed logarithmic 2-form. We define the \textbf{\textit{twist}} of $\AA$ to be the log Picard algebroid $\AA_B$ where the extension class of $\AA_B$ is the same as $\AA$ but the Lie bracket is given by
%\[
%[a_1,a_2]_B = [a_1,a_2] + B(\sigma(a_1),\sigma(a_2))e
%\]
%\end{Definition}

\begin{Example}[Split log Picard algebroid]
\label{global split log PA}
Let $B\in\Omega_{X}^{2,\mathrm{cl}}(\log D)$ be a closed logarithmic 2-form. We can twist the trivial extension $\tlogd\oplus\OO_{X}$ by $B$
\[
[\xi_{1}+f_{1},\xi_{2}+f_{2}]_B=[\xi_{1},\xi_{2}]+\xi_{1}(f_{2})-\xi_{2}(f_{1})+B(\xi_{1},\xi_{2}).
\]
Hence, we obtain a log Picard algebroid admitting a global connection with curvature $B$. 
%This defines a log Picard algebroid admitting a global connection with curvature $B$.  An automorphism of the trivial extension given by $\Phi:\xi+f\mapsto \xi + f + A\xi$ for $A\in H^0(X,\Omega^1_X(\log D))$ takes $[\cdot,\cdot]_B$ to $[\cdot,\cdot]_{B'}$ if and only if $B'-B=dA$. But by~\cite[Proposition 1.3.2]{Deligne71}, any global logarithmic form on the pair $(X,D)$ is exact if and only if it vanishes, hence we see that $\AA_B\cong \AA_{B'}$ if and only if $B'=B$. Therefore we obtain a universal family of split log Picard algebroids over $H^0(X, \Omega^{2,\mathrm{cl}}_X(\log D))$ by this construction.
\end{Example}

\subsection{Mixed Hodge theory}\label{mht}
In this section, we recall how the logarithmic de Rham complex of $(X,D)$ is used to describe the de Rham cohomology of the complement $U = X\backslash D$ and its weight filtration. By \cite[Proposition 3.1.8]{Deligne71}, the restriction of logarithmic forms to 
usual forms on $U$  defines a quasi-isomorphism of complexes
\begin{equation}	\label{eq:quasi-iso from log to complement}
\Omega_{X}^{\bullet}(\log D)\hookrightarrow Rj_{*}\Omega_{U}^{\bullet},
\end{equation}
where $j:U\hookrightarrow X$ is the inclusion. The log de Rham complex therefore computes the cohomology of $U$:
\[
H^{k}(U,\mathbb{C})=\mathbb{H}^{k}(X,\Omega_{X}^{\bullet}(\log D)).
\]
The $\textit{\textbf{Hodge filtration}}$ on $\Omega_{X}^{\bullet}(\log D)$ is defined as follows:
\[
F^{p}\Omega^{\bullet}_{X}(\log D):=\Omega_{X}^{\geq p}(\log D),
\]
where $\Omega_{X}^{\geq p}(\log D)$ is the truncated complex
\[
\Omega_{X}^{\geq p}(\log D):=\dots\rightarrow0\rightarrow\Omega_{X}^{p}(\log D)\rightarrow\dots\rightarrow\Omega_{X}^{n}(\log D).
\]
This provides a Hodge filtration on $H^{k}(U,\mathbb{C})$, as follows:
\[
F^{p}H^{k}(U,\mathbb{C}):=\mathrm{im}(\mathbb{H}^{k}(X,F^{p}\Omega^{\bullet}_{X}(\log D))\rightarrow H^{k}(U,\mathbb{C})).
\]
We now briefly review Deligne's theory of weight filtrations on $H^{k}(U,\mathbb{C})$, following the treatment in \cite{Deligne71}. The $\textit{\textbf{weight filtration}}$ on $\Omega_{X}^{\bullet}(\log D)$ is defined as follows:
\[
W_{m}\Omega_{X}^{k}(\log D):=\begin{cases}
0 & m<0\\
\Omega_{X}^{k}(\log D) & m\geq k\\
\Omega_{X}^{k-m}\wedge\Omega_{X}^{m}(\log D) & 0\leq m\leq k,
\end{cases}
\]
inducing a corresponding weight filtration on $H^{k}(U,\mathbb{C})$, namely
\begin{equation}	\label{eq:weights}
W_{m}H^{k}(U,\mathbb{C}):=\mathrm{im}(\mathbb{H}^{k}(X,W_{m-k}\Omega^{\bullet}_{X}(\log D))\rightarrow H^{k}(U,\mathbb{C})).
\end{equation} 
Moreover, we can describe the associated graded pieces of the weight filtration, $\mathrm{Gr}_{m}^{W}(\Omega_{X}^{\bullet}(\log D))$, using the Poincar\'e residue isomorphism \cite[3.1.5.2]{Deligne71}.
\begin{equation}	\label{Poincare residue}
\mathrm{Gr}_{m}^{W}(\Omega_{X}^{\bullet}(\log D))\overset{\cong}{\rightarrow}j_{m_{*}}\Omega_{D^{(m)}}^{\bullet}[-m],
\end{equation}
where $D^{(m)}$ denotes the disjoint union of all $m$-fold intersections of different irreducible components of the divisor $D$ with $D^{0}=X$ and $j_{m}:D^{(m)}\hookrightarrow X$ is the natural inclusion. There is a spectral sequence associated to the weight filtration $W_m$ on $\Omega_{X}^{\bullet}(\log D)$
\begin{equation*}
\begin{split}
E_{1}^{-m,k+m} & =\HH^{(k+m)+(-m)}(X,\mathrm{Gr}^W_{-(-m)}(\Omega_{X}^{\bullet}(\log D))) \\
& = \mathbb{H}^{k}(X,\mathrm{Gr}_{m}^{W}(\Omega_{X}^{\bullet}(\log D)))\Rightarrow H^{k}(U,\mathbb{C})
\end{split}
\end{equation*}
which we call the weight spectral sequence. Using \eqref{Poincare residue}, we can simplify and rewrite the weight spectral sequence as
%H^{(k+m)+2(-m)}(D^{(-(-m))},\CC)
\begin{equation}	\label{eq: weight spectral sequence}
E_{1}^{-m,k+m}=H^{k-m}(D^{(m)},\mathbb{C})\Rightarrow H^{k}(U,\mathbb{C}).
\end{equation}
Moreover, the differential $d_1$ of the weight spectral sequence is given by
\begin{equation} \label{eq:d1 weight spectral sequence}
d_1:E_{1}^{-m,k+m} = H^{k-m}(D^{(m)},\mathbb{C})\rightarrow H^{k-m+2}(D^{(m-1)},\CC) = E_{1}^{-m+1,k+m}.
\end{equation}
By \cite[Corollary 3.2.13]{Deligne71}, the weight spectral sequence degenerates at $E_{2}$. The associated graded objects of the weight filtration are exactly the $E_{2}$-terms
\[
E_{2}^{-m,k+m}=\mathrm{Gr}_{k+m}^{W}(H^{k}(U,\mathbb{C})).
\]
We now provide an alternative method to define the weight filtration on $H^{k}(U,\mathbb{C})$. This will be useful when we generalize to weight filtrations on the cohomology with integer coefficients in Section \ref{sec 3}. First, we define the $\textit{\textbf{canonical filtration}}$ $\sigma_{\leq p}$ of a complex $K^{\bullet}$ as
\[
\sigma_{\leq p}K^{\bullet}:=\dots\rightarrow K^{p-1}\rightarrow\ker(d)\rightarrow0\rightarrow0\rightarrow\dots
\]
where the graded pieces $\mathrm{Gr}_{p}^{\sigma}K^{\bullet}$ are given as
\[
\mathrm{Gr}_{p}^{\sigma}K^{\bullet}=H^p(K^\bullet)[-p].
\]
By \cite[Lemma 4.9]{PetersSteenbrink08}, there is a filtered quasi-isomorphism of complexes
\[
(\Omega_{X}^{\bullet}(\log D),\sigma)\rightarrow(\Omega_{X}^{\bullet}(\log D),W).
\]
Furthermore, since the complexes $Rj_{*}\mathbb{C}_{U}$ and $\Omega_{X}^{\bullet}(\log D)$ are quasi-isomorphic from \eqref{eq:quasi-iso from log to complement}, we can alternatively write the weight filtration on $H^{k}(U,\mathbb{C})$ in \eqref{eq:weights} as
\[
W_{m}H^{k}(U,\mathbb{C}):=\mathrm{im}(\mathbb{H}^{k}(\sigma_{\leq m-k}Rj_{*}\mathbb{C}_{U})\rightarrow H^{k}(U,\mathbb{C})).
\] 
We now explain how to recover the weight spectral sequence in \eqref{eq: weight spectral sequence}. The Leray spectral sequence associated to the canonical filtration $\sigma$ on the complex $Rj_{*}\mathbb{C}_{U}$ is given by
\begin{equation}	\label{eq:E2 Leray}
E_{2}^{k-m,m}=H^{k-m}(R^{m}j_{*}\mathbb{C}_{U})\Rightarrow H^{k}(U,\mathbb{C})
\end{equation}
Since there is a filtered quasi-isomorphism between the complexes $(Rj_{*}\mathbb{C}_{U},\sigma)$ and $(\Omega_{X}^{\bullet}(\log D),W)$, we obtain the following isomorphism between the graded pieces
\[
\mathrm{Gr}^{\sigma}_{m}Rj_{*}\mathbb{C}_{U}\cong\mathrm{Gr}^{W}_{m}\Omega_{X}^{\bullet}(\log D)
\]
which itself yields the following
\begin{equation} \label{eq:R^k interpretation for C}
R^{m}j_{*}\mathbb{C}_{U}\cong j_{m_{*}}\Omega_{D^{(m)}}^{\bullet}
\end{equation}
Hence, we can use this to re-write the $E_{2}$-terms of the Leray spectral sequence in \eqref{eq:E2 Leray} as
\[
E_{2}^{k-m,m}=H^{k-m}(D^{(m)},\mathbb{C})\Rightarrow H^{k}(U,\mathbb{C})
\]
which is exactly the weight spectral sequence as described in \eqref{eq: weight spectral sequence}.
\begin{Remark}	\label{smooth two step weight remark}
For the special case where $D$ is a smooth irreducible divisor on $X$, the weight filtration on the cohomology of the complement $U$ is a two-step filtration
\[
H^{k}(U,\mathbb{C})=W_{k+1}\supset W_{k}\supset0
\]
which can be described by the complex Gysin sequence
\[
\dots\rightarrow H^{k}(X,\mathbb{C})\rightarrow H^{k}(U,\mathbb{C})\rightarrow H^{k-1}(D,\mathbb{C})\rightarrow H^{k+1}(X,\mathbb{C})\rightarrow\dots
\]
The $W_{k}$-piece is given by
\[
W_{k}H^{k}(U,\mathbb{C})=\mathrm{im}(H^{k}(X,\mathbb{C})\rightarrow H^{k}(U,\mathbb{C})),
\]
and the associated graded piece $\mathrm{Gr}_{k+1}^{W}H^{k}(U,\mathbb{C})$ is given by
\[
\mathrm{Gr}_{k+1}^{W}H^{k}(U,\mathbb{C})=\ker(H^{k-1}(D,\mathbb{C})\rightarrow H^{k+1}(X,\mathbb{C})).
\]
\end{Remark}

\subsection{Classification of log Picard algebroids}
We now classify log Picard algebroids using the Hodge-theoretic techniques introduced above.

\begin{Theorem} \label{classification theorem}
Log Picard algebroids on $(X,D)$ are classified by the hypercohomology group:
\[
\mathbb{H}^{2}(F^{1}\Omega^{\bullet}_{X}(\log D)), 
\]
with addition deriving from the Baer sum operation \eqref{monoidal1}.
\end{Theorem}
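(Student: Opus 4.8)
The plan is to run the standard Čech--hypercohomology argument for classifying extensions, using the identification, supplied by Proposition~\ref{splitting proposition} and Example~\ref{global split log PA}, of a log Picard algebroid equipped with a connection with a closed logarithmic $2$-form. First I would fix a finite Stein cover $\mathcal{U}=\{U_i\}$ of $X$ --- for instance the analytification of a finite affine cover, each member of which is Stein. Since every $\Omega^q_X(\log D)$ is coherent, such a $\mathcal{U}$ is acyclic for the terms of $F^1\Omega^\bullet_X(\log D)$, so the associated Čech total complex computes $\mathbb{H}^2(F^1\Omega^\bullet_X(\log D))$. Moreover, on each $U_i$ the short exact sequence~\eqref{exlalg} of locally free sheaves splits, since $\mathrm{Ext}^1$ of coherent sheaves vanishes on a Stein open, so $\AA$ admits a connection $\nabla_i$ over $U_i$.

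Given $\AA$, set $F_i=F_{\nabla_i}\in\Gamma(U_i,\Omega^{2,\mathrm{cl}}_X(\log D))$ and $A_{ij}=\nabla_j-\nabla_i\in\Gamma(U_{ij},\Omega^1_X(\log D))$. By Lemma~\ref{curvtr} one has $dF_i=0$, $dA_{ij}=F_j-F_i$ on $U_{ij}$, and $A_{ij}-A_{ik}+A_{jk}=0$ on $U_{ijk}$; that is, $(\{F_i\},\{A_{ij}\})$ is a $2$-cocycle in the Čech total complex of $F^1\Omega^\bullet_X(\log D)$. Replacing $\nabla_i$ by $\nabla_i+B_i$ with $B_i\in\Gamma(U_i,\Omega^1_X(\log D))$ changes this cocycle by the total coboundary of the $0$-cochain $\{B_i\}$, and if $\psi\colon\AA\to\AA'$ is an isomorphism of log Picard algebroids, then the connections $\psi\circ\nabla_i$ produce the very same cocycle, because $\psi$ restricts to the identity on $\OO_X$ and on $\TT_X(-\log D)$ and hence preserves curvatures (last part of Lemma~\ref{curvtr}) and transition forms. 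Thus $\AA\mapsto[(\{F_i\},\{A_{ij}\})]$ is a well-defined map from isomorphism classes of log Picard algebroids to $\mathbb{H}^2(F^1\Omega^\bullet_X(\log D))$.

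For the inverse, given a $2$-cocycle $(\{F_i\},\{A_{ij}\})$ I would take over each $U_i$ the split log Picard algebroid of Example~\ref{global split log PA} with curvature $F_i$, and glue the restrictions to $U_{ij}$ by the $\OO$-module map $\xi+f\mapsto\xi+\bigl(f+A_{ij}(\xi)\bigr)$. The relation $dA_{ij}=F_j-F_i$ makes each such map an isomorphism of log Picard algebroids --- this is exactly the local incarnation of~\eqref{torscurv} --- while $A_{ij}-A_{ik}+A_{jk}=0$ gives the cocycle condition on triple overlaps, so the local algebroids glue to a global log Picard algebroid; cohomologous cocycles yield isomorphic algebroids via the same formula applied to the witnessing $0$-cochain. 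One then checks this is a two-sided inverse of the map of the previous paragraph. Finally, additivity is immediate from Lemma~\ref{curvtr}: if $\nabla_i,\nabla'_i$ are the chosen connections for $\AA,\AA'$, then $\nabla_i+\nabla'_i$ is a connection on $\AA\boxplus\AA'$ with curvature $F_{\nabla_i}+F_{\nabla'_i}$ and transition form $A_{ij}+A'_{ij}$, so the cocycle of the Baer sum is the sum of the cocycles, and the bijection is an isomorphism of abelian groups.

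Every computation here is soft. The only points demanding care are the choice of cover --- it must simultaneously compute the hypercohomology and be fine enough for local connections to exist, both of which hold automatically for a Stein cover --- and the sign conventions in the Čech--de Rham total complex, which I expect to be the most error-prone part but which is purely mechanical once the bidegree and differential conventions are pinned down.
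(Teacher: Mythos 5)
Your proof is correct and follows essentially the same route as the paper's: choose local connections on an acyclic (affine/Stein) cover, extract the \v{C}ech--de Rham $2$-cocycle $(\{A_{ij}\},\{F_i\})$, check well-definedness under change of connection and under isomorphism via Lemma~\ref{curvtr}, construct the inverse by gluing the split local algebroids of Example~\ref{global split log PA} along $\xi+f\mapsto \xi+f+A_{ij}(\xi)$, and read off additivity from the componentwise bracket on the Baer sum. The only difference is that you make explicit the acyclicity of the cover and the splitting of~\eqref{exlalg} on Stein opens, points the paper leaves implicit.
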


\begin{proof}
Let $\mathcal{A}$ be a log Picard algebroid on $(X,D)$ and take an open affine cover $\{U_{i}\}$ of $X$. Then over each $U_i$ we may choose a connection $\nabla_i$, which has curvature $B_i\in \Omega^{2,\mathrm{cl}}(U_i,\log D)$.  On the double overlap $U_i\cap U_j$, we have by Lemma~\ref{curvtr} that 
\[
B_{i}-B_{j}=dA_{ij},
\]
where $A_{ij} = \nabla_i - \nabla_j\in \Omega^1(U_i\cap U_j,\log D)$.  Since $A_{ij}+A_{jk}+A_{ki}=0$ on triple overlaps, we have a \v{C}ech-de Rham cocycle $(A_{ij}, B_i)$.  Modifying our initial choice of local connections shifts this cocycle by a coboundary, so that we have a well-defined map from log Picard algebroids to the hypercohomology group $\mathbb{H}^{2}(F^{1}\Omega_{X}^{\bullet}(\log D))$. An isomorphism $\psi:\AA\to \AA'$ of log Picard algebroids induces an isomorphism of sheaves of connections which preserves the curvature (Lemma~\ref{curvtr}), so that $\AA$ and $\AA'$ give rise to cohomologous cocycles by the above argument.  

To show that this classifying map is surjective, note that any such cocycle $(A_{ij},B_i)$ may be used to construct a log Picard algebroid as follows: on each $U_i$ we define local log Picard algebroids $\AA_i$ to be trivial extensions $\AA_i = \mathcal{O}_{U_{i}}\oplus\TT_{U_{i}}(-\log D)$ equipped with brackets as in Proposition~\ref{splitting proposition}:
\[
[f_{1}+\xi_{1},f_{2}+\xi_{2}]_i=[\xi_{1},\xi_{2}]+\xi_{1}(f_{2})-\xi_{2}(f_{1})+B_{i}(\xi_{1},\xi_{2}).
\]
We then glue $\AA_i$ to $\AA_j$ over $U_i\cap U_j$ using the isomorphism of log Picard algebroids 
\[
\Phi_{ij}:f+ \xi \mapsto f+ \xi + A_{ij}(\xi),
\]
which satisfies $\Phi_{ki}\Phi_{jk}\Phi_{ij}=1$, yielding a global log Picard algebroid and so proving surjectivity. To obtain the correspondence between Baer sum and addition in cohomology, note that by choosing local connections $\nabla_i, \nabla'_i$  on $\AA, \AA'$, the sum $\nabla_i+\nabla_i'$ defines local connections for the Baer sum $\AA\boxplus\AA'$ and by Lemma~\ref{curvtr} the corresponding cocycle for $\nabla_i+\nabla_i'$ is the sum of the cocycles for the summands, as required.
\end{proof}

\begin{Example} [Family of split log Picard algebroids]
Given two closed logarithmic 2-forms $B,B'\in\Omega_{X}^{2,\mathrm{cl}}(\log D)$, we can twist the trivial extension $\tlogd\oplus\OO_X$ by $B$ and $B'$ as shown in Example~\ref{global split log PA} to obtain split log Picard algebroids $\AA_B$ and $\AA_{B'}$ respectively. An automorphism of the trivial extension given by $\Phi:\xi+f\mapsto \xi + f + A(\xi)$ for $A\in H^0(\Omega^1_X(\log D))$ takes $[\cdot,\cdot]_B$ to $[\cdot,\cdot]_{B'}$ if and only if $B'-B=dA$. But by~\cite[Proposition 1.3.2]{Deligne71}, any global closed logarithmic form on $(X,D)$ is exact if and only if it vanishes, hence we see that $\AA_B\cong \AA_{B'}$ if and only if $B'=B$. Therefore we obtain a universal family of split log Picard algebroids over $H^0(\Omega^{2,\mathrm{cl}}_X(\log D))$ by this construction.
\end{Example}

\begin{Theorem}	\label{log PA classification theorem}
Any extension of $\mathcal{T}_{X}(-\log D)$ by $\mathcal{O}_{X}$ admits the structure of a log Picard algebroid, and if $\AA,\AA'$ are two log Picard algebroids with isomorphic underlying extensions, then $\AA'$ is isomorphic to $\AA$ up to a twist by a closed logarithmic 2-form as in Definition~\ref{twist definition}.
\end{Theorem}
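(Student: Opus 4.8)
The plan is to deduce both claims from the classification of Theorem~\ref{classification theorem} together with the stupid truncation of the Hodge‑filtered logarithmic de Rham complex. Abbreviate $F^{1}:=F^{1}\Omega^{\bullet}_{X}(\log D)=\Omega^{\geq 1}_{X}(\log D)$ and $F^{2}:=\Omega^{\geq 2}_{X}(\log D)$. There is a short exact sequence of complexes
\[
0\to F^{2}\to F^{1}\to \Omega^{1}_{X}(\log D)[-1]\to 0,
\]
and I would extract from its hypercohomology long exact sequence the piece
\[
\mathbb{H}^{2}(X,F^{2})\xrightarrow{\ a\ }\mathbb{H}^{2}(X,F^{1})\xrightarrow{\ c\ }H^{1}(X,\Omega^{1}_{X}(\log D))\xrightarrow{\ \delta\ }\mathbb{H}^{3}(X,F^{2}).
\]
By Theorem~\ref{classification theorem}, $\mathbb{H}^{2}(X,F^{1})$ is the group of log Picard algebroids; unwinding the \v{C}ech--de Rham description of that theorem, the transition cochain $A_{ij}=\nabla_i-\nabla_j$ is exactly the cocycle of the underlying extension~\eqref{exlalg} in $H^{1}(X,\Omega^{1}_{X}(\log D))=\mathrm{Ext}^{1}_{\OO_{X}}(\tlogd,\OO_{X})$, so $c$ is precisely the map ``forget the bracket, remember the extension''. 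Hence the first assertion of the theorem is equivalent to surjectivity of $c$, i.e.\ to $\delta=0$, and the second assertion will follow once $\ker c=\mathrm{im}\,a$ is identified with the family of twists of a given algebroid.

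The vanishing of $\delta$ is the heart of the matter, and it is here that one must use Deligne's degeneration at $E_{1}$ of the Hodge‑to‑de Rham spectral sequence of $\Omega^{\bullet}_{X}(\log D)$ \cite[Cor.~3.2.13]{Deligne71}: this degeneration is equivalent to the strictness statement that $\mathbb{H}^{k}(X,F^{p}\Omega^{\bullet}_{X}(\log D))\to H^{k}(U,\CC)$ is injective for all $k,p$. In particular $a\colon\mathbb{H}^{k}(X,F^{2})\to\mathbb{H}^{k}(X,F^{1})$ is injective for every $k$ (both groups embed compatibly in $H^{k}(U,\CC)$ via $F^{2}\subseteq F^{1}$), and injectivity of $a$ in degree $3$ forces $\delta=0$ by exactness. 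Concretely, $\delta=0$ says that for an extension presented over an affine cover $\{U_i\}$ by a cocycle $A_{ij}\in\Omega^{1}(U_i\cap U_j,\log D)$ one can always choose closed logarithmic $2$‑forms $B_i\in\Omega^{2,\mathrm{cl}}(U_i,\log D)$ with $B_i-B_j=dA_{ij}$; the split brackets of Example~\ref{global split log PA} with curvatures $B_i$, glued by the isomorphisms $\Phi_{ij}=\mathrm{id}+A_{ij}$ of Theorem~\ref{classification theorem}, then assemble into a log Picard algebroid realizing the given extension. The point worth stressing is that the log Poincar\'e lemma fails, so the $B_i$ cannot in general be taken to be $0$ (there need be no local flat connection); that suitable $B_i$ nonetheless exist is exactly Hodge‑theoretic strictness, and this is the step I expect to be the main obstacle.

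For the second assertion, the same $E_{1}$‑degeneration computes $\mathbb{H}^{2}(X,F^{2})=H^{0}(X,\Omega^{2,\mathrm{cl}}_{X}(\log D))$: the degree‑$2$ part of $F^{2}$ involves only $\Omega^{2}_{X}(\log D)$, so only $\ker\bigl(d\colon H^{0}(\Omega^{2}_{X}(\log D))\to H^{0}(\Omega^{3}_{X}(\log D))\bigr)$ survives in $\mathbb{H}^{2}$. Under $a$, a global closed logarithmic $2$‑form $B$ maps to the class of the \v{C}ech--de Rham cocycle $(0,B|_{U_i})$, and adding this to the class $(A_{ij},B_i)$ of a log Picard algebroid $\AA$ produces $(A_{ij},B_i+B)$, which by Definition~\ref{twist definition} — together with the fact that twisting by $B$ adds $B$ to the curvature of any connection — is the class of $\AA_{B}$. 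Thus $\ker c=\mathrm{im}\,a$ is precisely the set of classes $[\AA]-[\AA_{B}]$ with $B\in H^{0}(X,\Omega^{2,\mathrm{cl}}_{X}(\log D))$. If $\AA,\AA'$ have isomorphic underlying extensions then $c[\AA]=c[\AA']$, so $[\AA']-[\AA]\in\ker c$ gives $[\AA']=[\AA_{B}]$ for a closed logarithmic $2$‑form $B$, hence $\AA'\cong\AA_{B}$ by Theorem~\ref{classification theorem}. (One can also argue this last point without the long exact sequence: transport the bracket of $\AA'$ along the given isomorphism of extensions so that $\AA$ and $\AA'$ share the same underlying extension, choose a single affine cover with a single family of local splittings, and observe that the two curvature families satisfy $B_i-B_j=dA_{ij}=B_i'-B_j'$, whence $\{B_i-B_i'\}$ patches to a global closed logarithmic $2$‑form $B$ with $\AA'\cong\AA_{B}$ by Lemma~\ref{curvtr}.)
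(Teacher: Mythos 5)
Your proposal is correct and follows essentially the same route as the paper: both extract the exact sequence $0\to\mathbb{H}^{2}(F^{2})\to\mathbb{H}^{2}(F^{1})\to H^{1}(\Omega^{1}_{X}(\log D))\to 0$ from the truncation of the Hodge filtration, use Deligne's $E_{1}$-degeneration of the logarithmic Hodge--de Rham spectral sequence to get surjectivity of the forgetful map, and identify $\mathbb{H}^{2}(F^{2})$ with $H^{0}(\Omega^{2,\mathrm{cl}}_{X}(\log D))$ acting by twists. Your additional unwinding of the connecting map $\delta$ and the alternative direct \v{C}ech argument for the uniqueness-up-to-twist statement are sound elaborations of what the paper leaves implicit.
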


\begin{proof}
Given a log Picard algebroid described by the cocycle $\{A_{ij},B_{i}\}$, the cocycle $\{A_{ij}\}$ represents the underlying extension class of the log Picard algebroid. The forgetful map from $\{A_{ij},B_{i}\}$ to $\{A_{ij}\}$ induces the map in cohomology
\[
\Phi:\mathbb{H}^{2}(F^{1}\Omega^{\bullet}_{X}(\log D))\rightarrow H^{1}(\Omega_{X}^{1}(\log D)).
\]
To prove our theorem, we will show that this map fits into the following short exact sequence
\[
0\rightarrow H^{0}(\Omega_{X}^{2,\mathrm{cl}}(\log D))\rightarrow\mathbb{H}^{2}(F^{1}\Omega^{\bullet}_{X}(\log D))\overset{\Phi}{\rightarrow}H^{1}(\Omega_{X}^{1}(\log D))\rightarrow0.
\]
The spectral sequence associated to the Hodge filtration on $\Omega^{\bullet}_{X}(\log D)$ is exactly the logarithmic Hodge-de Rham spectral sequence, and by \cite[Corollary 3.2.13]{Deligne71}, we know that this spectral sequence degenerates at $E_{1}$. In addition, since the log de Rham complex $\Omega_{X}^{\bullet}(\log D)$ with the Hodge filtration is a biregular filtered complex and so by \cite[Proposition 1.3.2]{Deligne71}, the following sequence in hypercohomology is short exact
\begin{equation}	\label{eq: F2 short exact sequence}
0\rightarrow\mathbb{H}^{2}(F^{2}\Omega_{X}^{\bullet}(\log D))\rightarrow\mathbb{H}^{2}(F^{1}\Omega_{X}^{\bullet}(\log D))\rightarrow H^{1}(\Omega_{X}^{1}(\log D))\rightarrow0
\end{equation}
The group $\mathbb{H}^{2}(F^{2}\Omega_{X}^{\bullet}(\log D))$ can be identified with $H^{0}(\Omega_{X}^{2,\mathrm{cl}}(\log D))$ which completes the proof.
\end{proof}

\begin{Definition}	\label{loc trivial PA def}
Let $\mathcal{A}$ be a log Picard algebroid on $(X,D)$. We say that $\mathcal{A}$ is \textbf{\textit{locally trivial}} if a flat connection (a splitting of $\mathcal{A}$ with zero curvature) exists in a neighbourhood of every point. 
\end{Definition}

\begin{Proposition}	\label{loc trivial PA cohomology}
Locally trivial log Picard algebroids on $(X,D)$ are classified by the subgroup
\[
H^{1}(\Omega_{X}^{1,\mathrm{cl}}(\log D))\subset\mathbb{H}^{2}(F^{1}\Omega^{\bullet}_{X}(\log D)).
\]
If $D$ is smooth, the above inclusion is an equality, so that all log Picard algebroids on $(X,D)$ are locally trivial. 
\end{Proposition}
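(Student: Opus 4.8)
The plan is to translate "locally trivial" directly into a condition on the Čech–de Rham cocycle $(A_{ij}, B_i)$ built in the proof of Theorem~\ref{classification theorem}, and then identify the resulting subgroup. First I would observe that $\AA$ is locally trivial if and only if the open affine cover $\{U_i\}$ can be chosen so that each local connection $\nabla_i$ is flat, i.e.\ $B_i = 0$. Indeed, if a flat connection exists near every point, then after refining the cover we may take the $\nabla_i$ to be flat; conversely, if the classifying cocycle is represented by $(A_{ij}, 0)$ for some cover, then each $\AA|_{U_i}$ carries a flat connection. With $B_i = 0$, the cocycle condition $B_i - B_j = dA_{ij}$ forces $dA_{ij} = 0$, so the transition 1-forms $A_{ij}$ are closed, and $(A_{ij})$ is a Čech 1-cocycle for the sheaf $\Omega^{1,\mathrm{cl}}_X(\log D)$. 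This gives a map from locally trivial log Picard algebroids to $H^1(\Omega^{1,\mathrm{cl}}_X(\log D))$, and I would check it is a group isomorphism onto its image: the inverse construction is exactly the gluing in Theorem~\ref{classification theorem} with $B_i = 0$, which produces a log Picard algebroid that is manifestly locally trivial (the trivial extension on each $U_i$ has the flat connection $\nabla_i = 0$).

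Next I would exhibit the inclusion $H^1(\Omega^{1,\mathrm{cl}}_X(\log D)) \hookrightarrow \mathbb{H}^2(F^1\Omega^\bullet_X(\log D))$ and check that it is the one induced by the obvious inclusion of complexes $\Omega^{1,\mathrm{cl}}_X(\log D)[-1] \to F^1\Omega^\bullet_X(\log D) = (\Omega^1_X(\log D) \to \Omega^2_X(\log D) \to \cdots)$. One must confirm injectivity: a class in $H^1(\Omega^{1,\mathrm{cl}}_X(\log D))$ mapping to zero in $\mathbb{H}^2(F^1)$ means the cocycle $(A_{ij},0)$ is a coboundary, i.e.\ $A_{ij} = a_i - a_j$ with $da_i = 0$; but then $(A_{ij})$ is already a coboundary in the Čech complex of $\Omega^{1,\mathrm{cl}}_X(\log D)$, so the class was zero. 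Consistency with the earlier classification statements then identifies the image as precisely the classes of locally trivial algebroids.

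For the final assertion, suppose $D$ is smooth. I must show $H^1(\Omega^{1,\mathrm{cl}}_X(\log D)) = \mathbb{H}^2(F^1\Omega^\bullet_X(\log D))$, equivalently (by the short exact sequence~\eqref{eq: F2 short exact sequence} in the proof of Theorem~\ref{log PA classification theorem}) that every class is locally trivial, i.e.\ that the local Poincaré lemma holds for the truncated complex $\Omega^{\geq 1}_X(\log D)$ in degree $2$ when $D$ is smooth. Concretely: given a local closed logarithmic 2-form $B$, I need to write $B = dA$ for a local logarithmic 1-form $A$. Working in a polydisc with coordinates $(z_1,\dots,z_n)$ where $D = \{z_1 = 0\}$, one decomposes $B = \tfrac{dz_1}{z_1}\wedge\beta + \gamma$ with $\beta \in \Omega^1$, $\gamma\in\Omega^2$ smooth (no $dz_1/z_1$); closedness gives relations letting one apply the ordinary holomorphic Poincaré lemma to $\gamma$ and to $\beta$ (after handling the residue term $\beta|_{z_1=0}$, which is closed on the smooth divisor and hence exact locally there, then extended). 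This is the step I expect to be the main obstacle, since it is exactly where the Poincaré lemma for the log complex succeeds for a smooth divisor but fails for normal crossings — the failure in the crossings case being what makes the inclusion strict in general. I would either carry out this residue computation by hand or, more cleanly, cite the degeneration of the logarithmic Hodge–de Rham spectral sequence together with the fact that for smooth $D$ the weight filtration has only two steps (Remark~\ref{smooth two step weight remark}): concretely, that $R^m j_*\CC_U$ vanishes for $m \geq 2$ when $D$ is smooth forces the relevant hypercohomology sheaf $\mathcal{H}^2$ of $F^1\Omega^\bullet_X(\log D)$ to be computed entirely by closed $1$-forms modulo $d$ of functions, giving exactly $\Omega^{1,\mathrm{cl}}_X(\log D)$ locally as the degree-$1$ cohomology sheaf and no obstruction in degree $2$.
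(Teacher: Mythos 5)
Your proposal is correct and follows essentially the same route as the paper: flat local splittings reduce the classifying cocycle to a \v{C}ech $1$-cocycle for $\Omega_X^{1,\mathrm{cl}}(\log D)$, the inclusion of complexes gives the subgroup inclusion, and the logarithmic Poincar\'e lemma in degrees $\geq 2$ for smooth $D$ gives surjectivity. You supply more detail than the paper (the explicit injectivity check and the local exactness argument via $R^m j_*\CC_U=0$ for $m\geq 2$), but the strategy is identical.
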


\begin{proof}
As described in the proof of Theorem \ref{classification theorem}, a log Picard algebroid is classified by the cohomological data $\{ A_{ij},B_{i}\}$ where $B_{i}-B_{j}=dA_{ij}$. In the locally trivial case, local flat splittings may be chosen, in which case $B_{i}$ and $B_{j}$ are zero, yielding a \v{C}ech 1-cocycle $A_{ij}$ for $\Omega_{X}^{1,\mathrm{cl}}(\log D)$, as required.  The inclusion of complexes 
\[
\Omega_{X}^{1,\mathrm{cl}}(\log D)\hookrightarrow[\Omega_{X}^{1}(\log D)\rightarrow\Omega_{X}^{2,\mathrm{cl}}(\log D)]
\]
induces an inclusion on first cohomology groups, and since the Poincar\'e lemma for logarithmic forms holds in degree $\geq 2$ when $D$ is smooth, we obtain surjectivity in that case.
\end{proof}

%\begin{Corollary}	\label{loc trivial=all corollary}
%If $D$ is a smooth irreducible divisor in $X$, then all log Picard algebroids on $(X,D)$ are locally trivial. 
%\end{Corollary}
%
%\begin{proof}
%Since $D$ is a smooth irreducible divisor in $X$, the Poincar\'e lemma shows that all closed log 2-forms are locally exact. This means that the cokernel of the map
%\[
%H^{1}(\Omega_{X}^{1,\textrm{cl}}(\log D))\hookrightarrow\mathbb{H}^{2}(F^{1}\Omega^{\bullet}_{X}(\log D))
%\]
%is zero which yields the result. 
%\end{proof} 

%We will now show that locally trivial log Picard algebroids can be thought of as the geometric descriptions of Hodge-theoretic data on $H^{k}(U,\CC)$. 

A log Picard algebroid on $(X,D)$ restricts to a usual Picard algebroid on $U=X\setminus D$, and so determines a class in $H^1(\Omega^{1,\text{cl}}_U)$, which itself defines a class in $H^2(U,\CC)$.  We now give a classification of locally trivial log Picard algebroids in terms of this cohomology class.

%\begin{Lemma}	\label{W_k=all lemma}
%There is an isomorphism of hypercohomology groups
%\[
%\mathbb{H}^{k}(W_{k}\Omega_{X}^{\bullet}(\log D))\cong\mathbb{H}^{k}(\Omega_{X}^{\bullet}(\log D))
%\]
%\end{Lemma}
%
%\begin{proof}
%Consider the short exact sequence of complexes
%\[
%0\rightarrow W_{k}\Omega_{X}^{\bullet}(\log D)\rightarrow\Omega_{X}^{\bullet}(\log D)\rightarrow Q^{\bullet}\rightarrow0
%\]
%where $Q^{\bullet}$ is the cokernel of the injection from $W_{k}\Omega_{X}^{\bullet}(\log D)$ into $\Omega_{X}^{\bullet}(\log D)$. Since all of the $\leq k$-terms of $W_{k}\Omega_{X}^{\bullet}(\log D)$ are the same as $\Omega_{X}^{\bullet}(\log D)$, the cokernel $Q^{\bullet}$ is concentrated in degrees $\geq k+1$. Hence, the hypercohomology groups $\mathbb{H}^{k-1}(Q^{\bullet})$ and $\mathbb{H}^{k}(Q^{\bullet})$ are necessarily zero. Using the hypercohomology long exact sequence associated to the above short exact sequence, we can conclude that
%\[
%\mathbb{H}^{k}(W_{k}\Omega_{X}^{\bullet}(\log D))\cong\mathbb{H}^{k}(\Omega_{X}^{\bullet}(\log D))
%\]
%\end{proof}

\begin{Theorem}	\label{loc trivial log PA Hodge thm} 
The restriction of a locally trivial log Picard algebroid on $(X,D)$ to the complement $U=X\setminus D$ defines a cohomology class in the subgroup  
\[
F^{1}H^{2}(U,\mathbb{C})\cap W_{3}H^{2}(U,\mathbb{C}) \subset H^2(U,\CC),
\]
and this defines a bijection on isomorphism classes.
\end{Theorem}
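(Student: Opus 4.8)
The plan is to compare the two classifications already in hand: by Proposition~\ref{loc trivial PA cohomology}, locally trivial log Picard algebroids are classified by $H^1(\Omega^{1,\mathrm{cl}}_X(\log D))$, and the restriction map to $U$ factors through the inclusion of complexes $\Omega^{1,\mathrm{cl}}_X(\log D)\hookrightarrow \sigma_{\leq 1}\Omega^{\bullet}_X(\log D)[?]$... more precisely through the natural map $H^1(\Omega^{1,\mathrm{cl}}_X(\log D))\to \HH^2(\Omega^{\bullet}_X(\log D)) = H^2(U,\CC)$. So the theorem amounts to two assertions: (i) the image of this map is exactly $F^1H^2(U,\CC)\cap W_3 H^2(U,\CC)$, and (ii) the map is injective. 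For injectivity I would argue that if the resulting class in $H^2(U,\CC)$ vanishes, then the \v Cech cocycle $A_{ij}$ for $\Omega^{1,\mathrm{cl}}_X(\log D)$ becomes a coboundary already in the complex $[\Omega^0_X(\log D)\to\Omega^1_X(\log D)\to\cdots]$; since global closed logarithmic forms are not exact unless zero (\cite[Proposition 1.3.2]{Deligne71}), one can promote this to a coboundary within $\Omega^{1,\mathrm{cl}}_X(\log D)$ itself, forcing the algebroid to be trivial. Alternatively, injectivity follows from the short exact sequence in the proof of Theorem~\ref{log PA classification theorem} once the image is identified, since the kernel of $H^1(\Omega^{1,\mathrm{cl}}_X(\log D))\to H^1(\Omega^1_X(\log D))$ meeting the relevant pieces is controlled by $H^0(\Omega^{2,\mathrm{cl}}_X(\log D))$.

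The heart of the argument is (i), the identification of the image. The containment in $F^1$ is essentially formal: the class comes from $H^1(\Omega^{1,\mathrm{cl}}_X(\log D))$, which maps through $\HH^2(F^1\Omega^\bullet_X(\log D))$ — i.e. through forms of Hodge-filtration level $\geq 1$ — so its image in $H^2(U,\CC)$ lies in $F^1$. For the containment in $W_3H^2(U,\CC)$, I would use the description of the weight filtration via the canonical filtration: $W_3 H^2(U,\CC) = \mathrm{im}(\HH^2(W_1\Omega^\bullet_X(\log D))\to H^2(U,\CC))$ where $W_1\Omega^\bullet_X(\log D)$ is the subcomplex with at most one $\dlog$; a closed logarithmic $1$-form lies in $W_1\Omega^1_X(\log D) = \Omega^1_X(\log D)$ trivially, so the \v Cech–hypercohomology class built from $\{A_{ij}\}$ lands in the $W_1$-truncated piece, hence in $W_3$. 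For the reverse inclusion — that every class in $F^1\cap W_3$ arises — I would run the weight spectral sequence \eqref{eq: weight spectral sequence}: the piece $W_3 H^2(U,\CC)$ is an extension of $\mathrm{Gr}^W_3 H^2(U,\CC) = E_2^{-1,3}\subset \ker(H^1(D^{(1)},\CC)\to H^3(X,\CC))$ by $W_2 H^2(U,\CC) = H^2(X,\CC)$ (modulo the image from $H^0(D^{(2)})$), and I would show that imposing $F^1$ on top of $W_3$ cuts this down precisely to the image of closed logarithmic $1$-forms. Concretely, a hypercohomology class in $\HH^2(F^1\Omega^\bullet_X(\log D))$ is represented by a \v Cech cocycle $(A_{ij}, B_i)$ with $A_{ij}\in\Omega^1(\log D)$, $B_i\in\Omega^{2,\mathrm{cl}}(\log D)$; lying in $W_3$ forces (after a coboundary adjustment using degeneration of the weight spectral sequence at $E_2$) the $B_i$ to be expressible as $dA_i'$ for local $1$-forms $A_i'$ of weight $\leq 1$, and absorbing these into the $A_{ij}$ produces a cocycle valued in $\Omega^{1,\mathrm{cl}}_X(\log D)$.

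I expect the main obstacle to be this last reverse inclusion: showing that $F^1\cap W_3$ is no bigger than the image of $H^1(\Omega^{1,\mathrm{cl}}_X(\log D))$. The subtlety is that a priori a class in $F^1 H^2(U,\CC)$ need only be representable by a \emph{closed} logarithmic $2$-form (an element of $H^0(\Omega^{2,\mathrm{cl}}_X(\log D))$ contributes to $F^2\subset F^1$), or by a \v Cech–de Rham cocycle mixing $\Omega^1$ and $\Omega^2$ pieces; one must show that the additional constraint of being in $W_3$ (equivalently, that its residue along $D^{(2)}$, living in $\mathrm{Gr}^W_4 H^2(U) = E_2^{-2,4}\subset H^0(D^{(2)},\CC)$, vanishes) is exactly enough to eliminate the genuinely weight-$4$ contributions and reduce to a single-$\dlog$, i.e. weight-$\leq 1$, representative — which on the level of complexes means a closed logarithmic $1$-form. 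This is where the interplay of the Hodge and weight filtrations, and the explicit form of the $d_1$ differential \eqref{eq:d1 weight spectral sequence} in the weight spectral sequence, must be used carefully. In the smooth-divisor case the computation is transparent because the weight filtration is two-step (Remark~\ref{smooth two step weight remark}) and $W_3 = W_4 = H^2(U,\CC)$, so one recovers Proposition~\ref{loc trivial PA cohomology}'s statement that everything is locally trivial and the class lands in $F^1\cap W_3 = F^1 H^2(U,\CC)$; I would present that case first as a sanity check before treating the general simple normal crossings situation.
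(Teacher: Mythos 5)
Your skeleton matches the paper's: both containments (the class lies in $F^{1}$ because it lifts to $\HH^{2}(F^{1}\Omega_{X}^{\bullet}(\log D))$, and in $W_{3}$ because a \v{C}ech cocycle of closed logarithmic $1$-forms lands in the subcomplex $W_{1}\Omega_{X}^{\bullet}(\log D)$) are correct and are essentially the arguments the paper gives. But the two steps you yourself flag as delicate --- injectivity and, above all, the reverse inclusion --- are where the proposal falls short, and for the same underlying reason: you never pin down the intermediate complex through which the restriction map factors. It is not $\sigma_{\leq 1}\Omega_{X}^{\bullet}(\log D)$ (your ``[?]'') but the bi-filtered piece $F^{1}W_{1}\Omega_{X}^{\bullet}(\log D)=[\Omega_{X}^{1}(\log D)\to\Omega_{X}^{1}\wedge\Omega_{X}^{1}(\log D)\to\cdots]$, which resolves $\Omega_{X}^{1,\mathrm{cl}}(\log D)$, giving $H^{1}(\Omega_{X}^{1,\mathrm{cl}}(\log D))\cong\HH^{2}(F^{1}W_{1}\Omega_{X}^{\bullet}(\log D))$. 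This identification is the linchpin. Once you have it, injectivity reduces to showing that $\HH^{2}$ of each of $F^{1}W_{1}$, $W_{1}$, $F^{1}$ injects into $\HH^{2}(\Omega_{X}^{\bullet}(\log D))$; on the weight side this is a genuine computation, using $\HH^{1}(\mathrm{Gr}_{2}^{W}\Omega_{X}^{\bullet}(\log D))=\HH^{1}(j_{2*}\Omega_{D^{(2)}}^{\bullet}[-2])=0$ together with the fact that $W_{2}\Omega_{X}^{\bullet}(\log D)\hookrightarrow\Omega_{X}^{\bullet}(\log D)$ is a quasi-isomorphism through degree $2$. Neither of your suggested injectivity arguments supplies this: the first (``promote the coboundary to one within $\Omega_{X}^{1,\mathrm{cl}}(\log D)$'') is an assertion, not an argument, and the second concerns the kernel of the forgetful map to $H^{1}(\Omega_{X}^{1}(\log D))$, which is a different map.

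The surjectivity is the genuine gap. Your key sentence --- being in $W_{3}$ forces, after a coboundary adjustment, each $B_{i}$ to be $dA_{i}'$ with $A_{i}'$ of weight $\leq 1$ --- is precisely the statement that needs proof, and the cocycle-level route you sketch does not obviously close: $W_{3}$-membership is a global cohomological condition, and converting it into the local statement that the $B_{i}$ may be taken in $W_{1}\Omega_{X}^{2}(\log D)$ requires knowing that the obstruction to a class of $\HH^{2}(F^{1}\Omega_{X}^{\bullet}(\log D))$ lying in $\HH^{2}(F^{1}W_{1}\Omega_{X}^{\bullet}(\log D))$ is exactly an element of $H^{0}(D^{(2)},\CC)$ (the common double residue of the $B_{i}$, well defined because $dA_{ij}$ has no $\dlog\wedge\dlog$ term) and that this element is what detects failure of $W_{3}$-membership. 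The paper packages this as the two short exact sequences $0\to\HH^{2}(F^{1}W_{1})\to\HH^{2}(W_{1})\to H^{2}(\OO_{X})\to 0$ and $0\to\HH^{2}(F^{1}W_{1})\to\HH^{2}(F^{1})\to H^{0}(D^{(2)},\CC)\to 0$, after which surjectivity is a short diagram chase: a class in $F^{1}\cap W_{3}$ not coming from $\HH^{2}(F^{1}W_{1})$ would have nonzero image in $H^{2}(\OO_{X})$ or in $H^{0}(D^{(2)},\CC)$, contradicting $F^{1}$ or $W_{3}$ respectively. Without the $F^{1}W_{1}$ resolution and these two sequences, your outline remains a plan rather than a proof.
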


\begin{proof}
By Proposition \ref{loc trivial PA cohomology}, locally trivial log Picard algebroids on $(X,D)$ are classified by the cohomology group $H^{1}(\Omega_{X}^{1,\mathrm{cl}}(\log D))$. Our goal is to prove the following
\begin{equation}	\label{eq: loc trivial log PA hodge}
H^{1}(\Omega_{X}^{1,\mathrm{cl}}(\log D))\cong F^{1}H^{2}(U,\mathbb{C})\cap W_{3}H^{2}(U,\mathbb{C}),
\end{equation}
using the description of the cohomology of $U$ in terms of logarithmic forms given in Section~\ref{mht}.
First, we observe that the sheaf $\Omega_{X}^{1,\mathrm{cl}}(\log D)$ can be resolved by the following complex
\[
\Omega_{X}^{1}(\log D)\rightarrow\Omega_{X}^{1}\wedge\Omega_{X}^{1}(\log D)\rightarrow\Omega_{X}^{2}\wedge\Omega_{X}^{1}(\log D)\rightarrow\dots
\]
Using the definition of the weight filtration on $\Omega_{X}^{\bullet}(\log D)$, we see that this complex is $F^{1}W_{1}\Omega_{X}^{\bullet}(\log D)$, the first truncation of $W_{1}\Omega_{X}^{\bullet}(\log D)$. Hence, we have the isomorphism
\[
H^{1}(\Omega_{X}^{1,\mathrm{cl}}(\log D)) \cong \HH^{2}(F^{1}W_{1}\Omega_{X}^{\bullet}(\log D)).
\]
Now, consider the commutative diagram of complexes
\[
\xymatrix{
F^{1}W_{1}\Omega_{X}^{\bullet}(\log D)\ar[r]\ar[d] & W_{1}\Omega_{X}^{\bullet}(\log D)\ar[d] \\
F^{1}\Omega_{X}^{\bullet}(\log D)\ar[r] & \Omega_{X}^{\bullet}(\log D)}
\]
We will now show that if we take the second hypercohomology $\mathbb{H}^{2}$ of all the complexes in the above diagram, then all of the induced maps are injective. This will prove that the map
\begin{equation} \label{eq:HH2 into F1W3}
\HH^{2}(F^{1}W_{1}\Omega_{X}^{\bullet}(\log D))\rightarrow F^{1}H^{2}(U,\CC)\cap W_{3}H^{2}(U,\CC),
\end{equation}
is injective. By the $E_{1}$-degeneration of the logarithmic Hodge-de Rham spectral sequence and the fact that $\Omega_{X}^{\bullet}(\log D)$ with the Hodge filtration is a biregular filtered complex, we obtain that the map
\[
\mathbb{H}^{2}(F^{1}\Omega_{X}^{\bullet}(\log D))\rightarrow\mathbb{H}^{2}(\Omega_{X}^{\bullet}(\log D))
\]
is injective \cite[Proposition 1.3.2]{Deligne71}. We now prove the injectivity of the map
\begin{equation}	\label{eq:H2(W1) to H2 injection}
\mathbb{H}^{2}(W_{1}\Omega_{X}^{\bullet}(\log D))\rightarrow\mathbb{H}^{2}(\Omega_{X}^{\bullet}(\log D)).
\end{equation}
To do this, we first consider the short exact sequence of complexes
\[
0\rightarrow W_{1}\Omega_{X}^{\bullet}(\log D)\rightarrow W_{2}\Omega_{X}^{\bullet}(\log D)\rightarrow\mathrm{Gr}_{2}^{W}\Omega_{X}^{\bullet}(\log D)\rightarrow0
\]
with the associated hypercohomology long exact sequence
\begin{equation} \label{weight hypercohomology sequence}
\dots\rightarrow\mathbb{H}^{1}(\mathrm{Gr}_{2}^{W}\Omega_{X}^{\bullet}(\log D))\rightarrow\mathbb{H}^{2}(W_{1}\Omega_{X}^{\bullet}(\log D))\rightarrow\mathbb{H}^{2}(W_{2}\Omega_{X}^{\bullet}(\log D))\rightarrow\dots,
\end{equation}
Recall from the Poincar\'e residue isomorphism in \eqref{Poincare residue}, we have the following description of the associated graded objects of the weight filtration 
\[
\mathrm{Gr}_{2}^{W}\Omega_{X}^{\bullet}(\log D)\cong j_{2_{*}}\Omega_{D^{(2)}}^{\cdot}[-2],
\]
and hence $\mathbb{H}^{1}(\mathrm{Gr}_{2}^{W}\Omega_{X}^{\bullet}(\log D))=0$. Therefore, we have the injectivity of the map
\[
\mathbb{H}^{2}(W_{1}\Omega_{X}^{\bullet}(\log D))\rightarrow\mathbb{H}^{2}(W_{2}\Omega_{X}^{\bullet}(\log D)).
\]
%By Lemma \ref{W_k=all lemma}, we know that $\mathbb{H}^{2}(W_{2}\Omega_{X}^{\bullet}(\log D))$ is isomorphic to $\mathbb{H}^{2}(\Omega_{X}^{\bullet}(\log D))$ and 
But the inclusion of $W_k\Omega_X^\bullet(\log D)$ in $\Omega_X^\bullet(\log D)$ is a quasi-isomorphism up to and including degree $k$, and so the map \eqref{eq:H2(W1) to H2 injection} must be injective as well. Using the same argument as above, we obtain also that the following map
\[
\mathbb{H}^{2}(F^{1}W_{1}\Omega_{X}^{\bullet}(\log D))\rightarrow\mathbb{H}^{2}(F^{1}\Omega_{X}^{\bullet}(\log D))
\]
is injective. To conclude the proof of the theorem, we need to show that the map earlier in \eqref{eq:HH2 into F1W3} is surjective. The group $\HH^{2}(F^{1}W_{1}\Omega_{X}^{\bullet}(\log D))$ injects into $\HH^{2}(W_{1}\Omega_{X}^{\bullet}(\log D))$ with cokernel $H^{2}(\OO_{X})$,
\begin{equation} \label{eq:F1W1 into W1}
0\rightarrow\HH^{2}(F^{1}W_{1}\Omega_{X}^{\bullet}(\log D))\rightarrow\HH^{2}(W_{1}\Omega_{X}^{\bullet}(\log D))\rightarrow H^{2}(\OO_{X})\rightarrow0,
\end{equation}
and it injects into $\HH^{2}(F^{1}\Omega_{X}^{\bullet}(\log D))$ with cokernel $H^{0}(D^{(2)},\CC)$,
\begin{equation} \label{eq:F1W1 into F1}
0\rightarrow\HH^{2}(F^{1}W_{1}\Omega_{X}^{\bullet}(\log D))\rightarrow\HH^{2}(F^{1}\Omega_{X}^{\bullet}(\log D))\rightarrow H^{0}(D^{(2)},\CC)\rightarrow0.
\end{equation}
Suppose that there exists a class in $F^{1}H^{2}(U,\CC)\cap W_{3}H^{2}(U,\CC)$ such that it is not in the image of $\HH^{2}(F^{1}W_{1}\Omega_{X}^{\bullet}(\log D))$ under the map \eqref{eq:HH2 into F1W3}. Then, this class must either map to $H^{2}(\OO_{X})$ or map to $H^{0}(D^{(2)},\CC)$. However, if this class maps to $H^{2}(\OO_{X})$, then this contradicts that the class lies in $F^{1}H^{2}(U,\CC)$. If this class maps to $H^{0}(D^{(2)},\CC)$, then this contradicts that the class lies in $W_{3}H^{2}(U,\CC)$. Thus, we obtain the surjectivity of the map in \eqref{eq:HH2 into F1W3}.
\end{proof}

\begin{Example}	\label{Deligne line bundle example}
Consider $(\mathbb{P}^{2},D)$ where $D$ is the union of three projective lines in triangle position. The divisor complement is $U=\mathbb{C}^{*}\times\mathbb{C}^{*}=\mathrm{Spec}\,\CC[x^{\pm 1},y^{\pm 1}]$ and we have $H^{2}(U,\mathbb{C})=\mathbb{C}$. If we write out the $k+m=4$ row of the $E_{1}^{-m,k+m}$-page of the weight spectral sequence given in~\eqref{eq: weight spectral sequence} with the differentials $d_1$ given in~\eqref{eq:d1 weight spectral sequence}, we have 
\[
0\overset{d_{1}}{\rightarrow}\underbrace{H^{0}(D^{(2)},\mathbb{C})}_{\mathbb{C}^{3}}\overset{d_{1}}{\rightarrow}\underbrace{H^{2}(D,\mathbb{C})}_{\mathbb{C}^{3}}\overset{d_{1}}{\rightarrow}\underbrace{H^{4}(X,\mathbb{C})}_{\mathbb{C}}\overset{d_{1}}{\rightarrow}0
\]
We obtain that the $k+m=4$ row of the $E_{2}^{-m,k+m}$-page of the weight spectral sequence is
\[
0\rightarrow\mathbb{C}\rightarrow0\rightarrow0\rightarrow0
\]
The graded piece $\mathrm{Gr}^{W}_{4}H^{2}(U,\mathbb{C})$ is the $E_{2}^{-2,4}$-term of the weight spectral sequence and so we have:
\[
\mathrm{Gr}^{W}_{4}H^{2}(U,\mathbb{C})=\mathbb{C}
\]
This implies that $W_{3}H^{2}(U,\mathbb{C})$ is necessarily zero and so by the previous theorem, the log Picard algebroids for this example cannot be locally trivial. We now present a non-trivial log Picard algebroid for this example.  At a vertex of the triangle, $D$ can be given locally as $\{xy=0\}$. If we consider the following closed log 2-form $B$,
\[
B:=\frac{1}{(2\pi i)^{2}}\frac{dx}{x}\wedge\frac{dy}{y}
\]
this is not locally exact near the vertex of the triangle. If we twist the Lie bracket on the trivial log Picard algebroid $\TT_{\PP^2}(-\log D)\oplus \OO_{\PP^2}$ by the form $B$ as illustrated in Example \ref{global split log PA}, we obtain a split log Picard algebroid on $(\PP^2,D)$.
%
%If we restrict this closed log 2-form $B$ to the complement $\mathbb{C}^{*}\times\mathbb{C}^{*}$, this is the curvature of the Deligne line bundle \cite{Deligne91}.
%\marco{This is an interesting remark but it is an orphan here, it is not connected to anything or relevant.  Shouldn't it go later for the prequantization section?} \kevin{In the prequantization section, we prove that log Picard algebroids which admit prequantization are the ones with cohomology class $H^1(\Omega_X^1(\log D))$. The twisting of the log Picard algebroid with this curvature form $B$ has cohomology class $H^0(\Omega_X^2(\log D))$ is not of this type. Since this is an orphan and doesn't add anything here or later- it should be deleted.}
%\marco{I realize that, but it doesn't change the fact that this is an important example for the prequantization section.  It is an example of a line bundle on the complement which you might have thought is meromorphic but which doesn't satisfy the conditions.  Definitely it would be a bad idea to remove this "example".  In fact you should expand on it and explain your understanding of how it does not fit in and why. Think about where such commentary should go, possibly as a longer remark after the prequantization theorem or at the end of that section.}
%\kevin{see Example 3.26: a detailed discussion about the Deligne line bundle; how its curvature gives a closed log 2-form and the non-trivial log Picard algebroid obtained from twisting $\TT_{\PP^2}(-\log D)\oplus\OO_{\PP^{2}}$ by this log 2-form cannot be prequantized.}
\end{Example}

For the remainder of this paper, all log Picard algebroids will be taken to be locally trivial log Picard algebroids.

\subsection{Residues and local systems}
We now describe a notion of residue for log Picard algebroids. We begin with the residue short exact sequence
\[
0\rightarrow\Omega_{X}^{1}\rightarrow\Omega_{X}^{1}(\log D)\overset{\mathrm{Res}}{\rightarrow}\oplus_{i=1}^{m}\mathcal{O}_{D_{i}}\rightarrow0
\]
where $D_{1},\dots,D_{m}$ are the irreducible components of the simple normal crossings divisor $D$. We can describe the residue map locally. Since $D$ is a simple normal crossings divisor, it can be expressed locally by the equation $f_{1}\dots f_{m}=0$ and as a result, the logarithmic 1-form can be written as
\[
\sum_{i=1}^{k}\alpha_{i}\frac{df_{i}}{f_{i}}+\gamma_{i}
\]
where $\alpha_{i},\gamma_{i}\in\mathcal{O}_{X}$. The residue map is given by
\[
\sum_{i=1}^{k}\alpha_{i}\frac{df_{i}}{f_{i}}+\gamma_{i}\overset{\mathrm{Res}}{\longmapsto}(\alpha_1,\dots,\alpha_k).
\]
If we consider closed differential forms, we have the following version of the residue short exact sequence
\[
0\rightarrow\Omega_{X}^{1,\mathrm{cl}}\rightarrow\Omega_{X}^{1,\mathrm{cl}}(\log D)\overset{\mathrm{Res}}{\rightarrow}\oplus_{i=1}^{k}\mathcal{\mathbb{C}}_{D_{i}}\rightarrow0
\]
with the following long exact sequence in cohomology
\begin{equation} \label{eq: residue seq for loc PA}
\dots\rightarrow H^{1}(\Omega_{X}^{1,\mathrm{cl}})\rightarrow H^{1}(\Omega_{X}^{1,\mathrm{cl}}(\log D))\overset{\mathrm{Res}_{*}}{\rightarrow}\oplus_{i=1}^{k}H^{1}(D_{i},\CC)\rightarrow\dots
\end{equation}
We explain the geometric meaning of the displayed segment of the long exact sequence as follows. 
Let $\wt\AA$ be a Picard algebroid with class in $H^{1}(\Omega_{X}^{1,\mathrm{cl}})$. The map to $H^{1}(\Omega_{X}^{1,\mathrm{cl}}(\log D))$ is simply the canonical log Picard algebroid $\AA$ associated to $\wt\AA$, obtained by pulling back the anchor map $\wt\sigma:\wt\AA\to \TT_{X}(-\log D)$ along the inclusion map $i:\TT_{X}(-\log D)\rightarrow\TT_{X}$. These maps are algebroid morphisms and so we obtain the following fiber product diagram of Lie algebroids.
\[
\xymatrix{
0\ar[r] &\mathcal{O}_{X}\ar[r] &\widetilde{\mathcal{A}}\ar[r]^{\widetilde{\sigma}}  &\mathcal{T}_{X}\ar[r] &0 \\
0\ar[r] &\mathcal{O}_{X}\ar[r] \ar@{=}[u] &\mathcal{A}\ar[r] \ar[u] &\mathcal{T}_{X}(-\log D)\ar[r] \ar[u]^i &0} 
\]
The residue map from $H^{1}(\Omega_{X}^{1,\mathrm{cl}}(\log D))$ to $\oplus_{i=1}^{k}H^{1}(D_{i},\CC)$ indicates that there is a $\CC$-local system on $D$ associated to any log Picard algebroid; indeed, if we restrict $\AA$ to $D$ (suppose $D$ is smooth and irreducible for simplicity) we obtain the sequence 
\[
0\rightarrow\mathcal{O}_{D}\rightarrow\mathcal{A}|_{D}\overset{\sigma|_{D}}{\rightarrow}\mathcal{T}_{X}(-\log D)|_{D}\rightarrow0.
\]
The sheaf $\mathcal{T}_{X}(-\log D)|_{D}$ is the Atiyah algebroid of the normal bundle $N_{D}$, so in particular, it is a Picard algebroid on $D$. Let $E$ be the central section of this Picard algebroid. We make the following definition:

\begin{Definition}	\label{residue of log PA def}
The \textbf{\textit{residue}} of the log Picard algebroid along $D$ is the $\mathbb{C}$-local system $R$ given by sections of the affine line bundle $\sigma|_D^{-1}(E)$ which are central in $\AA|_{D}$. 
\end{Definition}

\begin{Remark}
If we had worked with the Zariski topology rather than the analytic topology, the group $H^{1}(D,\CC)$ would vanish. Using the long exact sequence in \eqref{eq: residue seq for loc PA}, this would imply that all locally trivial log Picard algebroids on $(X,D)$ must be pullbacks of Picard algebroids on $X$. 
\end{Remark}

\subsection{Examples}
We will now consider examples of log Picard algebroids on $(X,D)$ where $D$ is a smooth irreducible divisor in $X$. First, we have the following proposition:

\begin{Proposition}	\label{log PA example prop}
Let $X$ be a projective algebraic surface with first Betti number $b_{1}(X)=0$ and $D$ a smooth irreducible curve in $X$.  The group of isomorphism classes of locally trivial log Picard algebroids on $(X,D)$ is then an extension of the group of $\CC$-local systems on $D$ by the group of isomorphism classes of algebraic log Picard algebroids on $(X,D)$.  
%The group of isomorphism classes of log Picard algebroids on $(X,D)$ has rank equal to $h^{1,1}+h^{2,0}+2g-1$. Moreover, this group is an extension of the group of rank $h^{1,1}+h^{2,0}-1$ of log Picard algebroids 
%\marco{dimensions?  This is a very bizarre and informal way of stating this result} \kevin{modified statement of the proposition} 
%arising from pullback of Picard algebroids on $X$ by the group of rank $2g$ of log Picard algebroids which are in one-to-one correspondence with $\mathbb{C}$-local systems on $D$.
\end{Proposition}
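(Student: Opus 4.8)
The plan is to extract the statement directly from the residue long exact sequence~\eqref{eq: residue seq for loc PA}, using that here $D$ is smooth and irreducible, so $k=1$ and $\oplus_{i=1}^{k}H^{1}(D_{i},\CC)=H^{1}(D,\CC)$. Since $D$ is smooth, Proposition~\ref{loc trivial PA cohomology} identifies the group of locally trivial log Picard algebroids on $(X,D)$ with $H^{1}(\Omega_{X}^{1,\mathrm{cl}}(\log D))$, and by Definition~\ref{residue of log PA def} the residue construction is exactly the classifying-cohomology map $\mathrm{Res}_{*}\colon H^{1}(\Omega_{X}^{1,\mathrm{cl}}(\log D))\to H^{1}(D,\CC)$ appearing in~\eqref{eq: residue seq for loc PA}, whose target is the group of $\CC$-local systems on $D$. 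By exactness of~\eqref{eq: residue seq for loc PA} at the middle term, $\ker(\mathrm{Res}_{*})$ is the image of $H^{1}(\Omega_{X}^{1,\mathrm{cl}})$, the group of Picard algebroids on $X$; since $X$ is projective these are all algebraic (by Theorem~\ref{classification theorem} with $D=\emptyset$ they are classified by the hypercohomology of a complex of coherent sheaves, to which GAGA applies), and the map into $H^{1}(\Omega_{X}^{1,\mathrm{cl}}(\log D))$ is the pullback along the fibre-product diagram preceding Definition~\ref{residue of log PA def}. Thus $\ker(\mathrm{Res}_{*})$ is precisely the group of algebraic log Picard algebroids on $(X,D)$ (cf.\ the Remark after Definition~\ref{residue of log PA def}), and~\eqref{eq: residue seq for loc PA} already yields left-exactness of the desired sequence, with the residue map as the quotient map onto the $\CC$-local systems.

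It remains to show that $\mathrm{Res}_{*}$ is surjective, equivalently that the next connecting map $H^{1}(D,\CC)\to H^{2}(\Omega_{X}^{1,\mathrm{cl}})$ in~\eqref{eq: residue seq for loc PA} vanishes; for this it suffices that $H^{2}(\Omega_{X}^{1,\mathrm{cl}})=0$, and here is where the hypotheses on $X$ enter. The Poincar\'e lemma for holomorphic forms gives the short exact sequence of sheaves $0\to\CC_{X}\to\OO_{X}\overset{d}{\to}\Omega_{X}^{1,\mathrm{cl}}\to 0$, with associated long exact sequence containing the segment
\[
H^{2}(X,\CC)\longrightarrow H^{2}(\OO_{X})\longrightarrow H^{2}(\Omega_{X}^{1,\mathrm{cl}})\longrightarrow H^{3}(X,\CC).
\]
Since $X$ is a smooth projective surface, Poincar\'e duality gives $b_{3}(X)=b_{1}(X)=0$, so $H^{3}(X,\CC)=0$; and since $X$ is K\"ahler, the map $H^{2}(X,\CC)\to H^{2}(\OO_{X})$ is the projection of the Hodge decomposition onto the $(0,2)$-component, hence surjective. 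Therefore $H^{2}(\OO_{X})\to H^{2}(\Omega_{X}^{1,\mathrm{cl}})$ is the zero map and $H^{2}(\Omega_{X}^{1,\mathrm{cl}})$ injects into $H^{3}(X,\CC)=0$, so $H^{2}(\Omega_{X}^{1,\mathrm{cl}})=0$ as needed.

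Combining the two steps, $\mathrm{Res}_{*}$ sits in a short exact sequence $0\to(\text{algebraic log Picard algebroids on }(X,D))\to(\text{locally trivial log Picard algebroids on }(X,D))\to H^{1}(D,\CC)\to 0$, which exhibits the middle group as an extension of the group of $\CC$-local systems on $D$ by the group of algebraic log Picard algebroids, as claimed. I expect the main obstacle to be the surjectivity of the residue map: it is not formal and genuinely uses the surface hypothesis, through the two consequences $b_{3}(X)=0$ and the Hodge-theoretic surjectivity $H^{2}(X,\CC)\twoheadrightarrow H^{2}(\OO_{X})$, which together force the vanishing of $H^{2}(\Omega_{X}^{1,\mathrm{cl}})$. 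A secondary point to check carefully is the identification of $\ker(\mathrm{Res}_{*})$ with the algebraic log Picard algebroids: one must confirm via GAGA that Picard algebroids on the projective surface $X$ are algebraic and that the pullback construction realises exactly those log Picard algebroids with vanishing residue (note that this pullback map need not be injective, its kernel being generated by the Atiyah algebroid of $\OO_{X}(D)$, so that the kernel term is a quotient of the group of Picard algebroids on $X$).
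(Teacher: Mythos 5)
Your proposal is correct and follows essentially the same route as the paper: the residue long exact sequence, the vanishing of $H^{2}(\Omega_{X}^{1,\mathrm{cl}})$ forced by $b_{1}(X)=0$, and the identification of the kernel term with the group of algebraic log Picard algebroids via GAGA and the Zariski-topology residue sequence (which the paper carries out explicitly, computing both sides as the cokernel of the degree map $H^{0}(D,\CC)\to H^{1}(\Omega_{X}^{1,\mathrm{cl}})$, exactly the quotient you flag in your final parenthetical). The only real difference is cosmetic: you derive $H^{2}(\Omega_{X}^{1,\mathrm{cl}})=0$ from the sheaf sequence $0\to\CC_{X}\to\OO_{X}\to\Omega_{X}^{1,\mathrm{cl}}\to 0$ together with the Hodge-theoretic surjectivity of $H^{2}(X,\CC)\to H^{2}(\OO_{X})$ and $b_{3}(X)=0$, whereas the paper uses the quasi-isomorphism $\Omega_{X}^{1,\mathrm{cl}}\simeq[\Omega_{X}^{1}\to\Omega_{X}^{2}]$ and $E_{1}$-degeneration to reduce to $h^{1,2}=h^{2,1}=0$; the two arguments are equivalent.
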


\begin{proof}
First, we analyze the long exact sequence in \eqref{eq: residue seq for loc PA}
\begin{equation} \label{eq:residue sequence low degrees}
\dots\rightarrow H^{0}(D,\CC)\rightarrow H^{1}(\Omega_{X}^{1,\mathrm{cl}})\rightarrow H^{1}(\Omega_{X}^{1,\mathrm{cl}}(\log D))\overset{\mathrm{Res}_{*}}{\rightarrow}H^{1}(D,\CC)\rightarrow H^{2}(\Omega_{X}^{1,\mathrm{cl}})\rightarrow\dots
\end{equation}
Since $X$ is projective, we have the quasi-isomorphism of complexes
\[
\Omega_{X}^{1,\mathrm{cl}}\hookrightarrow[\Omega_{X}^{1}\rightarrow\Omega_{X}^{2}]
\]
which implies
\[
H^{2}(\Omega_{X}^{1,\mathrm{cl}})\cong\mathbb{H}^{2}(\Omega_{X}^{1}\rightarrow\Omega_{X}^{2})
\]
Moreover, the Hodge-de Rham spectral sequence degenerates at $E_{1}$ and so we have the short exact sequence
\[
0\rightarrow H^{1}(\Omega_{X}^{2})\rightarrow\mathbb{H}^{2}(\Omega_{X}^{1}\rightarrow\Omega_{X}^{2})\rightarrow H^{2}(\Omega_{X}^{1})\rightarrow0
\]
From the hypothesis, we have $b_{1}(X)=0$. By Poincare duality, this gives $b_{3}(X)=0$ and so the Hodge numbers $h^{1,2}$ and $h^{2,1}$ must be zero. Thus, $H^{2}(\Omega_{X}^{1,\mathrm{cl}})=0$. We may now rewrite~\eqref{eq:residue sequence low degrees} as follows
\[
0\rightarrow\mathrm{coker}(H^0(D,\CC)\rightarrow H^1(\Omega_X^{1,\mathrm{cl}}))\rightarrow H^1(\Omega_X^{1,\mathrm{cl}}(\log D))\rightarrow H^1(D,\CC)\rightarrow0. 
\]
%By the exact same arguments, we obtain
%\[
%0\rightarrow H^{0}(\Omega_{X}^{2})\rightarrow\mathbb{H}^{1}(\Omega_{X}^{1}\rightarrow\Omega_{X}^{2})\rightarrow H^{1}(\Omega_{X}^{1})\rightarrow0
%\]
%which means that the dimension of $\mathbb{H}^{1}(\Omega_{X}^{1}\rightarrow\Omega_{X}^{2})$ is exactly $h^{1,1}+h^{2,0}$.
It remains now to prove that the group of isomorphism classes of algebraic log Picard algebroids on $(X,D)$ is isomorphic to the cokernel of the map $H^0(D,\CC)\rightarrow H^1(\Omega_X^{1,\mathrm{cl}})$. To see this, consider the residue short exact sequence in the Zariski topology
\[
\dots\rightarrow H^0(D_\mathrm{alg},\CC)\rightarrow H^1(\Omega_{X,{\mathrm{alg}}}^{1,\mathrm{cl}})\rightarrow H^1(\Omega_{X,\mathrm{alg}}^{1,\mathrm{cl}}(\log D_\mathrm{alg}))\rightarrow H^1(D_\mathrm{alg},\CC)\rightarrow\dots
\]
As $D$ was assumed to be a smooth irreducible divisor, the map from $H^0(D_\mathrm{alg},\CC)$ to $H^1(\Omega_{X,{\mathrm{alg}}}^{1,\mathrm{cl}})$ is simply the degree map and hence injective. Furthermore, in the Zariski topology, the cohomology group $H^1(D_\mathrm{alg},\CC)$ is zero. Thus, we obtain that the group of isomorphism classes of algebraic log Picard algebroids $H^1(\Omega_{X,\mathrm{alg}}^{1,\mathrm{cl}}(\log D_\mathrm{alg}))$ is isomorphic to the cokernel of $H^0(D_\mathrm{alg},\CC)\rightarrow H^1(\Omega_{X,{\mathrm{alg}}}^{1,\mathrm{cl}})$. However, since $D$ and $X$ are both projective, by the GAGA principle, this implies that $H^1(\Omega_{X,\mathrm{alg}}^{1,\mathrm{cl}}(\log D_\mathrm{alg}))$ is isomorphic to the cokernel of the map $H^0(D,\CC)\rightarrow H^1(\Omega_X^{1,\mathrm{cl}})$ which is exactly what we wanted to show.
%\[
%0\rightarrow H^{0}(D,\CC)\rightarrow H^{1}(\Omega_{X}^{1,\mathrm{cl}})\rightarrow H^{1}(\Omega_{X}^{1,\mathrm{cl}}(\log D))\rightarrow H^{1}(D,\CC)\rightarrow0
%\]
%This shows that the group of isomorphism classes of log Picard algebroids on $(X,D)$ has rank $h^{1,1}+h^{2,0}+2g-1$. Since $H^{1}(D,\CC)=\CC^{2g}$, the above sequence shows that this is an extension of the group of rank $h^{1,1}+h^{2,0}-1$ of log Picard algebroids arising from pullback of Picard algebroids on $X$ by the group of rank $2g$ of log Picard algebroids which are in one-to-one correspondence with $\mathbb{C}$-local systems on $D$.
\end{proof}

We have an obvious source of examples of log Picard algebroids which are obtained by the pullback of Picard algebroids on $X$ as described earlier. We will now provide a class of examples of log Picard algebroids on $(X,D)$ which do not arise in this obvious fashion. 

%where the map from $H^{1}(\Omega_{X}^{1,\mathrm{cl}})$ to $H^{1}(\Omega_{X}^{1,\mathrm{cl}}(\log D))$ in \eqref{eq: residue seq for loc PA} is not surjective which means that there are log Picard algebroids on $(X,D)$ which do not arise as pullbacks of Picard algebroids on $X$. \marco{grammar of this paragraph is rough}

\begin{Example} \label{curve in P2 PA}Consider $(\mathbb{P}^{2},D)$ where $D$ is a smooth projective curve of degree $d\geq3$ in $\mathbb{P}^{2}$. The map $H^0(D,\CC)\rightarrow H^1(\Omega_{\PP^2}^{1,\mathrm{cl}})$ is an isomorphism, hence there are no non-trivial algebraic log Picard algebroids for this example. Using the result of the above proposition, we obtain that
\[
H^{1}(\Omega_{\mathbb{P}^{2}}^{1,\mathrm{cl}}(\log D))\cong H^1(D,\CC) = \mathbb{C}^{2g},
\]
where $g$ is the genus of the curve $D$. The log Picard algebroids here are in one-to-one correspondence with the $\CC$-local systems on $D$.
\end{Example}

%In the above example, any Picard algebroid on $\mathbb{P}^{2}$ is trivial as a log Picard algebroid on $(\mathbb{P}^{2},D)$ and all log Picard algebroids are in one-to-one correspondence with $\mathbb{C}$-local systems on $D$. The next example will be an example where this is not the case. 
%
%\begin{Example} \label{cubic surface PA}
%Let $X$ be the smooth projective cubic surface and $D$ an elliptic curve in $X$. The relevant Hodge numbers of $X$ here are $h^{1,1}=7$ and $h^{2,0}=0$. Using the result of Proposition~\ref{log PA example prop}, we have
%\[
%H^{1}(\Omega_{X}^{1,\mathrm{cl}}(\log D))\cong\mathbb{C}^{8}.
%\]
%Now, since $H^1(D,\CC)=\CC^2$, we have that the above group is an extension of the rank 2 group of $\CC$-local systems on $D$ by the group of log Picard algebroids arising as pullback of Picard algebroids on $X$.
%\end{Example}

\subsection{Blowups and functoriality}
In this subsection, we describe the functoriality properties of log Picard algebroids, generalizing the treatment for usual Picard algebroids in Section 2.2 of \cite{BeilinsonBernstein93}. Let $\varphi:Y\rightarrow X$ be a morphism of projective varieties and $D$ a simple normal crossings divisor on $X$ such that $\varphi^{*}D$ is also a simple normal crossings divisor on $Y$. Suppose that $\mathcal{A}$ is a log Picard algebroid on $(X,D)$, consider the $\OO_{Y}$-module $\varphi^{!}\mathcal{A}:=\varphi^{*}\mathcal{A}\times_{\varphi^{*}\tlogd}\TT_{Y}(-\log \varphi^{*}D)$ with the Lie bracket on its sections defined as
\[
[(\xi,\sum_{i}f_{i}\otimes P_{i}),(\eta,\sum_{j}g_{j}\otimes Q_{j})]=([\xi,\eta],\sum_{i,j}f_{i}g_{j}\otimes[P_{i},Q_{j}]+\xi(g_j)\otimes P_{i}-\eta(f_i)\otimes Q_{j})
\]
where $\xi,\eta\in\TT_{Y}(-\log \varphi^{*}D)$, $f_i,g_j\in\OO_{Y}$ and $P_i,Q_j\in\varphi^{-1}\mathcal{A}$. The projection map $\pi_{\TT_{Y}(-\log \varphi^{*}D)}:\varphi^{!}\mathcal{A}\rightarrow\TT_{Y}(-\log \varphi^{*}D)$ preserves the Lie bracket and the central section $e_{\varphi^{!}\mathcal{A}}:\OO_{Y}\rightarrow\varphi^{!}\mathcal{A}$ is defined as $e_{\varphi^{!}\mathcal{A}}=(\varphi^{*}e_{\mathcal{A}},0)$. We make the following definition:

\begin{Definition}
The quadruple $(\varphi^{!}\mathcal{A},[\cdot,\cdot],\pi_{\TT_{Y}(-\log \varphi^{*}D)},e_{\varphi^{!}\mathcal{A}})$ is a log Picard algebroid on $(Y,\varphi^{*}D)$ and we call this the \textit{\textbf{pullback log Picard algebroid}} of $\mathcal{A}$. As before, we simply denote this quadruple by $\varphi^{!}\mathcal{A}$.
\end{Definition}

\begin{Theorem}	\label{log PA blowup thm} %\marco{smoothness?}
Let $X$ be a smooth projective algebraic surface $X$ with $b_{1}(X)=0$, $D$ a simple normal crossing divisor on $X$, and $\pi:\widetilde{X}\rightarrow X$ the blowup of $X$ at a point $p$ where $p\in D\subset X$. The pullback operation for log Picard algebroids defines an equivalence between the locally trivial log Picard algebroids on $(X,D)$ and the locally trivial log Picard algebroids on $(\widetilde{X},\pi^{*}D)$ where $\pi^{*}D:=D+E$ and $E$ is the exceptional divisor of the blowup.
\end{Theorem}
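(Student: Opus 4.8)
The plan is to reduce everything to the cohomology groups classifying locally trivial log Picard algebroids, namely $H^{1}(\Omega_{X}^{1,\mathrm{cl}}(\log D))$ on $(X,D)$ and $H^{1}(\Omega_{\widetilde X}^{1,\mathrm{cl}}(\log \pi^{*}D))$ on $(\widetilde X,\pi^{*}D)$, and to show that the pullback operation induces an isomorphism between them. First I would verify that $\varphi^{!}$ sends locally trivial algebroids to locally trivial ones: a local flat splitting on an open set $V\subseteq X$ pulls back to a flat splitting of $\varphi^{!}\AA$ over $\pi^{-1}(V)$, since the curvature is a local invariant and $\pi^{*}$ of a flat connection is flat (this is immediate from the definition of the bracket on $\varphi^{!}\AA$ and from Lemma~\ref{curvtr}). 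Then, using the cocycle description from the proof of Theorem~\ref{classification theorem}, the map on classes is simply $\pi^{*}:H^{1}(\Omega_{X}^{1,\mathrm{cl}}(\log D))\to H^{1}(\Omega_{\widetilde X}^{1,\mathrm{cl}}(\log \pi^{*}D))$ on \v{C}ech cocycles $\{A_{ij}\}$, so functoriality of the monoidal structure is automatic and it remains to show this pullback map is an isomorphism.

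For the computation I would use Theorem~\ref{loc trivial log PA Hodge thm} to identify both groups with $F^{1}H^{2}(U,\CC)\cap W_{3}H^{2}(U,\CC)$, where on the $\widetilde X$ side the complement is $\widetilde U = \widetilde X\setminus\pi^{*}D = \widetilde X\setminus(D\cup E)$. The key geometric observation is that $\pi$ restricts to an isomorphism $\widetilde X\setminus E \xrightarrow{\sim} X\setminus\{p\}$, and since $p\in D$, in fact $\widetilde U = \widetilde X\setminus(D\cup E)$ maps isomorphically onto $U\setminus\{p\} = U$ (the point $p$ was already removed when we deleted $D$). Hence $\widetilde U \cong U$ and $\pi^{*}:H^{2}(U,\CC)\to H^{2}(\widetilde U,\CC)$ is an isomorphism compatible with the Hodge and weight filtrations, because those filtrations are functorial for the morphism of pairs $(\widetilde X,\pi^{*}D)\to(X,D)$ and the mixed Hodge structure on $H^{k}$ of a variety depends only on the variety, not on the chosen compactification (Deligne). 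Therefore $\pi^{*}$ carries $F^{1}H^{2}(U,\CC)\cap W_{3}H^{2}(U,\CC)$ isomorphically onto $F^{1}H^{2}(\widetilde U,\CC)\cap W_{3}H^{2}(\widetilde U,\CC)$, and composing with the two applications of Theorem~\ref{loc trivial log PA Hodge thm} gives the desired bijection on isomorphism classes. That the bijection is monoidal follows from the \v{C}ech-cocycle description above together with the compatibility of the classifying maps with Baer sum recorded in Theorem~\ref{classification theorem}.

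The main obstacle is justifying the claim $\widetilde U\cong U$ cleanly and, more substantively, checking that the abstract bijection of $H^{2}$'s refines to a bijection on \emph{isomorphism classes of algebroids} with the monoidal structure — i.e. that nothing is lost in the passage through Proposition~\ref{loc trivial PA cohomology} and Theorem~\ref{loc trivial log PA Hodge thm}. One has to confirm that the isomorphism $F^{1}H^{2}(U)\cap W_{3}H^{2}(U)\cong H^{1}(\Omega_{X}^{1,\mathrm{cl}}(\log D))$ of Theorem~\ref{loc trivial log PA Hodge thm} is the \emph{natural} one induced by restriction to $U$, so that the square
\[
\xymatrix{
H^{1}(\Omega_{X}^{1,\mathrm{cl}}(\log D)) \ar[r]^-{\pi^{*}} \ar[d]_{\cong} & H^{1}(\Omega_{\widetilde X}^{1,\mathrm{cl}}(\log \pi^{*}D)) \ar[d]^{\cong} \\
F^{1}H^{2}(U,\CC)\cap W_{3}H^{2}(U,\CC) \ar[r]^-{\cong} & F^{1}H^{2}(\widetilde U,\CC)\cap W_{3}H^{2}(\widetilde U,\CC)
}
\]
commutes; this is exactly the content of the proof of that theorem, where the isomorphism is built from the inclusion $F^{1}W_{1}\Omega_{X}^{\bullet}(\log D)\hookrightarrow \Omega_{X}^{\bullet}(\log D)$, a map which is manifestly compatible with $\pi^{*}$. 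Granting this, the hypotheses $b_{1}(X)=0$ and $\dim X = 2$ are used only insofar as they enter Theorem~\ref{loc trivial log PA Hodge thm} and guarantee that all log Picard algebroids in sight are locally trivial, so the argument goes through.
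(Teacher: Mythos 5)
Your proposal is correct and follows essentially the same route as the paper: identify both classifying groups with $F^{1}H^{2}(U,\CC)\cap W_{3}H^{2}(U,\CC)$ via Theorem~\ref{loc trivial log PA Hodge thm}, observe that the divisor complement is unchanged by the blowup since $p\in D$, and invoke the independence of the filtrations from the choice of compactification. The only cosmetic difference is that the paper verifies the Hodge-filtration comparison by hand (using $H^{2}(\OO_{\widetilde{X}})=H^{2}(\OO_{X})$ from birational invariance of the plurigenus and Serre duality) and cites Deligne only for the weight filtration, whereas you appeal to the full independence of the mixed Hodge structure for both.
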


\begin{proof}
Let $\mathcal{A}$ be a locally trivial log Picard algebroid on $(X,D)$, the assignment $\mathcal{A}\mapsto\pi^{!}\mathcal{A}$ induces the map in cohomology
\[
H^{1}(\Omega_{X}^{1,\mathrm{cl}}(\log D))\rightarrow H^{1}(\Omega_{\widetilde{X}}^{1,\mathrm{cl}}(\log \pi^{*}D))
\]
and we want to show that this map is an isomorphism. From the proof of Theorem \ref{loc trivial log PA Hodge thm}, we have 
\begin{equation} \label{eq:Hodge weight for blowup}
H^{1}(\Omega_{\widetilde{X}}^{1,\mathrm{cl}}(\log \pi^{*}D))=F^{1}H^{2}(U,\CC)\cap W_{3}H^{2}(U,\CC)
\end{equation}
where the Hodge filtration and weight filtration on $H^{2}(U,\CC)$ is given by the compactification of $U$ into $\widetilde{X}$ by $D+E$. In order to prove our theorem, we now check that this indeed coincides with the Hodge and weight filtration on $H^{2}(U,\CC)$ for the usual compactification of $U$ into $X$ by $D$. First, we have
\[
F^{1}H^{2}(U,\CC)=\frac{H^{2}(U,\CC)}{H^{2}(\OO_{\widetilde{X}})}
\]
If we use the fact that the plurigenus is a birational invariant along with Serre duality, we have $H^{2}(\OO_{\widetilde{X}})=H^{2}(\OO_{X})$. This shows that $F^{1}H^{2}(U,\CC)$ does indeed coincide with the first Hodge filtration on $H^{2}(U,\CC)$ for the usual compactification of $U$ into $X$ by $D$.

As the weight filtration is independent of the choice of simple normal crossings divisor used in the compactification~\cite[3.2.11]{Deligne71}, $W_3H^2(U,\CC)$ for the compactification of $U$ into $\widetilde{X}$ by $D+E$ is the same as the compactification of $U$ into $X$ by $D$. This demonstrates that 
\[
H^{1}(\Omega_{X}^{1,\mathrm{cl}}(\log D))\rightarrow H^{1}(\Omega_{\widetilde{X}}^{1,\mathrm{cl}}(\log \pi^{*}D))
\]
is indeed an isomorphism. 
%Since $D$ is a smooth irreducible divisor, as observed in Remark \ref{smooth two step weight remark}, the weight filtration on $H^{2}(U,\CC)$ for the usual compactification of $U$ into $X$ by $D$ is a two step filtration. To conclude the proof, we need to show that the group $W_{3}H^{2}(U,\CC)$ in \eqref{eq:Hodge weight for blowup} is $H^{2}(U,\CC)$. We do this by showing that $Gr^{W}_{4}H^{2}(U,\CC)$  is zero. As in Example \ref{Deligne line bundle example}, we can write out the $k+m=4$ row of the $E_{2}^{-m,k+m}$-page of the weight spectral sequence
%\[
%0\rightarrow\underbrace{H^{0}((D+E)^{(2)},\mathbb{C})}_{\mathbb{C}}\overset{d_{1}}{\rightarrow}\underbrace{H^{2}(D+E,\mathbb{C})}_{\mathbb{C}^{2}}\overset{d_{1}}{\rightarrow}\underbrace{H^{4}(\widetilde{X},\mathbb{C})}_{\mathbb{C}}\rightarrow0.
%\]
%Taking the cohomology gives that the $k+m=4$ row of the $E_{2}^{-m,k+m}$-page is entirely zero and so we have
%\[
%\mathrm{Gr}^{W}_{4}H^{2}(U,\mathbb{C})=0.
%\]
\end{proof}

\section{Meromorphic line bundles}	\label{sec 3}

%\subsection{Basic setup and examples}

In this section, we will work in the analytic topology unless otherwise specified. First, we consider the sheaf of algebras $\mathcal{O}_{X}(*D)$ of meromorphic functions on $X$ with poles of arbitrary order on $D$. There is a local description of this sheaf: if the simple normal crossings divisor $D$ is given by $f_{1}f_{2}\dots f_{m}=0$, we may write $\mathcal{O}_{X}(*D)=\mathcal{O}_{X}[f_{1}^{-1},\dots,f_{m}^{-1}]$. The sheaf of algebras $\OO_{X}(*D)$ has a $\ZZ$-filtration $F^{k}$ along $D$ defined as:
\begin{equation}	\label{eq:Z-filtration}
F^{k}\OO_{X}(*D):=\OO_{X}(kD),
\end{equation}
where $\OO_{X}(kD)$ is the sheaf of meromorphic functions on $X$ with poles bounded by $kD$. 
\begin{Definition}
A meromorphic line bundle $\mathcal{M}$ on $(X,D)$ is a locally free rank one $\OO_{X}(*D)$-module.
\end{Definition}
Just as for usual holomorphic line bundles, we may choose local trivializations
\begin{equation}\label{loctrm}
e_{i}:\mathcal{M}|_{U_i}\overset{\cong}{\rightarrow}\OO_{X}(*D)|_{U_i}
\end{equation}
of $\MM$ over an analytic open cover $\mathcal{U}=(U_i)_{i\in I}$ of $X$, so that $\MM$ is represented by the \v{C}ech 1-cocycle $(g_{ij})\in \check{Z}^1(\mathcal{U},\OO_{X}^{\times}(*D))$ defined by the requirement $e_j = e_i g_{ij}$ over $U_i\cap U_j$.  Here 
$\OO_{X}^{\times}(*D)$ denotes the sheaf of invertible elements of $\OO_{X}(*D)$, which may be described locally by adjoining group generators $f_{1},\dots,f_{m}$ to the sheaf $\mathcal{O}_{X}^{\times}$ of nowhere-vanishing holomorphic functions.  This means that a section of $\mathcal{O}_{X}^{\times}(*D)$ has the local description
\[
hf_{1}^{k_{1}}f_{2}^{k_{2}}\dots f_{m}^{k_{m}}, 
\]
with $h\in\mathcal{O}_{X}^{\times}$ and $k_{1},\dots,k_{m}\in\mathbb{Z}$. 
As a result, we have the short exact sequence of sheaves of abelian groups
\begin{equation}	\label{eq:O*(D) ses}
1\rightarrow\mathcal{O}_{X}^{\times}\rightarrow\mathcal{O}_{X}^{\times}(*D)\overset{\mathrm{ord}}{\rightarrow}\oplus_{i=1}^{m}\mathbb{Z}_{D_{i}}\rightarrow1,
\end{equation}
where the order map is defined by
\begin{equation}	\label{eq:ord map}
hf_{1}^{k_{1}}f_{2}^{k_{2}}\dots f_{m}^{k_{m}}\overset{\mathrm{ord}}{\mapsto}(k_1,\ldots, k_m).\end{equation}   

Using the order map, we may associate to any meromorphic line bundle $\MM$ a $\ZZ$-local system $\ord(\MM)$ along $D$, defined as follows.  Given local trivializations of $\MM$ as in~\eqref{loctrm}, observe that the sheaf $\OO_{X}(*D)|_{U_i}$ has a $\ZZ$-filtration along $D_{i}:=D\cap U_{i}$ as defined in \eqref{eq:Z-filtration}; let $\ZZ_{D_i}$ be the locally constant sheaf on $D_i$ which labels this filtration. Since, on the double overlap $U_{i}\cap U_j$, the transition function $g_{ij}\in\OO_{X}^{\times}(*D)(U_{ij})$  has order $k_{ij} = \ord(g_{ij})$, we see that the $k_i$-filtered part of $\OO_{X}(*D)|_{U_i}$ is identified with the $k_j = k_i + k_{ij}$-filtered part of $\OO_X(*D)|_{U_j}$.
We conclude from this that $\MM$, like $\OO_X(*D)$, is filtered along $D$, but by a $\ZZ$-local system given by gluing $\ZZ|_{D_i}$ to $\ZZ|_{D_j}$ using the translation $k_{ij} = \ord(g_{ij})$.  We denote this residual local system by $\ord(\MM)$ and summarize our findings below.
%
% Consider the image of $g_{ij}$ under the ord map in \eqref{eq:ord map}, which we denote by $k_{ij}$. The map $k_{ij}$ glues together the $\ZZ$-torsors $\mathbb{Z}_{D_i}$ and $\mathbb{Z}_{D_j}$
%\[
%\ZZ_{D_i}\overset{k_{ij}}{\rightarrow}\ZZ_{D_j}.
%\]
%This defines a $\ZZ$-local system on $D$ which we call $\mathrm{ord}(\MM)$. 
%Moreover, we have the long exact sequence in cohomology associated to the above short exact sequence:
%\begin{equation}	\label{eq: mero line bundle sequence}
%\dots\rightarrow H^{0}(D,\ZZ)\rightarrow\mathrm{Pic}(X)\rightarrow H^{1}(\mathcal{O}_{X}^{\times}(*D))\overset{\mathrm{ord}_{*}}{\rightarrow} H^{1}(D,\ZZ)\rightarrow H^{2}(\mathcal{O}_{X}^{\times})\rightarrow\dots
%\end{equation}

\begin{Theorem}	\label{mero picard group theorem} The \textbf{meromorphic Picard group} $\mathrm{Pic}(X,D)$ of isomorphism classes of meromorphic line bundles on $(X,D)$ is given by the cohomology group
\[
H^{1}(\mathcal{O}_{X}^{\times}(*D)), 
\]
and the residual $\ZZ$-local system $\ord(\MM)$ of a meromorphic line bundle $\MM$, as defined above, is classified by the map $\ord_*$ in the sequence
\begin{equation}	\label{eq: mero line bundle sequence}
\dots\rightarrow H^{0}(D,\ZZ)\rightarrow\mathrm{Pic}(X)\rightarrow H^{1}(\mathcal{O}_{X}^{\times}(*D))\overset{\mathrm{ord}_{*}}{\rightarrow} H^{1}(D,\ZZ)\rightarrow H^{2}(\mathcal{O}_{X}^{\times})\rightarrow\dots
\end{equation}
%
%\[
%H^{1}(\OO_{X}^{*}(D))\overset{\mathrm{ord}_{*}}{\rightarrow} H^{1}(D,\ZZ)
%\]
induced by the short exact sequence~\eqref{eq:O*(D) ses}.
%A meromorphic line bundle $\mathcal{M}$ determines a $\ZZ$-local system on $D$, $\mathrm{ord}(\MM)$, yielding the map in cohomology
%\[
%H^1(\OO_X^\times(*D))\overset{\mathrm{ord}_*}{\rightarrow} H^1(D,\ZZ).
%\]
%Furthermore, the sheaf of sections of $\MM$ has a filtration which is labeled by the local system $\mathrm{ord}(\MM)$.
\end{Theorem}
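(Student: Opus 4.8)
The statement has two parts: identifying $\mathrm{Pic}(X,D)$ with $H^1(\OO_X^\times(*D))$, and describing the residual local system via the long exact sequence attached to \eqref{eq:O*(D) ses}. The plan is to treat these in turn, since the bulk of the work for the first part has essentially been done in the preceding discussion.

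For the first part, the argument is the standard one identifying isomorphism classes of locally free rank one modules over a sheaf of rings with the first \v{C}ech cohomology of the sheaf of units. First I would recall that, given a meromorphic line bundle $\MM$, a choice of local trivializations $e_i$ as in \eqref{loctrm} over an open cover $\mathcal{U}$ produces transition functions $g_{ij}\in\OO_X^\times(*D)(U_{ij})$ satisfying the cocycle condition $g_{ij}g_{jk}=g_{ik}$ on triple overlaps, hence a class in $\check H^1(\mathcal{U},\OO_X^\times(*D))$; refining the cover or changing trivializations modifies the cocycle by a coboundary, so the class in $H^1(\OO_X^\times(*D))$ is well defined and depends only on the isomorphism class of $\MM$. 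Conversely, any cocycle $(g_{ij})$ can be used to glue the trivial modules $\OO_X(*D)|_{U_i}$ into a locally free rank one $\OO_X(*D)$-module, and two cocycles differing by a coboundary yield isomorphic modules; this gives the inverse bijection. Finally, the tensor product of $\OO_X(*D)$-modules corresponds to the product of transition cocycles, so the bijection is a group isomorphism, explaining the group structure on $\mathrm{Pic}(X,D)$.

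For the second part, the short exact sequence of sheaves of abelian groups \eqref{eq:O*(D) ses} induces a long exact sequence in sheaf cohomology, which in low degrees reads
\[
\cdots\rightarrow H^0\bigl(\textstyle\oplus_i\ZZ_{D_i}\bigr)\rightarrow H^1(\OO_X^\times)\rightarrow H^1(\OO_X^\times(*D))\overset{\ord_*}{\rightarrow} H^1\bigl(\textstyle\oplus_i\ZZ_{D_i}\bigr)\rightarrow H^2(\OO_X^\times)\rightarrow\cdots,
\]
and since $H^1(\OO_X^\times)=\mathrm{Pic}(X)$ and $H^j(\oplus_i\ZZ_{D_i})=\oplus_i H^j(D_i,\ZZ)=H^j(D,\ZZ)$ (the components $D_i$ being disjoint on each $D_i$, and $H^0(D,\ZZ)=\oplus_i H^0(D_i,\ZZ)$), this is exactly the sequence \eqref{eq: mero line bundle sequence}. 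It remains to check that the connecting-type map $\ord_*$ in this sequence is the one that sends $[\MM]$ to the class of the residual $\ZZ$-local system $\ord(\MM)$ constructed above. This is a matter of tracing through the definition of the induced map on $H^1$: the map $\ord_*$ takes the cocycle $(g_{ij})$ to the $\ZZ$-valued cocycle $(\ord(g_{ij})) = (k_{ij})$ on $D$, and by the explicit gluing description given before the theorem, $(k_{ij})$ is precisely a \v{C}ech cocycle representing the $\ZZ$-local system obtained by gluing $\ZZ|_{D_i}$ to $\ZZ|_{D_j}$ via translation by $k_{ij}$, i.e. $\ord(\MM)$.

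The main obstacle, such as it is, is not deep but bookkeeping: one must be careful that the "residual local system" on $D$ — which is a torsor-like object glued by translations rather than a genuine sheaf of groups — is correctly identified with a class in $H^1(D,\ZZ)$, and that this identification matches the map induced in cohomology by $\ord$. The cleanest way to handle this is to observe that a $\ZZ$-local system of the filtered type described is the same thing as a $\ZZ$-torsor over the constant sheaf $\ZZ_D$, classified by $H^1(D,\ZZ)$, and that the gluing data $k_{ij}$ is literally its classifying cocycle; once this is said, compatibility with $\ord_*$ is immediate from functoriality of \v{C}ech cohomology applied to the sheaf map $\ord$. Everything else is the routine verification that \v{C}ech $H^1$ of a sheaf of abelian groups classifies torsors, which I would state without belaboring.
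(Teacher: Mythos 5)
Your proposal is correct and follows essentially the same route as the paper: the paper gives no separate proof, since the theorem is explicitly a summary of the preceding discussion in which transition cocycles $(g_{ij})$ and the residual local system glued by $k_{ij}=\ord(g_{ij})$ are constructed, and the sequence \eqref{eq: mero line bundle sequence} is just the long exact sequence of \eqref{eq:O*(D) ses}. The only cosmetic caveat is that $H^{j}(D,\ZZ)$ here should be read as $\oplus_i H^{j}(D_i,\ZZ)$ (cohomology of the disjoint union of the irreducible components, i.e.\ of $D^{(1)}$), a convention the paper itself adopts, so your identification matches the intended statement.
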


\begin{Remark} 
Note that the analytic topology is crucial for the existence of interesting meromorphic line bundles: if we work with the Zariski topology, then there are no nontrivial $\ZZ$-local systems on $D$, and so all meromorphic line bundles come from holomorphic line bundles on $X$.
\end{Remark}	

\begin{Theorem}	\label{mero picard extension thm} %\marco{mero line bdls on X,D}
If the analytic Brauer group $H^{2}(\mathcal{O}_{X}^{\times})$ is trivial, then the meromorphic Picard group $\mathrm{Pic}(X,D)$ is an extension of the group of $\mathbb{Z}$-local systems on $D$ by the algebraic Picard group of $U$.\end{Theorem}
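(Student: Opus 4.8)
The plan is to deduce Theorem~\ref{mero picard extension thm} directly from the classification in Theorem~\ref{mero picard group theorem}, by analyzing the long exact sequence~\eqref{eq: mero line bundle sequence} under the hypothesis $H^{2}(\OO_{X}^{\times})=0$. With that vanishing, the sequence gives a short exact sequence
\[
0\rightarrow\mathrm{coker}\bigl(H^{0}(D,\ZZ)\rightarrow\mathrm{Pic}(X)\bigr)\rightarrow\mathrm{Pic}(X,D)\overset{\ord_{*}}{\rightarrow}H^{1}(D,\ZZ)\rightarrow0,
\]
so the first task is to identify the two outer terms with the stated groups.

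First I would treat the kernel term. Since $D$ has $m$ irreducible components $D_1,\dots,D_m$, $H^0(D,\ZZ)=\ZZ^m$, and the map to $\mathrm{Pic}(X)$ sends the generator supported on $D_i$ to the class $\OO_X(D_i)$; this is the connecting-type map in the cohomology of~\eqref{eq:O*(D) ses} restricted to degree~$0$, i.e. the composite $H^0(D,\ZZ)\to H^1(\OO_X^\times)=\mathrm{Pic}(X)$. Hence the cokernel is $\mathrm{Pic}(X)/\langle\OO_X(D_1),\dots,\OO_X(D_m)\rangle$, which is precisely the Picard group of the open complement $U=X\setminus D$: this is the standard excision/localization exact sequence $\ZZ^m\to\mathrm{Pic}(X)\to\mathrm{Pic}(U)\to 0$ for Weil/Cartier divisors on a smooth variety. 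Since $X$ is projective and $U$ is open, $\mathrm{Pic}(U)$ in the analytic category need not agree with the algebraic Picard group in general, but the paper's intended statement refers to the algebraic $\mathrm{Pic}(U)$; I would note that $\mathrm{Pic}(X)$ is algebraic by GAGA, that the divisor classes $\OO_X(D_i)$ are algebraic, and that the localization sequence is compatible with analytification, so the cokernel is the algebraic $\mathrm{Pic}(U)$. (If a purely analytic reading is wanted, one instead identifies the cokernel with the analytic $\mathrm{Pic}(U)$ and remarks that these agree here because the relevant classes come from $\mathrm{Pic}(X)$.)

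Next I would handle the cokernel term: surjectivity of $\ord_*:\mathrm{Pic}(X,D)\to H^1(D,\ZZ)$ is exactly the statement that the next map $H^1(D,\ZZ)\to H^2(\OO_X^\times)$ is zero, which holds since $H^2(\OO_X^\times)=0$ by hypothesis. Interpreting $H^1(D,\ZZ)$ as the group of $\ZZ$-local systems on $D$ (equivalently, $\mathrm{Hom}(\pi_1(D),\ZZ)$ on each component, with the caveat that $H^1$ also records the abelianized fundamental group — I would just say "$\ZZ$-local systems on $D$" matching the paper's terminology, as $H^1(D,\ZZ)=\mathrm{Hom}(H_1(D,\ZZ),\ZZ)$ classifies them), and recalling from the discussion preceding Theorem~\ref{mero picard group theorem} that $\ord_*$ sends a meromorphic line bundle to its residual local system $\ord(\MM)$, we conclude that $\mathrm{Pic}(X,D)$ is an extension
\[
0\rightarrow\mathrm{Pic}(U)\rightarrow\mathrm{Pic}(X,D)\rightarrow H^1(D,\ZZ)\rightarrow 0,
\]
as claimed.

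The main obstacle is the identification of the kernel with the \emph{algebraic} Picard group of $U$: this requires being careful about analytic versus algebraic categories, invoking GAGA on the projective variety $X$ together with the divisor-theoretic localization sequence and checking it is compatible with analytification. The rest — extracting the short exact sequence from~\eqref{eq: mero line bundle sequence} given $H^2(\OO_X^\times)=0$, and reading off $\ord_*$ as the residual-local-system map — is immediate from the results already established. I would also briefly remark that the extension is generally non-split, pointing forward to the explicit constructions in Section~\ref{sec 4}.
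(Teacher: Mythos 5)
Your proposal is correct and follows essentially the same route as the paper: extract the short exact sequence from~\eqref{eq: mero line bundle sequence} using the vanishing of $H^{2}(\OO_{X}^{\times})$, and identify $\mathrm{coker}(H^{0}(D,\ZZ)\rightarrow\mathrm{Pic}(X))$ with $\mathrm{Pic}^{\mathrm{alg}}(U)$ via the divisor localization sequence (the paper phrases this as the Zariski-topology analogue of~\eqref{eq:O*(D) ses}, where $H^{1}(D_{\mathrm{alg}},\ZZ)=0$) together with GAGA. The only cosmetic difference is that you make the connecting map $D_i\mapsto\OO_X(D_i)$ explicit, which the paper leaves implicit.
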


\begin{proof}
We consider the following short exact sequence in the Zariski topology
\[
0\rightarrow\mathcal{O}_{X,\mathrm{alg}}^{\times}\rightarrow j_{*}\mathcal{O}_{U,\mathrm{alg}}^{\times}\rightarrow i_{*}\mathbb{Z}_{D,\mathrm{alg}}\rightarrow0
\]
where $j:U\hookrightarrow X$ and $i:D\hookrightarrow X$. The associated long exact sequence is:
\begin{equation}	\label{eq: algebraic Picard seq}
\dots\rightarrow H^{0}(D,\ZZ)\rightarrow\mathrm{Pic}(X)\rightarrow\mathrm{Pic}^{\mathrm{alg}}(U)\rightarrow H^{1}(D,\ZZ)\rightarrow\dots
\end{equation}
The cohomology group $H^{1}(D,\ZZ)$ is zero since we are working in the Zariski topology here. Therefore, the algebraic Picard group $\mathrm{Pic}^{\mathrm{alg}}(U)$ is the cokernel of the map from $H^{0}(D,\mathbb{Z})$ to $\mathrm{Pic}(X)$. Since both $D$ and $X$ satisfy the GAGA principle, we can rewrite the long exact sequence in \eqref{eq: mero line bundle sequence} as:
\begin{equation}	\label{eq: mero line bundle sequence II}
0\rightarrow\mathrm{Pic}^{\mathrm{alg}}(U)\rightarrow\mathrm{Pic}(X,D)\rightarrow H^{1}(D,\ZZ)\rightarrow H^{2}(\mathcal{O}_{X}^{\times})
\end{equation}
If the analytic Brauer group $H^{2}(\mathcal{O}_{X}^{\times})$ is trivial, we obtain the required extension:
\[
0\rightarrow\mathrm{Pic}^{\mathrm{alg}}(U)\rightarrow\mathrm{Pic}(X,D)\rightarrow H^{1}(D,\ZZ)\rightarrow0.
\]
\end{proof}
%
%We now provide a class of examples which admit a non-trivial meromorphic Picard group $\mathrm{Pic}(X,D)$. 
%The examples will all share the property that $D$ is a smooth irreducible divisor in $X$ and $X$ has trivial analytic Brauer group.
\begin{Example}	\label{curve in P2 mero}
Consider $(\mathbb{P}^{2},D)$ where $D$ is a smooth irreducible projective curve of degree $d\geq3$ as in Example \ref{curve in P2 PA}. The group of $\mathbb{Z}$-local systems on $D$ is $\mathbb{Z}^{2g}$ where $g$ is the genus of $D$. Moreover, the algebraic Picard group of the complement $U$ is given by:
\[
\mathrm{Pic}^{\mathrm{alg}}(U)=\mathbb{Z}_{d}
\]
From Theorem \ref{mero picard extension thm}, we obtain:
\[
\mathrm{Pic}(\mathbb{P}^{2},D)\cong\mathbb{Z}_{d}\oplus\mathbb{Z}^{2g}
\]
In Section~\ref{sec 4}, we will give an explicit construction of these meromorphic line bundles for the case where $D$ is a smooth cubic curve in $\PP^2$.
\end{Example}

%In the above example, the meromorphic Picard group admits a torsion subgroup which arises from the algebraic Picard group of the complement. The following is an example where torsion does not appear.
%
%\begin{Example} \label{cubic surface mero}
%Consider $(X,D)$ where $X$ is a smooth projective cubic surface and $D$ an elliptic curve in $X$ as in Example \ref{cubic surface PA}. Since $D$ is smooth and irreducible, the algebraic Picard group of the complement $U$ can be computed from the short exact sequence:
%\[
%0\rightarrow\mathbb{Z}\rightarrow\mathrm{Pic}(X)\rightarrow\mathrm{Pic}^{\mathrm{alg}}(U)\rightarrow0
%\]
%The Picard group of $X$ is $\mathbb{Z}^{7}$ and so we obtain $\mathrm{Pic}^{\mathrm{alg}}(U)\cong\mathbb{Z}^{6}$. Since $H^{1}(D,\mathbb{Z})\cong\mathbb{Z}^{2}$, we can use Theorem \ref{mero picard extension thm} to obtain:
%\[
%\mathrm{Pic}(X,D)\cong\mathbb{Z}^{8}
%\]
%and so we have an 8-dimensional family of isomorphism classes of meromorphic line bundles on $(X,D)$. 
%\end{Example}
%
%The meromorphic Picard group can be strictly torsion as well as indicated in this next example.
%
%\begin{Example}	\label{conic in P2 mero}
%Consider $(\mathbb{P}^{2},C)$ where $C$ is a smooth irreducible conic curve in $\mathbb{P}^{2}$. The algebraic Picard group of the complement $U$ is given by:
%\[
%\mathrm{Pic}^{\mathrm{alg}}(U)\cong\mathbb{Z}_{2}
%\]
%As $H^{1}(C,\mathbb{Z})=0$, we obtain:
%\[
%\mathrm{Pic}(\mathbb{P}^{2},C)\cong\mathbb{Z}_{2}
%\]
%\end{Example}

\subsection{The total space of a meromorphic line bundle} \label{local description subsec}
We now provide a geometric description of the total space of a meromorphic line bundle for the case where $D$ is a smooth divisor on $X$. First, we describe how this is done for the case where $X=\CC$ and $D=\left\{0\right\}$. 

Consider a family $(\CC^2_i)_{i\in\ZZ}$ of affine planes indexed by the integers and let $(x_i,y_i)$ be affine coordinates on $\CC^2_i$.  Let $\sim$ be the equivalence relation on $\coprod_{i\in \ZZ} \CC^2_i$ generated by the identification: 
\begin{equation}
%(x_{i+1},y_{i+1}) = (x_i^2 y_i, x_i^{-1}),\ \ \text{for}\ x_i\neq 0.
(x_{i+1},y_{i+1}) = (y_i^{-1}, y_i^2 x_i),\ \ \text{for}\ y_i\neq 0.
\end{equation}
Quotienting by this relation, we obtain a smooth complex surface $M$:
\begin{equation}
M_0 :=  \left.\coprod_{i\in \ZZ} \CC^2_i \middle/\sim\right. .
\end{equation}
Since the map $(x_i,y_i)\mapsto x_iy_i$ is preserved by the equivalence relation, it defines a surjective holomorphic map $\pi_0:M\to\CC$.  We also have a natural $\CC^*$ action on $M_0$ preserving the projection $\pi_0$, defined by:
\begin{equation}
\lambda \cdot (x_i,y_i) = (\lambda x_i, \lambda^{-1}y_i).
\end{equation}

The fibre of $\pi_0$ over any nonzero point $z\in \CC$ may be identified with the hyperbola $\{(x_i,y_i)\in\CC^2_i \ :\ x_iy_i = z\}$ for any fixed $i$, so that the restriction of $(M_0,\pi_0)$ to $\CC\setminus\{0\}$ defines a (trivial) principal $\CC^*$ bundle.  On the other hand, the fibre of $\pi_0$ over $0\in\CC$ is 
\begin{equation}
\pi_0^{-1}(0)= \left.\coprod_{i\in\ZZ} \{(x_i,y_i)\ :\ x_iy_i = 0\} \middle/ \sim,  \right.
\end{equation}
which is an infinite chain of rational curves with simple normal crossings, depicted below. 

\begin{figure}[h]
\centering
\begin{tikzpicture}
%\node at (-6,.1) {$\pi^{-1}(0)=$};
\draw (0,0) arc (40:140:1cm);
\draw (1.3,0) arc (40:140:1cm);
\draw (-1.3,0) arc (40:140:1cm);
\draw (2.6,0) arc (40:140:1cm);
\draw (-2.6,0) arc (40:140:1cm);
\node at (3,.1) {$\cdots$};
\node at (-4.5,.1) {$\cdots$};
\end{tikzpicture}
\end{figure}

The singular points of the above chain coincide with the fixed points of the $\CC^*$ action on $M_0$, given by the collection $(x_i)_{i\in\ZZ}$ where $x_i = (0,0)\in\CC^2_i$.  

\begin{Theorem}
Let $\pi_0:M_0\to \CC$ be the holomorphic map defined above.  Then the sheaf of automorphisms of $(M_0,\pi_0)$ respecting the $\CC^{*}$-action on $M_0$ is given by $\OO_{X}^{\times}(*D)$, where $(X,D) = (\CC,\{0\})$.
\end{Theorem}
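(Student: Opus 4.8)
The plan is to identify the sheaf of $\CC^*$-equivariant automorphisms of $(M_0,\pi_0)$ over an open set $V\subseteq\CC$ by working chart-by-chart and tracking what data such an automorphism records. First I would observe that on the locus $V\setminus\{0\}$ the bundle $(M_0,\pi_0)$ is a trivial principal $\CC^*$-bundle, so a $\CC^*$-equivariant automorphism covering the identity on $V\setminus\{0\}$ is the same as multiplication by a nowhere-vanishing holomorphic function $h\in\OO_X^\times(V\setminus\{0\})$; the content of the theorem is that the automorphisms which extend across $0$ are exactly those $h$ which, as sections of $\OO_X(*D)$, are invertible there, i.e. $h = u\cdot z^k$ with $u\in\OO_X^\times$ and $k\in\ZZ$ near $0$. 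So the real work is local near the fibre $\pi_0^{-1}(0)$.

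Next I would set up the local computation: near $0$ an automorphism $\Phi$ must permute the irreducible components of the chain of rational curves $\pi_0^{-1}(0)$ and must commute with the $\CC^*$-action, hence preserve the fixed-point set $(x_i)_{i\in\ZZ}$. Using the explicit gluing $(x_{i+1},y_{i+1}) = (y_i^{-1}, y_i^2 x_i)$, I would argue that the $\CC^*$-equivariant holomorphic automorphisms of the chain that cover the identity on the base are generated by two types: (a) multiplication by a holomorphic unit $u(z)$, which in the chart $\CC^2_i$ acts by $(x_i,y_i)\mapsto(u(x_iy_i)\,x_i,\ y_i)$ — one checks this is compatible with $\sim$; and (b) the ``shift'' automorphism sending $\CC^2_i$ to $\CC^2_{i+1}$ via the gluing map itself, which corresponds precisely to multiplication by the coordinate $z$ on the base (it rescales the $\CC^*$-torsor structure by the unit $z$ away from $0$ but is a genuine biholomorphism of $M_0$). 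Composing, a general equivariant automorphism near $0$ is $u(z)\,z^k$ for $u\in\OO_X^\times$, $k\in\ZZ$, which is exactly a local section of $\OO_X^\times(*D)$ by its description $hf^{k}$ with $f=z$. The map $\Phi\mapsto(\text{the function }h\text{ it induces on }V\setminus\{0\})$ then gives the claimed sheaf isomorphism, since it is injective (an equivariant automorphism is determined by its restriction to the dense open $\pi_0^{-1}(V\setminus\{0\})$) and surjective by the construction just described; naturality in $V$ is immediate, so we get an isomorphism of sheaves $\underline{\mathrm{Aut}}_{\CC^*}(M_0,\pi_0)\cong\OO_X^\times(*D)$.

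Finally I would note the general-$(X,D)$-smooth case follows because $(M_0,\pi_0)$ for a smooth divisor $D$ is built by the same local model transverse to $D$ (a product of the $\CC$-model with a polydisc in $D$-directions), and a parametrized version of the above chart computation applies verbatim. The main obstacle I anticipate is the bookkeeping in step two: verifying carefully that the list of equivariant automorphisms is \emph{complete} — that no automorphism can act on the chain $\pi_0^{-1}(0)$ by a nontrivial shift-plus-something exotic, and that equivariance genuinely forces the normal form $(x_i,y_i)\mapsto(u(x_iy_i)x_i,y_i)$ on each chart with the $u$'s across charts being compatible and hence descending to a single unit $u$ on the base. This rigidity comes from the interplay between the $\CC^*$-weights $(+1,-1)$ on $(x_i,y_i)$ and the explicit transition functions, and writing it out correctly — rather than any deep idea — is where the care is needed.
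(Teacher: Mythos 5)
Your overall strategy coincides with the paper's: identify the automorphism over the punctured disc with a nonvanishing holomorphic function $g$ on $\CC\setminus\{0\}$, then show that extendability across the degenerate fibre forces $g = u(z)z^k$. But the decisive step --- the completeness claim that every $\CC^{*}$-equivariant automorphism of $(M_0,\pi_0)$ is a composite of a unit and a power of the ``shift'' --- is exactly the point you defer (``I would argue\dots'', ``the main obstacle I anticipate\dots''), and you never supply the mechanism that proves it. The paper's argument for this step is short but is the entire content of the theorem: the automorphism must send the node $(x_0,y_0)=(0,0)$ of the chain to some node $(x_k,y_k)=(0,0)$, hence defines a germ of a biholomorphism between neighbourhoods of these two fixed points; rewriting the automorphism in the chart $\CC^2_k$ via the gluing $(x_k,y_k)=(z^{-k}x_0,\,z^{k}y_0)$ shows that it is $(z^{-k}g(z)x_0,\ z^{k}g(z)^{-1}y_0)$, and holomorphy of the germ (with invertible Jacobian at the origin) forces $z^{-k}g(z)$ to extend to a nonvanishing holomorphic function at $0$. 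Without some such argument your proof only establishes that $\OO_{X}^{\times}(*D)$ \emph{injects} into the automorphism sheaf, not that it exhausts it; noting that the automorphism permutes the components of the chain and preserves the fixed points is the right starting point, but by itself it does not rule out more exotic behaviour of $g$ near $z=0$ (e.g.\ an essential singularity), which is precisely what the germ argument excludes.

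A second, smaller problem: your normal form for the action of a unit, $(x_i,y_i)\mapsto(u(x_iy_i)\,x_i,\ y_i)$, does not preserve $\pi_0$, since $\pi_0=x_iy_i$ gets multiplied by $u$. The correct torsor action, compatible with the weights $(+1,-1)$ and with the fibration, is $(x_i,y_i)\mapsto(u(z)x_i,\ u(z)^{-1}y_i)$. With that correction, your items (a) and (b) do exhibit $\OO_{X}^{\times}(*D)$ as a subsheaf of the equivariant automorphisms --- the easy inclusion --- and the reduction to the local model near $D$ in the general smooth case is fine; it is the reverse inclusion that still needs the fixed-point computation above.
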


%The sheaf of automorphisms of $(M,\pi)$ respecting the $\CC^*$ action is $\mathcal{O}_{X}^\times(* D)$, where $D = \{0\}\subset X=\CC$.  An element of $\mathcal{O}_{X}^\times(*D)$ of order $k$ such as $z^k$ then acts on the chain above by translation by $k$ units.

\begin{proof}
Away from $\pi_0^{-1}(0)$, an automorphism $\phi$ is a principal bundle automorphism, and so is given in any of the above local charts $\CC^2_i$ by 
\[
\phi(x_i,y_i) = (g(z)x_i, g(z)^{-1} y_i),
\]
where $z=\pi_0(x_i,y_i) = x_iy_i$ and $g$ is a nonvanishing holomorphic function on $\CC\backslash\{0\}$. We must now investigate the behaviour of $g(z)$ as $z$ approaches zero.

The fixed point $(x_0,y_0)=(0,0)$ must be sent to another one by the automorphism $\phi$, say $(x_k,y_k)=(0,0)$. So the automorphism defines a germ of a holomorphic map sending the origin of $\CC^2_0$ to the origin of $\CC^2_k$, meaning that
\[
\phi(x_0,y_0) = (\alpha(x_0,y_0), \beta(x_0,y_0)) 
\]  
for $(\alpha,\beta)$ holomorphic and with nonzero Jacobian at zero. 

The equivalence relation $\sim$ identifies $(x_0,y_0)$ with 
%$(x_k,y_k) =(x_0^{k+1}y_0^k, x_0^{-k}y_0^{-k+1}) = (z^k x_0, z^{-k} y_0)$,
$(x_k,y_k) =(y_0^{-k}x_0^{-k+1},y_0^{k+1}x_0^k) = (z^{-k} x_0, z^{k} y_0)$,
 and so takes $(g(z) x_0, g(z)^{-1}y_0)$ to $(z^{-k} g(z) x_0, z^{k} g(z)^{-1} y_0)$ in the $\CC^2_k$ chart.  As a result we have
\[
(\alpha(x_0,y_0), \beta(x_0,y_0)) = (z^{-k} g(z) x_0, z^{k} g(z)^{-1} y_0).
\]
This means that $z^{-k} g(z)$ extends to a holomorphic nonvanishing function in a neighbourhood of zero, that is, $g(z) = e^f z^{k}$ for $f$ holomorphic, showing that $g\in \mathcal{O}_{X}^\times(*D)$ as required. 
\end{proof}

\begin{Corollary}
For a smooth divisor $D\subset X$, a meromorphic $\CC^*$ principal bundle on $(X,D)$ may be defined as a complex manifold $M$ equipped with a surjective holomorphic map $\pi:M\to X$ and a $\CC^*$ action preserving $\pi$, such that over $X\backslash D$ it is a principal $\CC^*$ bundle, and near a point $p\in D$, it has the local normal form $(M_0,\pi_0)$ given above. 
\end{Corollary}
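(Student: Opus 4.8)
The plan is to deduce this from the preceding theorem on automorphisms of the local model $(M_0,\pi_0)$ via the standard \v{C}ech-gluing dictionary: once the sheaf of $\CC^*$-equivariant symmetries of the local model over its base is identified with $\OO_X^\times(*D)$, the objects in the statement are precisely the manifolds assembled from copies of the local model along a cocycle valued in $\OO_X^\times(*D)$, and by Theorem~\ref{mero picard group theorem} this is exactly the \v{C}ech data of a meromorphic line bundle.

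First I would record the parametrized form of the preceding theorem needed when $D$ is smooth of codimension one in an $n$-dimensional $X$. A neighbourhood of a point of $D$ is biholomorphic to $\Delta\times\Delta^{n-1}$ with $D=\{0\}\times\Delta^{n-1}$, and the local normal form over it is $(M_0\times\Delta^{n-1},\pi_0\times\mathrm{id})$. Rerunning the proof of the theorem with holomorphic dependence on the transverse coordinates $w\in\Delta^{n-1}$ --- the singular fibre over $D$ being $\coprod_i\{x_iy_i=0\}\times\Delta^{n-1}$, its $\CC^*$-fixed locus the disjoint union of the sections $\{(0,0)\}_i\times\Delta^{n-1}$, and the index $k$ of the section onto which a given one is sent being locally constant in $w$ --- shows that the sheaf of $\CC^*$-equivariant automorphisms of the local normal form over its base is again $\OO_X^\times(*D)$. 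One also checks that this local model is natural under biholomorphic changes of the coordinates on $\Delta\times\Delta^{n-1}$ preserving $D$, so that ``the local model over $V$'' is well defined for $V$ a small enough open of $X$.

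Next, given a manifold $(M,\pi)$ as in the statement, I would choose an open cover $\{U_i\}$ of $X$ together with $\CC^*$-equivariant isomorphisms $\phi_i\colon M|_{U_i}\xrightarrow{\sim}(\text{local model over }U_i)$ covering $\mathrm{id}_{U_i}$: over $U_i\subset X\setminus D$ using the principal bundle structure (refining so each such $U_i$ is trivializing, the model there being $U_i\times\CC^*$), and over $U_i$ meeting $D$ using the assumed local normal form. On overlaps $\phi_i\phi_j^{-1}$ is a $\CC^*$-equivariant automorphism of the local model over $U_i\cap U_j$, hence by the parametrized theorem is a section $g_{ij}\in\OO_X^\times(*D)(U_i\cap U_j)$; note that an overlap of a chart over $X\setminus D$ with a normal-form chart lies inside $X\setminus D$, where $g_{ij}$ is just a unit of $\OO_X$. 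The $g_{ij}$ form a cocycle, well defined up to coboundary and depending only on the isomorphism class of $(M,\pi)$, so we obtain a class in $H^1(\OO_X^\times(*D))$; conversely, gluing copies of the local model over the $U_i$ along a cocycle $(g_{ij})\in\check{Z}^1(\{U_i\},\OO_X^\times(*D))$ produces such an $(M,\pi)$, and these constructions are mutually inverse on isomorphism classes. Hence isomorphism classes of the geometric objects biject with $H^1(\OO_X^\times(*D))=\mathrm{Pic}(X,D)$, and since $(g_{ij})$ is precisely the transition cocycle of the meromorphic line bundle it defines in Theorem~\ref{mero picard group theorem}, the total space of a meromorphic line bundle $\MM$ is such an $M$; conversely $\MM$ is recovered from $M$ as the sheaf whose sections over $V$ are the weight-one $\CC^*$-equivariant functions on $\pi^{-1}(V)$ that are holomorphic over $V\setminus D$ and meromorphic with poles along $\pi^{-1}(D)$.

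The step I expect to be the main obstacle is the parametrized automorphism computation, together with the bookkeeping that an arbitrary $(M,\pi)$ is genuinely covered by these standard charts in a way compatible on overlaps --- this is where the naturality of the local model under base coordinate changes is used, and where one must check that a chart over $X\setminus D$ and a normal-form chart near $D$ interact only through $\OO_X^\times(*D)$ restricted to $X\setminus D$. If the intrinsic recovery of $\MM$ from $M$ is to be made explicit, there is an additional local check: on the chart $\CC^2_i\times\Delta^{n-1}$ a weight-one function of the above type has the form $a(z)z^i x_i$ with $a\in\OO_X(*D)$, so that the resulting sheaf is locally free of rank one over $\OO_X(*D)$.
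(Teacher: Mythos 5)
Your proposal is correct and follows exactly the route the paper intends: the Corollary is stated without proof as an immediate consequence of the preceding theorem identifying the $\CC^*$-equivariant automorphism sheaf of $(M_0,\pi_0)$ with $\OO_X^\times(*D)$, and your parametrized version of that theorem plus the standard \v{C}ech-gluing dictionary (transition data in $\OO_X^\times(*D)$ $\leftrightarrow$ classes in $H^1(\OO_X^\times(*D))=\mathrm{Pic}(X,D)$) is precisely the omitted argument, consistent with the paper's clarifying remark that near $p\in D$ the bundle is isomorphic to $(q^*M_0,q^*\pi_0)$. Your intrinsic recovery of $\MM$ as the weight-one equivariant functions meromorphic along $\pi^{-1}(D)$ is a correct and worthwhile addition that the paper does not spell out.
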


To be more precise, near a point $p\in D$, let $U$ be a neighbourhood of $p$ in which $(U,D\cap U)\cong (\D^{n-1}\times \D, \D^{n-1}\times\{0\})$, for $\D\subset \CC$ the open unit disc, and let $q:U\to \D$ be the second projection.  Then $(M,\pi)$ is isomorphic to $(q^*M_0, q^*\pi_0)$ when restricted to $U$.

Any usual principal $\CC^*$-bundle $P$ gives rise to a meromorphic principal bundle in the following way. Let $L$ be the complex line bundle associated to $P$, and let $\overline{L} = \mathbb{P}(L\oplus \OO_X)$ be its completion to a projective line bundle.   Then let $S = S_0 + S_\infty$ be the union of the zero and infinity sections of $\overline{L}$.   We construct a family of varieties by iterated blow-up: first blow up the intersection of $S$ with the restriction $L_D$ of $L$ to $D$, each  viewed as a submanifold of the total space of $\overline{L}$.  By iteratively blowing up the intersections of the proper transform of $S$ with the exceptional divisor, we obtain a sequence $(\widetilde M_k)_{k\geq 0}$ of varieties.  Deleting the proper transform of $S$ from each $\widetilde M_k$, we obtain a sequence $(M_k)_{k\geq 0}$ of varieties equipped with inclusion maps $M_k\hookrightarrow M_{k+1}$.  Taking the direct limit of this system of varieties, we obtain a complex manifold (non-finite-type scheme) which is naturally a meromorphic principal bundle as defined above.

The functor described above, which maps usual principal bundles to meromorphic ones, categorifies the natural map 
\[
H^1(X,\mathcal{O}_{X}^\times)\to H^1(X,\mathcal{O}_{X}^\times(*D))
\]
which derives from the natural inclusion of automorphism sheaves.

\subsection{Integral mixed Hodge theory}
In Section \ref{sec 2}, we introduced the theory of weight filtrations in order to obtain classification results for log Picard algebroids. Here, we will introduce a modification of Deligne's weight filtration for integral coefficients to obtain classification results for meromorphic line bundles. 

Let $j:U\hookrightarrow X$ be the inclusion of the complement $U:=X\backslash D$ into $X$ and consider the integral cohomology group $H^{k}(U,\ZZ)$. This can be computed using the complex $Rj_{*}\ZZ_{U}$:
\[
H^{k}(U,\ZZ)=\mathbb{H}^{k}(Rj_{*}\ZZ_{U})
\]
We now define the integral weight filtration on $H^{k}(U,\ZZ)$ by using the canonical filtration $\sigma$ introduced earlier in Section \ref{sec 2} on the complex $Rj_{*}\mathbb{Z}_{U}$. We define the $\textit{\textbf{integral weights}}$ $W_{m}H^{k}(U,\mathbb{Z})$ as:
\begin{equation}	\label{eq:integral weights}
W_{m}H^{k}(U,\mathbb{Z}):=\mathrm{im}(\mathbb{H}^{k}(\sigma_{\leq m-k}Rj_{*}\mathbb{Z}_{U})\rightarrow H^{k}(U,\mathbb{Z}))
\end{equation}
This gives the $\textit{\textbf{integral weight filtration}}$ on $H^{k}(U,\mathbb{Z})$:
\[
H^{k}(U,\mathbb{Z})=W_{2k}\supset W_{2k-1}\supset\dots\supset W_{k}\supset\{0\}
\]
Consider now the Leray spectral sequence with respect to the canonical filtration $\sigma$:
\begin{equation}	\label{eq:integral Leray I}
E_{2}^{k-m,m}=H^{k-m}(R^{m}j_{*}\mathbb{Z}_{U})\Rightarrow H^{k}(U,\mathbb{Z})
\end{equation}
By \cite[Lemma 4.9]{PetersSteenbrink08}, we have:
\begin{equation}	\label{eq:RM=ZD}
R^{m}j_{*}\mathbb{Z}_{U}=\alpha_{m*}\mathbb{Z}_{D^{(m)}}(-m)
\end{equation}
where $D^{(m)}$ is the disjoint union of all $m$-fold intersections of the different irreducible components of the divisor $D$ as before, $\alpha_{m}:D^{(m)}\hookrightarrow X$ is the natural inclusion, and $\mathbb{Z}_{D^{(m)}}(-m)$ on $D^{(m)}$ is the Tate twist of $\ZZ$ by $(2\pi i)^{-m}$.
Hence, this allows us to re-write the Leray spectral sequence in \eqref{eq:integral Leray I} as:
\[
E_{2}^{k-m,m}=H^{k-m}(D^{(m)},\mathbb{Z})(-m)\Rightarrow H^{k}(U,\mathbb{Z})
\]
This may be viewed as the integral analogue of the weight spectral sequence in \eqref{eq: weight spectral sequence}.

We now apply the theory of integral weight filtrations to study meromorphic line bundles on $(X,D)$. From \cite[2-6]{Brylinski94}, there is an exact sequence of sheaves
\[
0\rightarrow j_{*}\mathbb{Z}(1)_{U}\rightarrow\mathcal{O}_{X}\overset{\exp}{\rightarrow}\mathcal{O}_{X}^{\times}(*D)\rightarrow R^{1}j_{*}\mathbb{Z}(1)_{U}\rightarrow0
\]
where $\mathbb{Z}(1)$ is the Tate twist of $\ZZ$ by $2\pi i$. This implies that the sheaf $\mathcal{O}_{X}^{\times}(*D)$ is quasi-isomorphic to the mapping cone:
\begin{equation}	\label{eq: cone for OX(D)}
\mathcal{O}_{X}^{\times}(*D)\sim\mathrm{Cone}(\sigma_{\leq1}\mathbb{R}j_{*}\mathbb{Z}(1)_{U}\rightarrow\mathcal{O}_{X})
\end{equation}
We can write the long exact sequence associated to this mapping cone:
\begin{equation}	\label{eq:cone for OX(D) les}
\begin{split}
\dots & \rightarrow\mathbb{H}^{1}(\sigma_{\leq 1}Rj_{*}\ZZ(1)_{U})\rightarrow H^{1}(\OO_{X})\overset{\exp}{\rightarrow}\mathrm{Pic}(X,D) \\
& \rightarrow\mathbb{H}^{2}(\sigma_{\leq 1}Rj_{*}\ZZ(1)_{U})\rightarrow H^{2}(\OO_{X})\rightarrow\dots
\end{split}
\end{equation}
We would like to make a couple of observations here. From the definition of integral weights \eqref{eq:integral weights}, we have:
\[
W_{3}H^{2}(U,\ZZ(1)):=\mathrm{im}(\mathbb{H}^{2}(\sigma_{\leq 1}Rj_{*}\mathbb{Z}(1)_{U})\rightarrow H^{2}(U,\ZZ(1)))
\]
If we use the following natural short exact sequence for the canonical filtration $\sigma$:
\begin{equation}	\label{eq:canonical filtration ses}
0\rightarrow\sigma_{\leq 1}Rj_{*}\mathbb{Z}(1)_{U}\rightarrow Rj_{*}\mathbb{Z}(1)_{U}\rightarrow\sigma_{>1}Rj_{*}\ZZ(1)_{U}\rightarrow0
\end{equation}
with its associated long exact sequence in hypercohomology
\begin{equation}	\label{eq:W3 into H2}
\dots\rightarrow\mathbb{H}^{1}(\sigma_{>1}Rj_{*}\mathbb{Z}(1)_{U})\rightarrow\mathbb{H}^{2}(\sigma_{\leq 1}Rj_{*}\ZZ(1)_{U})\rightarrow H^{2}(U,\ZZ(1))\rightarrow\dots
\end{equation}
Since $\mathbb{H}^{1}(\sigma_{>1}Rj_{*}\mathbb{Z}(1)_{U})=0$, this shows that the map
\[
\mathbb{H}^{2}(\sigma_{\leq 1}Rj_{*}\ZZ(1)_{U})\rightarrow H^{2}(U,\ZZ(1))
\]
is necessarily injective and so the weight 3 piece $W_{3}H^{2}(U,\ZZ(1))$ can be naturally identified with $\mathbb{H}^{2}(\sigma_{\leq 1}Rj_{*}\mathbb{Z}(1)_{U})$:
\begin{equation} \label{eq:W3 as hyperH2}
W_{3}H^{2}(U,\ZZ(1))\cong\mathbb{H}^{2}(\sigma_{\leq 1}Rj_{*}\mathbb{Z}(1)_{U})
\end{equation}
The second observation is that if we use the long exact sequence in hypercohomology associated to the short exact sequence in \eqref{eq:canonical filtration ses} again, we have:
\begin{equation*}
\begin{split}
\dots & \rightarrow\mathbb{H}^{0}(\sigma_{>1}Rj_{*}\mathbb{Z}(1)_{U})\rightarrow\mathbb{H}^{1}(\sigma_{\leq 1}Rj_{*}\ZZ(1)_{U})\rightarrow H^{1}(U,\ZZ(1)) \\
& \rightarrow\mathbb{H}^{1}(\sigma_{>1}Rj_{*}\mathbb{Z}(1)_{U})\rightarrow\dots
\end{split}
\end{equation*}
But $\mathbb{H}^{0}(\sigma_{>1}Rj_{*}\mathbb{Z}(1)_{U})$ and $\mathbb{H}^{1}(\sigma_{>1}Rj_{*}\mathbb{Z}(1)_{U})$ are zero so this implies that we have an isomorphism:
\[
\mathbb{H}^{1}(\sigma_{\leq 1}Rj_{*}\ZZ(1)_{U})\cong H^{1}(U,\ZZ(1))
\]
These two observations allows us to rewrite the sequence in \eqref{eq:cone for OX(D) les} as:
\begin{equation}	\label{eq:cone for OX(D) les II}
\dots\rightarrow H^{1}(U,\ZZ(1))\rightarrow H^{1}(\OO_{X})\overset{\exp}{\rightarrow}\mathrm{Pic}(X,D)\rightarrow W_{3}H^{2}(U,\ZZ(1))\rightarrow H^{2}(\OO_{X})\rightarrow\dots
\end{equation}
We may think of this sequence as a meromorphic analogue of the usual exponential sheaf sequence on $X$.

\begin{Definition}
We define the \textbf{\textit{meromorphic N\'eron-Severi group}} $\mathrm{NS}(X,D)$ of $(X,D)$ as the following subgroup of $H^{2}(U,\ZZ(1))$:
\[
\mathrm{NS}(X,D) = F^{1}H^{2}(U,\CC)\times_{H^{2}(U,\CC)} W_{3}H^{2}(U,\ZZ(1))\subset H^{2}(U,\ZZ(1)).
\]
\end{Definition}

\begin{Definition}
We define the following quotient group
\[
\frac{H^{1}(\OO_{X})}{\text{im}(H^{1}(U,\ZZ(1))\rightarrow H^{1}(\OO_X))}
\]
to be the \textbf{\textit{meromorphic Jacobian}} $\mathrm{Jac}(X,D)$ of $(X,D)$. The map $H^{1}(U,\ZZ(1))\rightarrow H^{1}(\OO_X)$ is the map given in the sequence in \eqref{eq:cone for OX(D) les II}.
\end{Definition}

\begin{Proposition}
Suppose that $H^{1}(U,\ZZ(1))$ is pure of weight 1, i.e.
\[
W_{1}H^{1}(U,\ZZ(1))=H^{1}(U,\ZZ(1)),
\] 
then the meromorphic Jacobian $\mathrm{Jac}(X,D)$ is isomorphic to the usual Jacobian of $X$.
\end{Proposition}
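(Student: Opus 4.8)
The plan is to identify both $\mathrm{Jac}(X,D)$ and the usual Jacobian of $X$ as quotients of $H^{1}(\OO_{X})$ --- the first by $\mathrm{im}(\phi)$, where $\phi\colon H^{1}(U,\ZZ(1))\to H^{1}(\OO_{X})$ is the map in \eqref{eq:cone for OX(D) les II}, and the second by the image of $H^{1}(X,\ZZ(1))$ under the ordinary exponential map --- and then to show that the purity hypothesis forces these two images to coincide.

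First I would reinterpret the hypothesis. Since $D$ has simple normal crossings, the complement of $D$ in a small polydisc is connected, so $\sigma_{\leq 0}Rj_{*}\ZZ_{U}=j_{*}\ZZ_{U}=\ZZ_{X}$ and hence $\mathbb{H}^{1}(\sigma_{\leq 0}Rj_{*}\ZZ(1)_{U})=H^{1}(X,\ZZ(1))$. Under the identification $\mathbb{H}^{1}(\sigma_{\leq 1}Rj_{*}\ZZ(1)_{U})\cong H^{1}(U,\ZZ(1))$ established above, the map induced by the truncation inclusion $\sigma_{\leq 0}\hookrightarrow\sigma_{\leq 1}$ becomes the restriction map $r\colon H^{1}(X,\ZZ(1))\to H^{1}(U,\ZZ(1))$, and by the definition of the integral weights \eqref{eq:integral weights} its image is precisely $W_{1}H^{1}(U,\ZZ(1))$. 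Thus the assumption $W_{1}H^{1}(U,\ZZ(1))=H^{1}(U,\ZZ(1))$ is equivalent to the surjectivity of $r$.

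Next I would produce a morphism of distinguished triangles comparing the ordinary exponential sequence with the meromorphic one. The exponential sequence identifies $\OO_{X}^{\times}\simeq\mathrm{Cone}(\ZZ(1)_{X}\to\OO_{X})$, while \eqref{eq: cone for OX(D)} identifies $\OO_{X}^{\times}(*D)\simeq\mathrm{Cone}(\sigma_{\leq 1}Rj_{*}\ZZ(1)_{U}\to\OO_{X})$; the truncation inclusion $\ZZ(1)_{X}=\sigma_{\leq 0}Rj_{*}\ZZ(1)_{U}\hookrightarrow\sigma_{\leq 1}Rj_{*}\ZZ(1)_{U}$ together with $\mathrm{id}_{\OO_{X}}$ induces a map of cones, and I would check that this map is the natural inclusion $\OO_{X}^{\times}\hookrightarrow\OO_{X}^{\times}(*D)$ (this uses $j_{*}\ZZ(1)_{U}=\ZZ(1)_{X}$ and the compatibility of the four-term sequence of \cite{Brylinski94} with the classical exponential sequence). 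Passing to the associated long exact hypercohomology sequences and invoking functoriality of the connecting homomorphisms then yields a commutative square whose top arrow is the ordinary exponential map $H^{1}(X,\ZZ(1))\to H^{1}(\OO_{X})$, whose bottom arrow is $\phi$, whose left arrow is $r$, and whose right arrow is the identity on $H^{1}(\OO_{X})$.

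Finally I would combine the two steps: from surjectivity of $r$ and commutativity of the square, $\mathrm{im}(\phi)=\mathrm{im}(\phi\circ r)=\mathrm{im}\bigl(H^{1}(X,\ZZ(1))\to H^{1}(\OO_{X})\bigr)$, so
\[
\mathrm{Jac}(X,D)=\frac{H^{1}(\OO_{X})}{\mathrm{im}(\phi)}=\frac{H^{1}(\OO_{X})}{\mathrm{im}\bigl(H^{1}(X,\ZZ(1))\to H^{1}(\OO_{X})\bigr)}=\mathrm{Pic}^{0}(X),
\]
the usual Jacobian of $X$. I expect the main obstacle to be the second step --- verifying that the natural inclusion $\OO_{X}^{\times}\hookrightarrow\OO_{X}^{\times}(*D)$ really is the morphism induced on mapping cones by the canonical inclusion of canonical truncations, so that the comparison square genuinely commutes; once that is settled, everything else is formal diagram-chasing.
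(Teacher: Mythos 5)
Your proposal is correct and follows essentially the same route as the paper: the key point in both is that $\sigma_{\leq 0}Rj_{*}\ZZ(1)_{U}=\ZZ(1)_{X}$, so $W_{1}H^{1}(U,\ZZ(1))$ is the image of $H^{1}(X,\ZZ(1))$ and purity identifies $H^{1}(U,\ZZ(1))$ with $H^{1}(X,\ZZ(1))$. The only difference is that you explicitly verify, via the morphism of mapping cones induced by $\sigma_{\leq 0}\hookrightarrow\sigma_{\leq 1}$, that the map to $H^{1}(\OO_{X})$ then agrees with the ordinary exponential map --- a compatibility the paper leaves implicit, and which your argument correctly supplies.
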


\begin{proof}
It suffices to show that $W_{1}H^{1}(U,\ZZ(1))$ is equal to $H^{1}(X,\ZZ(1))$. By definition:
\[
W_{1}H^{1}(U,\ZZ(1)):=\mathrm{im}(\mathbb{H}^{1}(\sigma_{\leq 0}Rj_{*}\ZZ(1)_{U})\rightarrow H^{1}(U,\ZZ(1)))
\]
If we use the natural short exact sequence for the canonical filtration $\sigma$: 
\[
0\rightarrow\sigma_{\leq 0}Rj_{*}\mathbb{Z}(1)_{U}\rightarrow Rj_{*}\mathbb{Z}(1)_{U}\rightarrow\sigma_{>0}Rj_{*}\ZZ(1)_{U}\rightarrow0
\]
This gives the long exact sequence in hypercohomology:
\[
\dots\rightarrow\mathbb{H}^{0}(\sigma_{>0}Rj_{*}\ZZ(1)_{U})\rightarrow\mathbb{H}^{1}(\sigma_{\leq 0}Rj_{*}\mathbb{Z}(1)_{U})\rightarrow\mathbb{H}^{1}(U,\ZZ(1))\rightarrow\dots
\]
Since $\mathbb{H}^{0}(\sigma_{>0}Rj_{*}\ZZ(1)_{U})=0$, this implies that $W_{1}H^{1}(U,\ZZ(1))$ can be naturally identified with the hypercohomology group $\mathbb{H}^{1}(\sigma_{\leq 0}Rj_{*}\mathbb{Z}(1)_{U})$. Using the definition of the canonical filtration in addition with the statement in \eqref{eq:RM=ZD}, we have:
\[
\sigma_{\leq 0}Rj_{*}\mathbb{Z}(1)_{U}= R^{0}j_{*}\ZZ(1)_{U}= \ZZ(1)_{X}
\]
This shows that $W_{1}H^{1}(U,\ZZ(1))$ is equal to $H^{1}(X,\ZZ(1))$.
\end{proof}

We have the following classification theorem for the meromorphic Picard group $\mathrm{Pic}(X,D)$:

\begin{Theorem}
The first Chern class of meromorphic line bundles $\MM$ on $(X,D)$ restricted to the complement $U=X\setminus D$ expresses the meromorphic Picard group $\mathrm{Pic}(X,D)$ as an extension of the meromorphic N\'eron-Severi group $\mathrm{NS}(X,D)$ by the meromorphic Jacobian $\mathrm{Jac}(X,D)$:
\[
0\rightarrow\mathrm{Jac}(X,D)\rightarrow\mathrm{Pic}(X,D)\overset{c_{1}}{\rightarrow}\mathrm{NS}(X,D)\rightarrow0
\]
\end{Theorem}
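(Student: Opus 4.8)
The plan is to combine the meromorphic exponential sequence \eqref{eq:cone for OX(D) les II} with the Hodge-theoretic identifications already established, and then extract the desired short exact sequence by a diagram chase. The starting point is the four-term segment
\[
H^{1}(U,\ZZ(1))\rightarrow H^{1}(\OO_{X})\overset{\exp}{\rightarrow}\mathrm{Pic}(X,D)\overset{\ord_*\text{-refined}}{\rightarrow} W_{3}H^{2}(U,\ZZ(1))\rightarrow H^{2}(\OO_{X}),
\]
where I am using \eqref{eq:W3 as hyperH2} to identify the third map's target with the integral weight-3 piece. From exactness at $\mathrm{Pic}(X,D)$, the kernel of the map to $W_3H^2(U,\ZZ(1))$ is precisely the image of $\exp$, which by exactness at $H^1(\OO_X)$ is the quotient $H^1(\OO_X)/\mathrm{im}(H^1(U,\ZZ(1)))$, i.e.\ the meromorphic Jacobian $\mathrm{Jac}(X,D)$ by definition. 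This immediately gives exactness of $0\to \mathrm{Jac}(X,D)\to \mathrm{Pic}(X,D)\to \mathrm{NS}(X,D)$ once I know the image of $c_1$ lands in $\mathrm{NS}(X,D)$ and surjects onto it.

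The core of the argument is therefore identifying $\mathrm{im}(c_1)$ inside $W_3H^2(U,\ZZ(1))$ with $\mathrm{NS}(X,D) = F^1H^2(U,\CC)\times_{H^2(U,\CC)} W_3H^2(U,\ZZ(1))$. First I would observe that, just as in the classical case, the composition of $c_1$ with the map $W_3H^2(U,\ZZ(1))\to H^2(U,\CC)$ lands in $F^1H^2(U,\CC)$: this follows because the complexification of the map $\mathrm{Pic}(X,D)\to W_3H^2(U,\ZZ(1))\to H^2(U,\CC)$ factors (via the quasi-isomorphism \eqref{eq: cone for OX(D)} and the analogous complex-coefficient cone) through $\HH^2(F^1\Omega^\bullet_X(\log D))$, or more directly because $c_1$ of a meromorphic line bundle is represented by the curvature of a meromorphic connection, a closed logarithmic $2$-form, hence lies in $F^1$. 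So $\mathrm{im}(c_1)\subset \mathrm{NS}(X,D)$ automatically. For surjectivity onto $\mathrm{NS}(X,D)$, exactness at $W_3H^2(U,\ZZ(1))$ in \eqref{eq:cone for OX(D) les II} says $\mathrm{im}(c_1) = \ker(W_3H^2(U,\ZZ(1))\to H^2(\OO_X))$. The point is then that an integral class $\alpha\in W_3H^2(U,\ZZ(1))$ lies in $F^1H^2(U,\CC)$ if and only if its image in $H^2(\OO_X)$ vanishes; this is because the map $W_3H^2(U,\ZZ(1))\to H^2(\OO_X)$ factors as $W_3H^2(U,\ZZ(1))\to H^2(U,\CC)\to H^2(U,\CC)/F^1H^2(U,\CC)$, and the last group injects into $H^2(\OO_X)$ (indeed is isomorphic to it, by $E_1$-degeneration of the log Hodge--de Rham spectral sequence, exactly as in the proof of Theorem~\ref{loc trivial log PA Hodge thm}). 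Hence $\ker(W_3H^2(U,\ZZ(1))\to H^2(\OO_X)) = F^1H^2(U,\CC)\cap W_3H^2(U,\ZZ(1)) = \mathrm{NS}(X,D)$, giving the surjectivity and the final exactness $\mathrm{Pic}(X,D)\overset{c_1}{\to}\mathrm{NS}(X,D)\to 0$.

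I expect the main obstacle to be verifying carefully that the connecting map $\mathrm{Pic}(X,D)\to W_3H^2(U,\ZZ(1))$ coming from the cone sequence \eqref{eq:cone for OX(D) les II} genuinely computes (the restriction to $U$ of) the first Chern class in the sense claimed — i.e.\ reconciling the sheaf-theoretic boundary map with the topological $c_1$ of the underlying holomorphic $\CC^*$-bundle on $U$ obtained from Section~\ref{local description subsec}. This requires tracking the Tate twists and checking compatibility of the Brylinski-type resolution of $\OO_X^\times(*D)$ with the ordinary exponential sequence on $U$ under restriction $j^*$; the comparison is functorial but needs the identification $R^1j_*\ZZ(1)_U \cong$ the relevant quotient to be threaded through consistently. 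Once that identification is in place, the rest is the formal diagram chase outlined above, together with the Hodge-filtration computation of $H^2(\OO_X)$ as $H^2(U,\CC)/F^1$, which is already implicit in the proof of Theorem~\ref{loc trivial log PA Hodge thm} and in the short exact sequence \eqref{eq: F2 short exact sequence}.
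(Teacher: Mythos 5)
Your proposal is correct and follows essentially the same route as the paper: both extract the short exact sequence from the mapping-cone long exact sequence \eqref{eq:cone for OX(D) les II}, identify the kernel with $\mathrm{Jac}(X,D)$ by exactness, and identify the image inside $W_{3}H^{2}(U,\ZZ(1))$ with the fibre product $\mathrm{NS}(X,D)$ by factoring the map to $H^{2}(\OO_{X})$ through $H^{2}(U,\CC)\rightarrow H^{2}(U,\CC)/F^{1}H^{2}(U,\CC)$, using $E_{1}$-degeneration. Your closing remark about reconciling the cone boundary map with the topological $c_{1}$ is a fair point of care that the paper itself does not belabour, but it does not change the argument.
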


\begin{proof}
Using the long exact sequence in \eqref{eq:cone for OX(D) les II}, we obtain:
\[
0\rightarrow\mathrm{im}(\mathrm{Pic}(X,D)\rightarrow W_{3}H^{2}(U,\ZZ(1)))\rightarrow W_{3}H^{2}(U,\ZZ(1))\rightarrow H^{2}(\OO_{X})\rightarrow\dots
\]
The map $W_{3}H^{2}(U,\ZZ(1))\rightarrow H^{2}(\OO_{X})$ factors through the following composition of maps:
\[
W_{3}H^{2}(U,\ZZ(1))\rightarrow H^{2}(U,\mathbb{C})\rightarrow H^{2}(\OO_{X})
\]
where the first map above is the composition of the inclusion of $W_{3}H^{2}(U,\ZZ(1))\hookrightarrow H^{2}(U,\ZZ(1))$ and the canonical map from $H^{2}(U,\ZZ(1))\rightarrow H^{2}(U,\mathbb{C})$. The second map $H^{2}(U,\mathbb{C})\rightarrow H^{2}(\OO_{X})$ results from taking the quotient of $H^{2}(U,\mathbb{C})$ by the first Hodge filtration $F^{1}H^{2}(U,\mathbb{C})$. Hence, we have a commutative diagram:
\[
\xymatrix{
& \mathrm{im}(\mathrm{Pic}(X,D)\rightarrow W_{3}H^{2}(U,\ZZ(1)))\ar[r]\ar[d] & W_{3}H^{2}(U,\ZZ(1))\ar[d] & & \\
0\ar[r] & F^{1}H^{2}(U,\mathbb{C})\ar[r] & H^{2}(U,\mathbb{C})\ar[r] & H^{2}(\OO_{X})\ar[r]& 0}
\]
Thus, the image of $\mathrm{Pic}(X,D)$ in $W_{3}H^{2}(U,\ZZ(1))$ is exactly the fibre product $F^{1}H^{2}(U,\CC)\times_{H^{2}(U,\CC)} W_{3}H^{2}(U,\ZZ(1))$ which is the definition of the meromorphic N\'eron-Severi group $\mathrm{NS}(X,D)$. Now, if we use the long exact sequence in \eqref{eq:cone for OX(D) les II} again, we find that the kernel of $\mathrm{Pic}(X,D)\rightarrow W_{3}H^{2}(U,\ZZ(1))$ is the group:
\[
\frac{H^{1}(\OO_{X})}{\mathrm{im}(H^{1}(U,\ZZ(1))\rightarrow H^{1}(\OO_{X})}
\]
which is exactly the definition of the meromorphic Jacobian $\mathrm{Jac}(X,D)$.
\end{proof}

We conclude this subsection by proving an equivalence between the meromorphic Picard group on $(X,D)$ and the analytic Picard group of the complement $U$ under specific conditions.

\begin{Corollary} \label{mero line bundle=H2(U,Z)}
Let $X$ be a smooth projective variety with vanishing Hodge numbers $h^{0,1}=0, h^{0,2}=0$ and $D$ a smooth ample divisor in $X$. There is an isomorphism between the meromorphic Picard group $\mathrm{Pic}(X,D)$ and the analytic Picard group $\mathrm{Pic}(U^{an})$ of the complement $U$:
\[
\mathrm{Pic}(X,D)\cong\mathrm{Pic}(U^{an})
\]
\end{Corollary}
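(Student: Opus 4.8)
The plan is to identify both $\mathrm{Pic}(X,D)$ and $\mathrm{Pic}(U^{\mathrm{an}})$ with the integral cohomology group $H^{2}(U,\ZZ(1))$ via first Chern classes, and then to conclude that the restriction map $\MM\mapsto\MM|_{U}$ between them is an isomorphism.

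I would start on the $X$ side. Substituting the hypotheses $h^{0,1}=0$ and $h^{0,2}=0$, that is $H^{1}(\OO_{X})=0$ and $H^{2}(\OO_{X})=0$, into the exact sequence~\eqref{eq:cone for OX(D) les II} collapses it to an isomorphism
\[
\mathrm{Pic}(X,D)\;\overset{\sim}{\longrightarrow}\;W_{3}H^{2}(U,\ZZ(1)),
\]
which, as in the proof of the preceding theorem, sends $\MM$ to $c_{1}(\MM|_{U})$. (Equivalently: $\mathrm{Jac}(X,D)=0$ since $H^{1}(\OO_{X})=0$, and $\mathrm{NS}(X,D)=W_{3}H^{2}(U,\ZZ(1))$ since $F^{1}H^{2}(U,\CC)=H^{2}(U,\CC)$ when $H^{2}(\OO_{X})=0$.) Next I would observe that smoothness of $D$ forces $W_{3}H^{2}(U,\ZZ(1))=H^{2}(U,\ZZ(1))$: because $D$ is smooth its distinct irreducible components do not meet, so $D^{(m)}=\emptyset$ and hence $R^{m}j_{*}\ZZ_{U}=0$ for $m\geq 2$ by~\eqref{eq:RM=ZD}; therefore $\sigma_{\leq 1}Rj_{*}\ZZ(1)_{U}\to Rj_{*}\ZZ(1)_{U}$ is a quasi-isomorphism, and then~\eqref{eq:W3 as hyperH2} yields $W_{3}H^{2}(U,\ZZ(1))=\mathbb{H}^{2}(Rj_{*}\ZZ(1)_{U})=H^{2}(U,\ZZ(1))$. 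Combining, the assignment $\MM\mapsto c_{1}(\MM|_{U})$ is an isomorphism $\mathrm{Pic}(X,D)\overset{\sim}{\to}H^{2}(U,\ZZ(1))$.

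On the $U$ side I would use ampleness: $D$ ample implies $U=X\setminus D$ is an affine variety, so $U^{\mathrm{an}}$ is Stein and $H^{q}(U,\OO_{U}^{\mathrm{an}})=0$ for $q\geq 1$ by Cartan's Theorem~B. The analytic exponential sequence on $U$ then exhibits $c_{1}\colon\mathrm{Pic}(U^{\mathrm{an}})\to H^{2}(U,\ZZ(1))$ as an isomorphism. Since the composite $\mathrm{Pic}(X,D)\to\mathrm{Pic}(U^{\mathrm{an}})\overset{c_{1}}{\to}H^{2}(U,\ZZ(1))$ is the isomorphism of the previous paragraph and its second factor is an isomorphism, the restriction map $\mathrm{Pic}(X,D)\to\mathrm{Pic}(U^{\mathrm{an}})$ is itself an isomorphism, which is the assertion.

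The homological bookkeeping with~\eqref{eq:cone for OX(D) les II} is routine, and the Stein argument on $U$ is standard. The step that deserves genuine care—and which I expect to be the main (if mild) obstacle—is the integral statement $W_{3}H^{2}(U,\ZZ(1))=H^{2}(U,\ZZ(1))$: the two-step description of the weight filtration for a smooth divisor recorded earlier in Remark~\ref{smooth two step weight remark} was stated only with complex coefficients, so it has to be re-derived with integral coefficients directly from the vanishing of the higher direct images $R^{m}j_{*}\ZZ_{U}$, as above. Once that is in place, everything else is formal.
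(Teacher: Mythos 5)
Your proposal is correct and follows essentially the same route as the paper: vanishing of $H^{1}(\OO_{X})$ and $H^{2}(\OO_{X})$ collapses the meromorphic exponential sequence to $\mathrm{Pic}(X,D)\cong W_{3}H^{2}(U,\ZZ(1))$, smoothness of $D$ kills $R^{2}j_{*}\ZZ_{U}$ so that $W_{3}H^{2}(U,\ZZ(1))=H^{2}(U,\ZZ(1))$, and affineness of $U$ plus the exponential sequence identifies $\mathrm{Pic}(U^{\mathrm{an}})$ with the same group. The only cosmetic difference is that you phrase the weight-filtration step as a quasi-isomorphism $\sigma_{\leq 1}Rj_{*}\ZZ(1)_{U}\to Rj_{*}\ZZ(1)_{U}$, whereas the paper extracts the vanishing of the cokernel from the long exact sequence~\eqref{eq:W3 into H2}; both rest on the same vanishing of higher direct images.
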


\begin{proof}
The assumption $H^{1}(\OO_X)=0$ implies that the meromorphic Jacobian $\mathrm{Jac}(X,D)=0$. Moreover, the assumption $H^{2}(\OO_X)=0$ implies that $F^{1}H^{2}(U,\CC)=H^{2}(U,\CC)$ and so the meromorphic N\'eron-Severi group $\mathrm{NS}(X,D)=W_{3}H^{2}(U,\ZZ(1))$. Using the above theorem, we obtain the following isomorphism
\[
\mathrm{Pic}(X,D)\cong W_{3}H^{2}(U,\ZZ(1))
\] 
Now, if we use the exact sequence in \eqref{eq:W3 into H2}, we find that the cokernel of the map:
\begin{equation}	\label{eq:W3 into W4}
W_{3}H^{2}(U,\ZZ(1))\rightarrow H^{2}(U,\ZZ(1))
\end{equation}
is precisely the kernel of the following map:
\[
\mathbb{H}^{2}(\sigma_{>1}Rj_{*}\ZZ(1)_{U})\rightarrow\mathbb{H}^{3}(\sigma_{\leq 1}Rj_{*}\ZZ(1))
\]
However, the group $\mathbb{H}^{2}(\sigma_{>1}Rj_{*}\ZZ(1)_{U})=H^{0}(R^{2}j_{*}\ZZ(1)_{U})$ and since $D$ was assumed to be a smooth divisor, this group is necessarily zero. Hence, the cokernel of the map in \eqref{eq:W3 into W4} is zero and thus,
\[
\mathrm{Pic}(X,D)\cong H^{2}(U,\ZZ(1))
\]
To conclude the proof, we need to show that the analytic Picard group $\mathrm{Pic}(U^{an})$ is also isomorphic to $H^{2}(U,\ZZ(1))$. Consider the long exact sequence arising from the exponential sheaf sequence on $U$:
\[
\dots\rightarrow H^{1}(\mathcal{O}_{U})\rightarrow\mathrm{Pic}(U^{an})\rightarrow H^{2}(U,\mathbb{Z}(1))\rightarrow H^{2}(\mathcal{O}_{U})\rightarrow\dots
\]
Since $U$ is the complement of a smooth ample divisor in $X$, $U$ must be an affine variety. Therefore, the higher cohomologies of coherent sheaves necessarily vanish, yielding
\[
\mathrm{Pic}(U^{an})\cong H^{2}(U,\mathbb{Z}(1)).
\]
\end{proof}

\subsection{Geometric Prequantization}
In this section we explain how meromorphic line bundles provide the appropriate notion of prequantization for logarithmic Picard algebroids. 

\subsubsection{Prequantization of Picard algebroids}

The Atiyah algebroid $\AA_{\LL}$ of a holomorphic line bundle $\mathcal{L}$ over $X$ is the sheaf of first-order differential operators on sections of $\mathcal{L}$.  It has a Lie bracket given by the commutator of operators, and the symbol map $\xi\mapsto \sigma_{\xi}$ defines a bracket-preserving morphism $\AA_{\LL}\rightarrow\mathcal{T}_{X}$ to the tangent sheaf with kernel $\OO_{X}$ given by the differential operators of order zero. In this way, the Atiyah algebroid $\AA_{\LL}$ gives an example of a Picard algebroid on $X$. 

%\begin{Proposition}
%The sheaf $\mathcal{D}_{X}^{1}(\mathcal{L},\mathcal{L})$ admits the structure of a Picard algebroid on $X$.
%\end{Proposition}
%
%
%
%From now on, we will call $\mathrm{At}(\LL)$ the Picard algebroid of infinitesimal symmetries of $\LL$ and denote it as $\AA_{\LL}$. 

\begin{Proposition}	\label{dlog PA prop}
Let $\LL$ be a holomorphic line bundle and let $\AA_\LL$ be its associated Atiyah algebroid.  Then $\dlog_*[\LL] = [\AA_\LL]$, where $[\mathcal{L}]\in H^1(\OO_X^\times)$ and $[\AA_\LL]\in H^1(\Omega^{1,\mathrm{cl}}_X)$ are the corresponding isomorphism classes and $\dlog_*$ is induced by the homomorphism of sheaves of abelian groups
\[
\xymatrix{\OO_X^\times\ar[r]^-{\dlog}& \Omega_X^{1,\mathrm{cl}}}.
\]
%
%be the isomorphism classes of the holomorphic line bundle $L$ and its associated Picard algebroid, respectively.  
%
% $\mathcal{L}$ be a holomorphic line bundle on $X$ and $\AA_{\LL}$ its Atiyah algebroid. Suppose that $\{g_{ij}\}\in H^{1}(\OO_{X}^{\times})$ are the transition functions of $\LL$ and $[\AA_{\LL}]\in H^{1}(\Omega_{X}^{1,\mathrm{cl}})$ is the class of $\AA_{\LL}$, then, we have $[\AA_{\LL}]=\text{dlog}(\{g_{ij}\})$ where $\text{dlog}$ is the map in cohomology:
%\[
%H^{1}(\OO_{X}^{\times})\overset{\text{dlog}}{\rightarrow} H^{1}(\Omega_{X}^{1,\mathrm{cl}})
%\]
\end{Proposition}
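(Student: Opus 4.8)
The plan is to compute both isomorphism classes from the same local data on an open cover and check that they agree. Specifically, I would choose an open cover $\{U_i\}$ of $X$ trivializing $\LL$, with transition functions $g_{ij}\in\OO_X^\times(U_i\cap U_j)$, so that the \v{C}ech $1$-cocycle $(g_{ij})$ represents $[\LL]\in H^1(\OO_X^\times)$. Applying $\dlog$ componentwise, the class $\dlog_*[\LL]\in H^1(\Omega^{1,\mathrm{cl}}_X)$ is represented by the cocycle $(\dlog g_{ij}) = (g_{ij}^{-1}dg_{ij})$, which is indeed a \v{C}ech $1$-cocycle of closed holomorphic $1$-forms since $\dlog$ is a group homomorphism into $\Omega^{1,\mathrm{cl}}_X$ and $d\circ\dlog = 0$.

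Next I would identify the class $[\AA_\LL]$ via the same cover. Over each $U_i$, the local trivialization of $\LL$ induces a flat connection $\nabla_i$ on $\AA_\LL$, namely the splitting sending $\xi$ to the Lie derivative of sections in the chosen local frame; its curvature vanishes, so $\AA_\LL$ is locally trivial and, as in the proof of Theorem~\ref{classification theorem} and Proposition~\ref{loc trivial PA cohomology} applied with $D=\varnothing$, the class $[\AA_\LL]\in H^1(\Omega^{1,\mathrm{cl}}_X)$ is represented by the \v{C}ech $1$-cocycle $A_{ij} = \nabla_i - \nabla_j$. The core computation is then to show $A_{ij} = \dlog g_{ij}$: if $s_i$ is the local frame over $U_i$, then $s_j = g_{ij}^{-1}s_i$ (or $g_{ij}s_i$ depending on convention), and for a section $t = f_i s_i = f_j s_j$ one has $f_j = g_{ij} f_i$, so the difference of the two Lie-derivative splittings applied to $\xi$ acts on a section by multiplication by $\xi(g_{ij})/g_{ij} = \dlog g_{ij}(\xi)$, as an order-zero operator; this is exactly the statement $A_{ij} = \dlog g_{ij}$ as a section of $\Omega^1_X$. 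Since both cocycles coincide, the classes agree.

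The main obstacle is bookkeeping rather than conceptual: one must be careful about sign and inverse conventions in the definition of transition functions and of the Atiyah-algebroid splitting, and about the precise identification between a splitting of the Atiyah sequence and a local frame-induced connection, so that the \v{C}ech differences land in $\Omega^{1,\mathrm{cl}}_X$ and not merely in $\Omega^1_X$ (closedness of $\dlog g_{ij}$ takes care of this). A clean way to package the argument without chasing operators is to observe that $\dlog: \OO_X^\times \to \Omega^{1,\mathrm{cl}}_X$ is precisely the map of sheaves of abelian groups induced on automorphism sheaves by the assignment $\LL\mapsto\AA_\LL$, i.e.\ the derivative-of-multiplication homomorphism; functoriality of the connecting/classifying map in \v{C}ech cohomology then gives $\dlog_*[\LL]=[\AA_\LL]$ immediately. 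I would present the explicit cocycle computation as the proof, with this functorial remark as the conceptual summary.
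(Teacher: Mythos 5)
Your proposal is correct and follows essentially the same route as the paper: both use local trivializations to define local flat connections $\nabla_i$, identify $[\AA_\LL]$ with the \v{C}ech cocycle $A_{ij}=\nabla_i-\nabla_j$, and verify $A_{ij}=\dlog g_{ij}$ by a direct computation with the transition functions. The only cosmetic difference is that the paper derives the identity by expanding $0=\nabla_j(e_j)=\nabla_j(e_i g_{ij})$, whereas you compute the difference of the two splittings acting on a general section; these are the same calculation.
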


\begin{proof}
Choose local trivializations $(e_i)_{i\in I}$ for $\LL$ over an open affine cover $\mathcal{U}=(U_i)_{i\in I}$ of $X$, so that $e_j = e_i g_{ij}$ defines a \v{C}ech representative $(g_{ij})\in \check{{Z}}^1(\mathcal{U},\OO_X^\times)$ for the isomorphism class $[\LL]\in H^1(\OO_X^\times)$. Each local trivialization $e_i$ defines a local flat connection $\nabla_i$ by imposing $\nabla_i(e_i) = 0$, giving a local splitting of the sequence of algebroids
\[
0\rightarrow\OO_{X}\rightarrow\AA\rightarrow\mathcal{T}_{X}\rightarrow 0.
\]
Then over $U_i\cap U_j$, we have $\nabla_i - \nabla_j = A_{ij}$, defining a representative $(A_{ij})\in \check{{Z}}^1(\mathcal{U}, \Omega^{1,\mathrm{cl}}_X)$ for the isomorphism class $[\AA_\LL]$.  Applying $\nabla_j$ to $e_j$, we obtain
\[
0 = \nabla_j(e_j) = \nabla_j(e_i g_{ij}) =  (dg_{ij}) e_i + ((\nabla_i - A_{ij})e_i) g_{ij} = (dg_{ij}-A_{ij} g_{ij}) e_i,
\]
implying $A_{ij} = \dlog(g_{ij})$, as required. We omit the verification that a different choice of local trivialization affects the cocycle $(A_{ij})$ by a coboundary.  
\end{proof}

%
%First, we consider an element $\xi\in \mathcal{A}_{\mathcal{L}}$. By definition, $\xi$ is a first-order differential operator on $\mathcal{L}$ so we have:
%\[
%\xi(fs)=f\xi(s)+\sigma_{\xi}(f)s
%\]
%where $\sigma_{\xi}\in\mathcal{T}_{X}$, $f\in\OO_{X}$, $s\in\mathcal{L}$. Now, we choose an open affine cover $\{U_i\}$ of $X$ and take two local splittings $s_{i},s_{j}$ of the short exact sequence 
%\[
%0\rightarrow\OO_{X}\rightarrow\AA\rightarrow\mathcal{T}_{X}\rightarrow0
%\]
%On this open cover, we can also trivialize the section $s$ as $\{e_i\}$ with $e_{i}=g_{ij}e_{j}$ and where $g_{ij}$ are the transition functions of $\mathcal{L}$.
%
%For the open set $U_{i}$, we have the following:
%\[
%\xi_{i}(fe_{i})=f\xi_{i}(e_i)+\sigma_{\xi_{i}}(f)e_i
%\]
%where $\xi_{i}$ is the restriction of $\xi$ to $U_{i}$. We can take $\xi_{i}(e_i)=0$ which implies that $\xi_{i}(fe_{i})=\sigma_{\xi_{i}}(f)e_i$. Analogously, on the open set $U_{j}$, we have $\xi_{j}(fe_{j})=\sigma_{\xi_{j}}(f)e_j$. Therefore, on the double intersection $U_{ij}$, we have:
%\begin{equation*}
%\begin{split}
%\xi_{j}-\xi_{i}(e_i) & = \xi_{j}(e_i) \\
%& = \xi_{j}(g_{ij}e_{j}) \\
%& = \sigma_{\xi_{ij}}(g_{ij})e_{j} \\
%& = \sigma_{\xi_{ij}}(g_{ij})g_{ij}^{-1}e_{i} \\
%& = i_{\sigma_{\xi_{ij}}}\mathrm{dlog}\{g_{ij}\}e_{i} 
%\end{split}
%\end{equation*}
%Thus, we have shown that $\xi_{j}-\xi_{i}=i_{\sigma_{\xi_{ij}}}\mathrm{dlog}\{g_{ij}\}$. 
%\end{proof}

The prequantization problem is the following: under what conditions may a Picard algebroid $\AA$ on $X$ be prequantized, that is,  when does there exist a holomorphic line bundle $\LL$ on $X$ such that $\AA\cong\AA_{\LL}$?  We now review Weil's well-known answer to this question, which is the starting point for the theory of geometric quantization.

\begin{Theorem} \cite[Ch. V, no. 4, Lemme 2]{Weil58}	\label{PA prequantization theorem}
Let $\AA$ be a Picard algebroid on $X$. There exists a prequantization for $\AA$ if and only if it has integral periods, meaning that $[\AA]\in H^{1}(\Omega_{X}^{1,\text{cl}})\subset H^{2}(X,\CC)$ lies in the image of the map $H^{2}(X,\ZZ)\rightarrow H^{2}(X,\CC)$.
\end{Theorem}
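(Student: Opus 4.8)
The plan is to deduce the theorem from a single statement about the homomorphism
\[
\dlog_*\colon H^1(\OO_X^\times)\rightarrow H^1(\Omega_X^{1,\mathrm{cl}})
\]
induced by the sheaf map $\dlog$ of Proposition~\ref{dlog PA prop}: its image consists precisely of those classes whose image in $H^2(X,\CC)$, under the de Rham inclusion $H^1(\Omega_X^{1,\mathrm{cl}})\hookrightarrow H^2(X,\CC)$, lies in the image of $H^2(X,\ZZ)\rightarrow H^2(X,\CC)$. Granting this, the theorem follows at once: ordinary Picard algebroids on $X$ are classified by $H^1(\Omega_X^{1,\mathrm{cl}})$ (the case $D=\emptyset$ of Theorem~\ref{classification theorem}), so a Picard algebroid $\AA$ is prequantizable if and only if $[\AA]=[\AA_\LL]$ for some holomorphic line bundle $\LL$, and by Proposition~\ref{dlog PA prop} this holds if and only if $[\AA]\in\mathrm{im}(\dlog_*)$; the displayed characterization of $\mathrm{im}(\dlog_*)$ then says this happens exactly when the de Rham class of $[\AA]$ is integral. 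The de Rham inclusion $H^1(\Omega_X^{1,\mathrm{cl}})\hookrightarrow H^2(X,\CC)$ is injective by degeneration of the Hodge--de Rham spectral sequence, so this class is well defined, as is implicit in the statement of the theorem.

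To analyse $\mathrm{im}(\dlog_*)$ I would play two short exact sequences of sheaves of abelian groups off against each other. The first is the multiplicative sequence
\[
0\rightarrow\CC_X^\times\rightarrow\OO_X^\times\overset{\dlog}{\rightarrow}\Omega_X^{1,\mathrm{cl}}\rightarrow 0,
\]
whose exactness follows from the holomorphic Poincar\'e lemma: locally every closed holomorphic $1$-form is $\dlog$ of a unit, and $\dlog$ kills exactly the locally constant units. Its long exact sequence identifies $\mathrm{im}(\dlog_*)$ with $\ker\bigl(\partial\colon H^1(\Omega_X^{1,\mathrm{cl}})\rightarrow H^2(X,\CC_X^\times)\bigr)$. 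The second is the additive sequence $0\rightarrow\CC_X\rightarrow\OO_X\overset{d}{\rightarrow}\Omega_X^{1,\mathrm{cl}}\rightarrow 0$. Since $\dlog\circ\exp=d$, the exponential maps constitute a morphism from the additive sequence to the multiplicative one which is the identity on $\Omega_X^{1,\mathrm{cl}}$; comparing the resulting long exact sequences gives $\partial=\exp_*\circ\delta$, where $\delta\colon H^1(\Omega_X^{1,\mathrm{cl}})\rightarrow H^2(X,\CC)$ is the connecting map of the additive sequence.

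The crux, and the step demanding the most care, is to identify $\delta$, up to sign, with the de Rham inclusion $H^1(\Omega_X^{1,\mathrm{cl}})\hookrightarrow H^2(X,\CC)$. I would do this by resolving $\Omega_X^{1,\mathrm{cl}}$, via the Poincar\'e lemma, by the complex $[\Omega_X^1\rightarrow\Omega_X^2\rightarrow\cdots]$, so that $H^1(\Omega_X^{1,\mathrm{cl}})=\HH^2(X,\Omega_X^{\geq 1})$ and the additive sequence becomes quasi-isomorphic to the short exact sequence of complexes $0\rightarrow\Omega_X^{\geq 1}\rightarrow\Omega_X^\bullet\rightarrow\OO_X\rightarrow 0$, with $\Omega_X^\bullet$ resolving $\CC_X$; the connecting map of the latter is visibly the natural map $\HH^2(X,\Omega_X^{\geq 1})\rightarrow\HH^2(X,\Omega_X^\bullet)=H^2(X,\CC)$. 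Alternatively, one checks this on \v{C}ech--de Rham representatives in the spirit of Proposition~\ref{dlog PA prop}: a class represented by closed forms $A_{ij}=df_{ij}$ on a suitable cover has de Rham class the \v{C}ech $2$-cocycle $f_{ij}+f_{jk}+f_{ki}$, which is exactly $\delta([A_{ij}])$. One must also keep careful track of the Tate twist here: $\CC^\times\cong\CC/2\pi i\,\ZZ$, so the exponential sequence of constant sheaves reads $0\rightarrow\ZZ(1)_X\rightarrow\CC_X\overset{\exp}{\rightarrow}\CC_X^\times\rightarrow 0$ with $\ZZ(1)\cong\ZZ$, and its long exact sequence gives $\ker\bigl(\exp_*\colon H^2(X,\CC)\rightarrow H^2(X,\CC_X^\times)\bigr)=\mathrm{im}\bigl(H^2(X,\ZZ)\rightarrow H^2(X,\CC)\bigr)$.

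Putting the pieces together, $\mathrm{im}(\dlog_*)=\ker\partial=\ker(\exp_*\circ\delta)=\delta^{-1}(\ker\exp_*)$, and since $\delta$ is the de Rham inclusion while $\ker\exp_*=\mathrm{im}(H^2(X,\ZZ)\rightarrow H^2(X,\CC))$, this is exactly the characterization of $\mathrm{im}(\dlog_*)$ used in the first paragraph, which completes the proof. The argument is essentially formal once $\delta$ has been identified; the genuine inputs are the Poincar\'e lemma (for exactness of the $\dlog$ sequence and the resolution of $\Omega_X^{1,\mathrm{cl}}$) and Hodge--de Rham degeneration (so that $H^1(\Omega_X^{1,\mathrm{cl}})$ really sits inside $H^2(X,\CC)$), and the main thing to watch throughout is the bookkeeping of factors of $2\pi i$, so that "integral" is interpreted correctly.
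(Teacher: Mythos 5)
Your proof is correct, and you should be aware at the outset that the paper does not prove this statement at all --- it is quoted directly from Weil --- so there is no internal argument to compare against; what you have written is the standard proof, and every step checks out. Picard algebroids are classified by $H^{1}(\Omega_{X}^{1,\mathrm{cl}})$ (the $D=\emptyset$ case of Theorem~\ref{classification theorem}, or Proposition~\ref{loc trivial PA cohomology} with empty divisor), so by Proposition~\ref{dlog PA prop} prequantizability is equivalent to $[\AA]\in\mathrm{im}(\dlog_{*})$; the two short exact sequences $0\to\CC_{X}^{\times}\to\OO_{X}^{\times}\to\Omega_{X}^{1,\mathrm{cl}}\to 0$ (which the paper itself uses in the torsor Proposition) and $0\to\CC_{X}\to\OO_{X}\to\Omega_{X}^{1,\mathrm{cl}}\to 0$, compared via $\exp$, reduce everything to identifying the additive connecting map $\delta$ with the natural map $\HH^{2}(\Omega_{X}^{\geq 1})\to H^{2}(X,\CC)$; your rotation of the triangle $\Omega_{X}^{\geq 1}\to\Omega_{X}^{\bullet}\to\OO_{X}$ (or equivalently the \v{C}ech computation with $A_{ij}=df_{ij}$) does this correctly, and the injectivity of that map is indeed Hodge--de Rham degeneration. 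Your insistence on tracking the Tate twist is also warranted: with the paper's unnormalized $\dlog$ the lattice one actually detects is $\mathrm{im}(H^{2}(X,\ZZ(1))\to H^{2}(X,\CC))$, and the theorem's phrasing silently identifies $\ZZ(1)$ with $\ZZ$. Finally, note that your route is precisely the one the paper follows for the logarithmic generalization: the proof of Theorem~\ref{log PA prequantization theorem} and the Proposition after it characterize $\mathrm{im}(\dlog_{*})$ on $H^{1}(\OO_{X}^{\times}(*D))$ by exhibiting $\Omega_{X}^{1,\mathrm{cl}}(\log D)$ as a mapping cone over $\OO_{X}^{\times}(*D)$, and your pair of exact sequences is exactly the $D=\emptyset$ specialization of that cone argument.
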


\begin{Corollary}	\label{PA prequantization corollary}
Picard algebroids on $X$ admitting prequantization are classified by the group of $(1,1)$ classes with integer periods\footnote{By the Lefschetz theorem on $(1,1)$ classes, this is isomorphic to the N\'eron-Severi group modulo torsion}, defined by 
\[
H^{1,1}(X,\ZZ):= H^{1,1}(X)\cap\text{im}(H^{2}(X,\ZZ)\rightarrow H^{2}(X,\CC)).
\]
\end{Corollary}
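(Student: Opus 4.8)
The plan is to deduce the corollary formally from Weil's criterion (Theorem~\ref{PA prequantization theorem}) together with a Hodge-theoretic identification of the group classifying Picard algebroids. Picard algebroids on $X$ are the $D=\emptyset$ case of log Picard algebroids, so by Theorem~\ref{classification theorem} (equivalently, the Be{\u\i}linson--Bernstein classification \cite{BeilinsonBernstein93}) they are classified by $\HH^{2}(F^{1}\Omega_{X}^{\bullet})$; since $\Omega_{X}^{\geq 1}$ resolves $\Omega_{X}^{1,\mathrm{cl}}$ placed in degree one, by the holomorphic Poincar\'e lemma, this agrees with $H^{1}(\Omega_{X}^{1,\mathrm{cl}})$, matching the $D=\emptyset$ case of Proposition~\ref{loc trivial PA cohomology}. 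I would then embed this group into $H^{2}(X,\CC)$: using the $E_{1}$-degeneration of the Hodge--de Rham spectral sequence of the biregular filtered complex $(\Omega_{X}^{\bullet},F)$ together with \cite[Proposition~1.3.2]{Deligne71}, exactly as in the proof of Theorem~\ref{log PA classification theorem}, the natural map $\HH^{2}(F^{1}\Omega_{X}^{\bullet})\to\HH^{2}(\Omega_{X}^{\bullet})=H^{2}(X,\CC)$ is injective with image the Hodge-filtration piece $F^{1}H^{2}(X,\CC)=H^{2,0}(X)\oplus H^{1,1}(X)$. Under this identification the class of the Atiyah algebroid $\AA_{\LL}$ is the image of $c_{1}(\LL)$ in $H^{2}(X,\CC)$ (compatibly with Proposition~\ref{dlog PA prop}), and the Baer sum corresponds to addition of classes, so the classifying bijection is an isomorphism of groups.

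With this set-up, Theorem~\ref{PA prequantization theorem} says that a Picard algebroid $\AA$ admits a prequantization precisely when $[\AA]$ lies in the image $L:=\mathrm{im}(H^{2}(X,\ZZ)\to H^{2}(X,\CC))$. Hence the prequantizable Picard algebroids are classified by the subgroup $F^{1}H^{2}(X,\CC)\cap L$ of $H^{2}(X,\CC)$, and it remains only to identify this with $H^{1,1}(X)\cap L$, which is the definition of $H^{1,1}(X,\ZZ)$.

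The inclusion $H^{1,1}(X)\cap L\subseteq F^{1}H^{2}(X,\CC)\cap L$ is immediate since $H^{1,1}(X)\subset F^{1}H^{2}(X,\CC)$. For the reverse inclusion I would use Hodge symmetry: every class in $L$ is real, hence fixed by complex conjugation, which interchanges $H^{p,q}(X)$ and $H^{q,p}(X)$ and therefore carries $F^{1}H^{2}(X,\CC)=H^{2,0}(X)\oplus H^{1,1}(X)$ onto $H^{0,2}(X)\oplus H^{1,1}(X)$; since the Hodge decomposition is a direct sum, the intersection of these two subspaces is $H^{1,1}(X)$, so a real class contained in $F^{1}H^{2}(X,\CC)$ automatically lies in $H^{1,1}(X)$. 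Combining the two steps gives that prequantizable Picard algebroids are classified by $H^{1,1}(X)\cap L=H^{1,1}(X,\ZZ)$. The further identification of $H^{1,1}(X,\ZZ)$ with the N\'eron--Severi group modulo torsion noted in the footnote is the Lefschetz theorem on $(1,1)$-classes and plays no role in the proof.

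I do not anticipate a genuine obstacle: all of the substantive input---Weil's criterion, the classification of Picard algebroids, and Hodge--de Rham degeneration---is already in hand. The one step that needs a little care is the identification $H^{1}(\Omega_{X}^{1,\mathrm{cl}})\cong F^{1}H^{2}(X,\CC)$ as subspaces of $H^{2}(X,\CC)$, in particular the injectivity of $\HH^{2}(F^{1}\Omega_{X}^{\bullet})\to H^{2}(X,\CC)$; once that is in place, the passage from ``$F^{1}$ and integral'' to ``$(1,1)$ and integral'' is the purely formal conjugation argument above.
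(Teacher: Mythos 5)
Your proof is correct and follows essentially the same route as the paper's: identify the classifying group $H^{1}(\Omega_{X}^{1,\mathrm{cl}})$ with $F^{1}H^{2}(X,\CC)=H^{2,0}(X)\oplus H^{1,1}(X)$, invoke Weil's integrality criterion (Theorem~\ref{PA prequantization theorem}), and observe that an integral, hence real, class in $F^{1}$ must be of type $(1,1)$. The paper states the final conjugation step more tersely, but the argument is the same.
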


\begin{proof}
Let $\AA$ be a Picard algebroid, with isomorphism class in $H^{1}(\Omega_{X}^{1,\text{cl}}) = H^{2,0}(X)\oplus H^{1,1}(X)$. If $\AA$ is prequantizable, then by the above theorem, $[\AA]$ must lie in the image of the map from $H^{2}(X,\ZZ)$ and is therefore real.  Therefore $[\AA]\in H^{1,1}(X)$, as required. 
%\rightarrow H^{2}(X,\CC)$. It remains to prove that $[\AA]$ must be a class in $H^{1}(\Omega_{X}^{1})$. From the $E_1$-degeneration of the Hodge-de Rham spectral sequence, we have the short exact sequence 
%\[
%0\rightarrow H^{0}(\Omega_{X}^{2,\text{cl}})\rightarrow H^{1}(\Omega_{X}^{1,\text{cl}})\rightarrow H^{1}(\Omega_{X}^{1})\rightarrow 0
%\]
%Now, the group $H^{0}(\Omega_{X}^{2,\text{cl}})$ is the holomorphic $H^{2,0}$ part of the Hodge decomposition of $H^{2}(X,\CC)$ and so its intersection with $H^{2}(X,\ZZ)$ must be zero. This implies that the class $[\AA]\in H^{1}(\Omega_{X}^{1,\text{cl}})$ must be a class in $H^{1}(\Omega_{X}^{1})$.
%
%Suppose that we have a Picard algebroid $\AA$ whose class $[\AA]$ lies in the subgroup $H^{1,1}(X,\ZZ)$. By the definition of $H^{1,1}(X,\ZZ)$, $[\AA]$ lies in the image of $H^{2}(X,\ZZ)\rightarrow H^{2}(X,\CC)$ and so by Theorem \ref{PA prequantization theorem}, the Picard algebroid $\AA$ admits a prequantization.
\end{proof}

%\begin{Remark}
%By the Lefschetz theorem on $(1,1)$-classes, the group $H^{1,1}(X,\ZZ)$ of Picard algebroids on $X$ admitting prequantization defined above is precisely the N\'eron-Severi group of $X$. 
%\end{Remark}

\begin{Proposition}The line bundles prequantizing a fixed prequantizable Picard algebroid form a torsor for the group of holomorphic line bundles with torsion first Chern class. 
\end{Proposition}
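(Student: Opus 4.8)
The plan is to recognize the set of prequantizations of a fixed prequantizable Picard algebroid $\AA$ as a nonempty fibre of the homomorphism $\dlog_*\colon\mathrm{Pic}(X)=H^1(\OO_X^\times)\to H^1(\Omega_X^{1,\mathrm{cl}})$ of Proposition~\ref{dlog PA prop}, and then to compute its kernel. First I would observe that, by Proposition~\ref{dlog PA prop}, a holomorphic line bundle $\LL$ prequantizes $\AA$ (that is, $\AA_\LL\cong\AA$) if and only if $\dlog_*[\LL]=[\AA]$ in $H^1(\Omega_X^{1,\mathrm{cl}})$. Hence the isomorphism classes of prequantizations of $\AA$ form the fibre $\dlog_*^{-1}([\AA])$, which is nonempty precisely because $\AA$ is assumed prequantizable. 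Since a nonempty fibre of a homomorphism of abelian groups is a torsor under its kernel, and $\mathrm{Pic}(X)$ acts on this fibre by tensor product, the set of prequantizations is a torsor for $K:=\ker(\dlog_*)\subseteq\mathrm{Pic}(X)$: for prequantizations $\LL,\LL'$ one has $\dlog_*[\LL'\otimes\LL^{-1}]=0$, so $\LL'\otimes\LL^{-1}\in K$, while tensoring a prequantization with an element of $K$ yields another. It then remains only to identify $K$ with the group of holomorphic line bundles whose first Chern class is torsion.

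For this, I would use that $H^1(\Omega_X^{1,\mathrm{cl}})$ embeds in $H^2(X,\CC)$ --- this is the injectivity invoked in the statement of Theorem~\ref{PA prequantization theorem}, coming from the $E_1$-degeneration of the Hodge--de Rham spectral sequence exactly as in the proof of Theorem~\ref{log PA classification theorem} with empty divisor. Consequently $K$ is the kernel of the composite homomorphism $\mathrm{Pic}(X)\to H^2(X,\CC)$ sending $\LL$ to the image of $[\AA_\LL]=\dlog_*[\LL]$. The key point is then to identify this composite with the complexified first Chern class. To do so I would invoke the holomorphic Poincar\'e lemma, which gives a short exact sequence of sheaves
\[
0\longrightarrow\CC_X\longrightarrow\OO_X\overset{d}{\longrightarrow}\Omega_X^{1,\mathrm{cl}}\longrightarrow0,
\]
and observe that $(\ZZ(1)_X\hookrightarrow\CC_X,\ \mathrm{id}_{\OO_X},\ \dlog)$ is a morphism to it from the exponential sequence $0\to\ZZ(1)_X\to\OO_X\overset{\exp}{\to}\OO_X^\times\to0$, the only nontrivial square commuting because $\dlog\circ\exp=d$. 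Naturality of connecting homomorphisms then shows that the composite $\mathrm{Pic}(X)\overset{\dlog_*}{\to}H^1(\Omega_X^{1,\mathrm{cl}})\hookrightarrow H^2(X,\CC)$ agrees with $\mathrm{Pic}(X)\overset{c_1}{\to}H^2(X,\ZZ(1))\to H^2(X,\CC)$. Finally, since $\CC$ is flat over $\ZZ$, the universal coefficient theorem gives that the kernel of $H^2(X,\ZZ)\to H^2(X,\CC)$ is precisely the torsion subgroup, so $K$ consists exactly of the line bundles with torsion first Chern class, as desired.

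I expect the main obstacle to be the identification, in the second step, of the composite $\mathrm{Pic}(X)\to H^2(X,\CC)$ with the complexified Chern class --- equivalently, the classical fact that the curvature of a connection on $\LL$ represents a nonzero multiple of $c_1(\LL)$ in de Rham cohomology. The morphism-of-extensions argument above is meant to isolate its whole content, namely the identity $\dlog\circ\exp=d$; the precise normalization (powers of $2\pi i$) will be irrelevant, since only the kernel of the map matters for the torsor statement. Everything else --- the torsor structure on a nonempty fibre and the universal-coefficients computation of the torsion subgroup --- is routine.
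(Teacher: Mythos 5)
Your proof is correct, and it diverges from the paper's in its second half. The first half---realizing the isomorphism classes of prequantizations as the fibre $\dlog_*^{-1}([\AA])$, hence a torsor under $\ker(\dlog_*)$---is exactly what the paper does, phrased there via the long exact sequence of $0\to\CC_X^\times\to\OO_X^\times\xrightarrow{\dlog}\Omega_X^{1,\mathrm{cl}}\to 0$. Where you differ is in identifying this kernel with the line bundles of torsion first Chern class: the paper reads the kernel off as the image of $H^1(X,\CC^\times)\to H^1(\OO_X^\times)$, i.e.\ the line bundles admitting a flat connection, and then cites the standard fact that these are precisely the ones with torsion $c_1$ (recording along the way the finer structure $0\to\mathrm{Jac}(X)\to\mathcal{K}\to H^2(X,\ZZ)_{\mathrm{tors}}\to 0$), whereas you compose $\dlog_*$ with the injection $H^1(\Omega_X^{1,\mathrm{cl}})\hookrightarrow H^2(X,\CC)$, identify the composite with the complexified Chern class via the map of extensions from the exponential sequence to $0\to\CC_X\to\OO_X\xrightarrow{d}\Omega_X^{1,\mathrm{cl}}\to 0$, and finish with universal coefficients. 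Your route turns the ``torsion $c_1$'' characterization into a computation rather than a citation, at the cost of not exhibiting the extension structure of the torsor group that the paper records. One small point worth making explicit: the injection $H^1(\Omega_X^{1,\mathrm{cl}})\hookrightarrow H^2(X,\CC)$ you invoke should be taken to be the connecting homomorphism of $0\to\CC_X\to\OO_X\to\Omega_X^{1,\mathrm{cl}}\to 0$, whose injectivity follows from $E_1$-degeneration because $H^1(X,\CC)\to H^1(\OO_X)$ is then surjective; with that choice the naturality of connecting maps applies verbatim and no separate comparison of the two inclusions is needed.
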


\begin{proof}
%We know that the Picard algebroids $\AA$ on $X$ that can be prequantized are the ones whose class $[\AA]$ can be lifted under the map
%\[
%H^{1}(\OO_{X}^{\times})\overset{\text{dlog}}{\rightarrow} H^{1}(\Omega_{X}^{1,\text{cl}})
%\]
We make use of the short exact sequence 
\[
\xymatrix{0\ar[r] & \CC_{X}^{\times}\ar[r] & \OO_{X}^{\times}\ar[r]^-{\dlog}& \Omega_{X}^{1,\mathrm{cl}}\ar[r] & 0}
\]
and its associated long exact sequence
\[
0\rightarrow H^{0}(\Omega_{X}^{1,\text{cl}})\rightarrow H^{1}(X,\CC^{\times})\overset{\alpha}\rightarrow H^{1}(\OO_{X}^{\times})\overset{\dlog_*}{\rightarrow} H^{1}(\Omega_{X}^{1,\text{cl}})\rightarrow\dots.
\]
Prequantizations of $\AA$ are classified by lifts of $[\AA]$ under the above $\dlog_*$ map.  Lifts of the class $[\AA]$ therefore form a torsor for the image $\mathcal{K}$ of the map $\alpha$, which are precisely those holomorphic line bundles admitting flat connection.  These may be characterized as the line bundles $\LL$ with $c_1(\LL)$ torsion, which form an extension 
\[
0\rightarrow\text{Jac}(X)\rightarrow\mathcal{K}\rightarrow H^{2}(X,\ZZ)_{\text{tors}}\rightarrow0.
\]
%The group $\mathcal{K}$ is exactly the group of flat holomorphic line bundles on $X$.
\end{proof}

\subsubsection{Prequantization of log Picard algebroids}
In this section we generalize the results of the previous section to allow logarithmic singularities.  We begin by introducing the sheaf of filtered differential operators on $(X,D)$:

\begin{Definition}	
The \textbf{\textit{filtered differential operators}} $\DD_{X,D}$ on $(X,D)$ is the sheaf of differential operators on $\OO_{X}(*D)$ which preserve the $\ZZ$-filtration on $\OO_{X}(*D)$ given in \eqref{eq:Z-filtration}. 
\end{Definition}

%We note that the $D^{0}$ part of this filtered algebra is the subalgebra $\OO_{X}$ acting by multiplication. 

\begin{Definition}
Let $\mathcal{M}$ be a meromorphic line bundle on $(X,D)$. A first-order filtered differential operator on $\mathcal{M}$ is a map $\xi:\mathcal{M}\rightarrow\mathcal{M}$ which preserves the $\ord(\MM)$-filtration on $\mathcal{M}$ and such that, for any $s\in\mathcal{M}$ and $f\in\OO_{X}(*D)$, the symbol $\sigma_\xi$, defined by 
\[
\sigma_{\xi}(f)s = \xi(fs)-f\xi(s),
\]
is a derivation of $\OO_{X}(*D)$. We denote this sheaf of operators by $\AA_\MM$ and call it the {\bf Atiyah algebroid of $\MM$}. 
\end{Definition}

\begin{Proposition}
The Atiyah algebroid $\AA_\MM$, with its natural commutator bracket and symbol map, defines a locally trivial log Picard algebroid on $(X,D)$.
\end{Proposition}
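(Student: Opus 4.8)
The plan is to reduce everything to a local computation of first-order filtered differential operators on $\OO_X(*D)$, and then to carry out the standard Atiyah-algebroid bookkeeping in this filtered setting.

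First I would fix a point of $X$ and a neighbourhood $U$ over which $\MM$ admits a trivialisation $e:\MM|_U\cong\OO_X(*D)|_U$ and on which the simple normal crossings divisor is cut out by a product $z_1\cdots z_r=0$ of coordinate functions. Transporting along $e$, the $\ord(\MM)$-filtration on $\MM|_U$ becomes the $\ZZ$-filtration $F^k\OO_X(*D)=\OO_X(kD)$ of \eqref{eq:Z-filtration}, so that $\AA_\MM|_U$ is identified with the sheaf $\DD_{X,D}^{\le 1}$ of sections of $\DD_{X,D}$ of order $\le 1$. The key step is to pin down this sheaf. If $P\in\DD_{X,D}^{\le 1}$ then $P(1)\in F^0\OO_X(*D)=\OO_X$, since $P$ preserves $F^0$, and its symbol $\delta=\sigma_P$ is a derivation of $\OO_X(*D)$ which, because $P$ is filtered and $\delta(f)=P(f)-fP(1)$, also preserves the pole-order filtration. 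Writing $\delta=\sum_j a_j\partial_{z_j}$ with $a_j\in\OO_X(*D)$ and applying $\delta$ to the extremal sections $z_1^{-k_1}\cdots z_r^{-k_r}$ for large $k_i$ forces $a_j\in z_j\OO_X$ for $j\le r$ and $a_j\in\OO_X$ for $j>r$; that is, $\delta$ is a logarithmic vector field. Conversely, every $\delta+g$ with $\delta\in\tlogd$ and $g\in\OO_X$ manifestly defines an operator in $\DD_{X,D}^{\le 1}$. Hence $\AA_\MM|_U\cong\tlogd|_U\oplus\OO_X|_U$ as $\OO_X$-modules; in particular $\AA_\MM$ is locally free of rank $n+1$, the symbol map $\sigma:\AA_\MM\to\tlogd$ is well defined, $\OO_X$-linear and (locally split, hence) surjective, and its kernel is $\OO_X$ acting by multiplication, which yields the exact sequence \eqref{exlalg}.

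Next I would verify the Lie-algebroid axioms. The commutator of two first-order filtered operators is again of order $\le 1$, since the leading symbols cancel, and it preserves the filtration, so the commutator bracket takes values in $\AA_\MM$; it is $\CC$-bilinear, antisymmetric and satisfies Jacobi for free, and the Leibniz rule \eqref{leibniz} is the usual identity $[a_1,fa_2]=\sigma(a_1)(f)a_2+f[a_1,a_2]$, where $\sigma(a_1)(f)\in\OO_X$ for $f\in\OO_X$ because $\sigma(a_1)\in\tlogd$. The symbol map is bracket-preserving because the symbol of a commutator is the Lie bracket of the symbols, and $e=\mathrm{id}$ is a central section since it commutes with every operator. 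Thus $\AA_\MM$ is a log Picard algebroid.

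Finally, for local triviality, over the neighbourhood $U$ above I would define $\nabla:\tlogd|_U\to\AA_\MM|_U$ by sending a logarithmic vector field to the corresponding derivation of $\OO_X(*D)$, viewed as a first-order filtered operator with vanishing zeroth-order part. This is an $\OO_X$-linear splitting of $\sigma$, and since the commutator of two derivations, as an operator, equals the derivation given by their Lie bracket and has no zeroth-order part, we get $[\nabla\xi_1,\nabla\xi_2]=\nabla[\xi_1,\xi_2]$, so $F_\nabla=0$ by \eqref{curvdef}. Hence a flat connection exists near every point and $\AA_\MM$ is locally trivial. I expect the computation identifying the filtered derivations of $\OO_X(*D)$ with $\tlogd$ to be the main obstacle; once $\AA_\MM$ is recognised locally as an extension of $\tlogd$ by $\OO_X$ of this specific form, the remaining points are the routine Atiyah-algebroid verifications.
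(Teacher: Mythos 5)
Your proof is correct and follows essentially the same route as the paper's: trivialize $\MM$ locally to identify $\AA_\MM$ with first-order filtered operators on $\OO_X(*D)$, force the symbol into $\tlogd$ by testing against sections of high pole order, identify the kernel with $\OO_X$, and use the local trivialization (your derivation-with-no-zeroth-order-part splitting is exactly the paper's condition $\nabla e = 0$) to produce a flat local connection. The only differences are cosmetic: you carry out the symbol computation for a general normal crossings chart $z_1\cdots z_r=0$ where the paper treats the smooth case and asserts the general one, and you make the surjectivity of the symbol explicit via the converse inclusion $\tlogd\oplus\OO_X\subset\DD^{\le 1}_{X,D}$.
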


\begin{proof}
We first show that $\sigma_{\xi}\in\TT_{X}(-\log D)$ for $\xi\in\AA_\MM$. We assume for simplicity that the divisor $D$ is smooth and is given locally by $\{z=0\}$, with the general simple normal crossings case following from the same argument.  From the defining property of $\xi$,
\begin{equation*}
\begin{split}
[\xi, z^{-1}]s & = \xi(z^{-1}s) - z^{-1}\xi(s) \\ 
& = -z^{-2}\sigma_{\xi}(z)s,
\end{split}
\end{equation*}
for $s\in\mathcal{L}$. If $s$ is in the degree $k$ part of the $\mathrm{ord}(\mathcal{M})$-filtration defined on $\mathcal{M}$, the left-hand side of the above equation is in the degree $k+1$ part which implies that $\sigma_{\xi}(z)\subset (z)$. Thus, $\sigma_{\xi}\in\TT_{X}(-\log D)$.

Since $z\partial_z$ acts on $\CC[z,z^{-1}]$ preserving the order filtration, it follows that the symbol is a surjective map from $\AA_\MM$ to $\TT_{X}(-\log D)$. Its kernel consists of the 0-th order filtered differential operators, which is precisely $\OO_{X}$, acting by multiplication and generated by the central section $e=1$. Therefore, we have the short exact sequence
\begin{equation}\label{extlogam}
0\rightarrow\OO_{X}\rightarrow\AA_\MM\rightarrow\TT_{X}(-\log D)\rightarrow0.
\end{equation}
As in the usual case, the symbol is bracket-preserving, completing the verification that $\AA_\MM$ is a log Picard algebroid on $(X,D)$. The fact that $\AA_\MM$ is locally trivial follows directly from the fact that $\MM$ is: if $e\in \MM|_{U}$ is a local trivialization, then the condition $\nabla e = 0$ defines a unique meromorphic connection on $\MM_{U}$, and hence gives a local splitting of~\eqref{extlogam}.
\end{proof}

Having established the above, we obtain (using the same proof) the following analogue of Proposition \ref{dlog PA prop} which relates meromorphic line bundles to log Picard algebroids: 
\begin{Proposition}	\label{dlog PA mero prop}
Let $\mathcal{M}$ be a meromorphic line bundle on $(X,D)$ and let $\AA_{\mathcal{M}}$ be its associated Atiyah algebroid. Then $\dlog_*[\MM] = [\AA_\MM]$, where $[\MM]\in H^1(\OO_X^\times(*D))$ and $[\AA_\MM]\in H^1(\Omega^{1,\mathrm{cl}}_X(\log D))$ are the corresponding isomorphism classes and $\dlog_*$ is induced by the homomorphism of sheaves of abelian groups
\[
\xymatrix{\OO_X^\times(*D)\ar[r]^-{\dlog}& \Omega_X^{1,\mathrm{cl}}(\log D)}.
\]
\end{Proposition}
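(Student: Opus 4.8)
The statement is the meromorphic analogue of Proposition~\ref{dlog PA prop}, and the assertion is that the proof is verbatim the same with $\OO_X^\times$, $\Omega_X^{1,\mathrm{cl}}$, $\TT_X$ replaced by $\OO_X^\times(*D)$, $\Omega_X^{1,\mathrm{cl}}(\log D)$, $\TT_X(-\log D)$. So the plan is simply to transcribe that argument and check at each step that the logarithmic/meromorphic versions of the ingredients are the ones established earlier in Section~\ref{sec 3}.

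First I would choose local trivializations $(e_i)_{i\in I}$ of $\MM$ over an open cover $\mathcal{U}=(U_i)$, as in~\eqref{loctrm}, so that $e_j = e_i g_{ij}$ gives a \v{C}ech representative $(g_{ij})\in\check Z^1(\mathcal{U},\OO_X^\times(*D))$ for $[\MM]\in H^1(\OO_X^\times(*D))$ (Theorem~\ref{mero picard group theorem}). Each $e_i$ determines a meromorphic flat connection $\nabla_i$ on $\MM|_{U_i}$ by the condition $\nabla_i(e_i)=0$; this is exactly the construction used in the preceding proposition to show $\AA_\MM$ is locally trivial, so it furnishes a local splitting of~\eqref{extlogam}. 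On $U_i\cap U_j$ the difference $\nabla_i-\nabla_j$ is multiplication by a closed logarithmic $1$-form $A_{ij}\in\Omega_X^{1,\mathrm{cl}}(\log D)$, and $(A_{ij})$ is a \v{C}ech representative for $[\AA_\MM]\in H^1(\Omega_X^{1,\mathrm{cl}}(\log D))$ under the classification of Proposition~\ref{loc trivial PA cohomology}.

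Then I would compute, applying $\nabla_j$ to $e_j = e_i g_{ij}$ and using the Leibniz rule for the meromorphic connection together with $\nabla_i(e_i)=0$:
\[
0 = \nabla_j(e_j) = \nabla_j(e_i g_{ij}) = (dg_{ij})e_i + \bigl((\nabla_i - A_{ij})e_i\bigr)g_{ij} = \bigl(dg_{ij} - A_{ij}\,g_{ij}\bigr)e_i,
\]
which forces $A_{ij} = g_{ij}^{-1}\,dg_{ij} = \dlog(g_{ij})$. Thus the cocycle $(A_{ij})$ representing $[\AA_\MM]$ is the image under $\dlog$ of the cocycle $(g_{ij})$ representing $[\MM]$, i.e. $\dlog_*[\MM] = [\AA_\MM]$. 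As in Proposition~\ref{dlog PA prop} one should check that replacing the local trivializations $e_i$ by $e_i' = e_i h_i$ with $h_i\in\OO_X^\times(*D)(U_i)$ changes $(g_{ij})$, hence $(A_{ij})$, by a coboundary, so the identity descends to cohomology classes; this is routine and I would omit it exactly as the authors do in the holomorphic case.

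The only point requiring genuine care — and the main (mild) obstacle — is making sure all the objects live in the correct sheaves: that $\nabla_i$ is a well-defined \emph{filtered} meromorphic connection (so that $\nabla_i - \nabla_j$ lands in $\Omega_X^1(\log D)$ rather than an uncontrolled meromorphic $1$-form), that its curvature vanishes so $A_{ij}$ is \emph{closed}, and that $\dlog$ of an element $h f_1^{k_1}\cdots f_m^{k_m}$ of $\OO_X^\times(*D)$ is indeed a closed logarithmic form, namely $\dlog h + \sum_i k_i\,df_i/f_i \in \Omega_X^{1,\mathrm{cl}}(\log D)$, so that the sheaf homomorphism $\dlog\colon\OO_X^\times(*D)\to\Omega_X^{1,\mathrm{cl}}(\log D)$ in the statement is well-defined. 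All of these follow from the local description of $\OO_X^\times(*D)$ given in Section~\ref{sec 3} and from the fact, already used in the construction of $\AA_\MM$, that a local trivialization of $\MM$ yields a unique compatible meromorphic connection; so once these identifications are in place the computation above goes through unchanged.
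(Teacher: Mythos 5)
Your proposal is correct and follows exactly the route the paper takes: the paper's own "proof" of this proposition is literally the remark that the argument of Proposition~\ref{dlog PA prop} carries over verbatim, using the local flat meromorphic connections $\nabla_i(e_i)=0$ furnished by the local triviality of $\AA_\MM$, and your transcription together with the sanity checks (that $\dlog$ of $hf_1^{k_1}\cdots f_m^{k_m}$ lands in $\Omega_X^{1,\mathrm{cl}}(\log D)$, and that $A_{ij}=\nabla_i-\nabla_j$ is a closed logarithmic form) is precisely what is implicitly being invoked.
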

%
%Suppose that $\{g_{ij}\}\in H^{1}(\OO_{X}^{\times}(*D))$ are the transition functions of $\mathcal{M}$ and $[\AA_{\mathcal{M}}]\in H^{1}(\Omega_{X}^{1,\mathrm{cl}}(\log D))$ is the class of $\AA_{\mathcal{M}}$, then, we have $[\AA_{\mathcal{M}}]=\mathrm{dlog}(\{g_{ij}\})$ where $\mathrm{dlog}$ is the map in cohomology:
%\[
%H^{1}(\OO_{X}^{\times}(*D))\overset{\mathrm{dlog}}{\rightarrow} H^{1}(\Omega_{X}^{1,\mathrm{cl}}(\log D))
%\]
%\end{Proposition}
%
%\begin{Proposition}
%Let $\LL$ be a holomorphic line bundle and let $\AA_\LL$ its associated Atiyah algebroid.  Then $\dlog_*[\LL] = [\AA_\LL]$, where $[\mathcal{L}]\in H^1(\OO_X^\times)$ and $[\AA_\LL]\in H^1(\Omega^{1,\mathrm{cl}}_X)$ are the corresponding isomorphism classes and $\dlog_*$ is induced by the homomorphism of sheaves of abelian groups
%\[
%\xymatrix{\OO_X^\times\ar[r]^-{\dlog}& \Omega_X^{1,\mathrm{cl}}}.
%\]
%
%
%
%\begin{proof}
%The proof of this is exactly the same as the proof given in Proposition \ref{dlog PA prop}.
%\end{proof}
In the remainder of this section, we solve the prequantization problem for log Picard algebroids on $(X,D)$ where $D$ is a smooth divisor in $X$. 

\begin{Lemma} \label{mero NS lemma}
There is an isomorphism of groups
\[
W_3H^{2}(U,\ZZ(1))\cong W_3H^2(U,\CC)\times_{H^2(U,\CC)}H^2(U,\ZZ(1)).
\]
\end{Lemma}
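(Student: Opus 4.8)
The plan is to compare the integral weight filtration $W_3H^2(U,\ZZ(1))$, defined via the canonical truncation $\sigma_{\leq 1}Rj_*\ZZ(1)_U$, against the complex weight filtration $W_3H^2(U,\CC)$ and the full integral cohomology $H^2(U,\ZZ(1))$. By \eqref{eq:W3 as hyperH2}, we already know $W_3H^2(U,\ZZ(1))\cong\HH^2(\sigma_{\leq 1}Rj_*\ZZ(1)_U)$ and that the natural map to $H^2(U,\ZZ(1))$ is injective. So the content of the lemma is that an integral class $\gamma\in H^2(U,\ZZ(1))$ lies in the image of $W_3H^2(U,\ZZ(1))$ if and only if its image in $H^2(U,\CC)$ lies in $W_3H^2(U,\CC)$; equivalently, the integral weight filtration is the ``integral part'' of the complex weight filtration. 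In other words, I must produce a map from the fibre product to $W_3H^2(U,\ZZ(1))$ inverse to the evident map, and the crux is surjectivity: an integral class whose complexification is in $W_3$ must already come from $\HH^2(\sigma_{\leq 1}Rj_*\ZZ(1)_U)$.

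First I would set up the comparison using the short exact sequence of complexes \eqref{eq:canonical filtration ses} for $\sigma$, both over $\ZZ(1)$ and (after tensoring, or using the analogous $\sigma$-filtration on $Rj_*\CC_U$) over $\CC$. This gives a commutative ladder of long exact hypercohomology sequences relating $\HH^2(\sigma_{\leq 1}Rj_*\ZZ(1)_U)\to H^2(U,\ZZ(1))\to\HH^2(\sigma_{>1}Rj_*\ZZ(1)_U)$ with the corresponding complex sequence. Since $\HH^1(\sigma_{>1}Rj_*\ZZ(1)_U)=0$ (as used just above the lemma), the key object is the map $H^2(U,\ZZ(1))\to\HH^2(\sigma_{>1}Rj_*\ZZ(1)_U)=H^0(R^2j_*\ZZ(1)_U)$, whose kernel is exactly $W_3H^2(U,\ZZ(1))$. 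Correspondingly, $W_3H^2(U,\CC)$ is the kernel of $H^2(U,\CC)\to H^0(R^2j_*\CC_U)$. By \eqref{eq:RM=ZD}, $R^2j_*\ZZ(1)_U=\alpha_{2*}\ZZ_{D^{(2)}}(-2)$, a torsion-free sheaf, so $H^0(R^2j_*\ZZ(1)_U)=H^0(D^{(2)},\ZZ)$ injects into $H^0(D^{(2)},\CC)=H^0(R^2j_*\CC_U)$.

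With that in hand the argument becomes a diagram chase. Given $\gamma\in H^2(U,\ZZ(1))$ whose image $\gamma_\CC\in H^2(U,\CC)$ lies in $W_3H^2(U,\CC)$, the image of $\gamma$ in $H^0(R^2j_*\ZZ(1)_U)$ maps (via the injection just noted, and compatibly with the residue/projection maps) to the image of $\gamma_\CC$ in $H^0(R^2j_*\CC_U)$, which is zero since $\gamma_\CC\in W_3$. By injectivity of $H^0(R^2j_*\ZZ(1)_U)\hookrightarrow H^0(R^2j_*\CC_U)$, the image of $\gamma$ already vanishes, so $\gamma\in W_3H^2(U,\ZZ(1))$. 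Conversely, any class in $W_3H^2(U,\ZZ(1))$ maps into $W_3H^2(U,\CC)$ by naturality of the weight filtration under $\ZZ\to\CC$. This establishes the desired identification with the fibre product $W_3H^2(U,\CC)\times_{H^2(U,\CC)}H^2(U,\ZZ(1))$; the resulting bijection is a group isomorphism since every map in sight is a homomorphism.

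The main obstacle is ensuring that the integral and complex ``weight $\leq 3$'' conditions are genuinely detected by the \emph{same} map, i.e.\ that the square
\[
\xymatrix{
H^2(U,\ZZ(1))\ar[r]\ar[d] & H^0(R^2j_*\ZZ(1)_U)\ar[d] \\
H^2(U,\CC)\ar[r] & H^0(R^2j_*\CC_U)
}
\]
commutes and the right vertical arrow is injective. Commutativity follows from functoriality of the Leray spectral sequence for $\sigma$ under the coefficient map $\ZZ(1)\to\CC$, and injectivity from the torsion-freeness of $\alpha_{2*}\ZZ_{D^{(2)}}(-2)$ together with the fact that $D^{(2)}$ is a disjoint union of smooth projective varieties, so $H^0(D^{(2)},\ZZ)\hookrightarrow H^0(D^{(2)},\CC)$. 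Once these compatibilities are in place the lemma follows formally; no further delicate Hodge-theoretic input is needed beyond what is already recalled in Section~\ref{sec 3}.
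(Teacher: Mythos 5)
Your proposal is correct and follows essentially the same route as the paper: both identify $W_3H^2(U,\ZZ(1))$ and $W_3H^2(U,\CC)$ with $\HH^2(\sigma_{\leq 1}Rj_*\ZZ(1)_U)$ and $\HH^2(\sigma_{\leq 1}Rj_*\CC_U)$ respectively, form the commutative ladder coming from the canonical-filtration short exact sequence, and conclude from the injectivity of $\HH^2(\sigma_{>1}Rj_*\ZZ(1)_U)=H^0(D^{(2)},\ZZ)\to H^0(D^{(2)},\CC)=\HH^2(\sigma_{>1}Rj_*\CC_U)$. The only difference is presentational: you phrase the conclusion as an explicit diagram chase on kernels, while the paper reads it off directly from the diagram with exact rows.
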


\begin{proof}
Recall from~\eqref{eq:W3 as hyperH2} that we have the isomorphism
\[
W_3H^2(U,\ZZ(1))\cong\HH^2(\sigma_{\leq 1}Rj_{*}\ZZ(1)_U),
\]
and analogously, we have the same isomorphism for complex cohomology
\[
W_3H^2(U,\CC)\cong\HH^2(\sigma_{\leq 1}Rj_{*}\CC_U).
\]
We have the following commutative diagram
\[
\xymatrix{
	0\ar[r] & \HH^2(\sigma_{\leq 1}Rj_{*}\ZZ(1)_U)\ar[r]\ar[d] & H^2(U,\ZZ(1))\ar[r]\ar[d] & \HH^2(\sigma_{>1}Rj_{*}\ZZ(1)_U)\ar[r]\ar[d] & 0 \\
	0\ar[r] & \HH^2(\sigma_{\leq 1}Rj_{*}\CC_U)\ar[r] & H^2(U,\CC)\ar[r] & \HH^2(\sigma_{>1}Rj_{*}\CC_U)\ar[r] & 0
}
\]
Now, the groups $\HH^2(\sigma_{>1}Rj_{*}\ZZ(1)_U)=H^0(R^2j_{*}\ZZ(1)_U)$ and $\HH^2(\sigma_{>1}Rj_{*}\CC_U)=H^0(R^2j_{*}\CC_U)$. We have $H^0(R^2j_{*}\ZZ(1)_U)=H^0(\ZZ_{D^{(2)}})$ from~\eqref{eq:RM=ZD}, and $H^0(R^2j_{*}\CC_U)=H^{0}(\CC_{D^{(2)}})$ from~\eqref{eq:R^k interpretation for C}. This implies that the map
\[
\HH^2(\sigma_{>1}Rj_{*}\ZZ(1)_U)\rightarrow\HH^2(\sigma_{>1}Rj_{*}\CC_U)
\]
is injective and we use the above diagram to conclude the result.
\end{proof}

\begin{Theorem}	\label{log PA prequantization theorem} 
Let $\AA$ be a locally trivial log Picard algebroid on $(X,D)$. There exists a prequantization for $\AA$ to a meromorphic line bundle $\LL$ if and only if the restriction of the class of $\AA$, $[\AA]\in H^{1}(\Omega_{X}^{1,\mathrm{cl}}(\log D))$, to the complement $U=X\setminus D$ has integral periods.
\end{Theorem}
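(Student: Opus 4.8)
The plan is to reduce the statement to the exact sequence \eqref{eq:cone for OX(D) les II}, which is the meromorphic analogue of the exponential sequence, and then to match the integrality condition on $[\AA]$ with membership in the image of the $\exp$-map. By Proposition~\ref{loc trivial PA cohomology}, a locally trivial log Picard algebroid on $(X,D)$ is classified by its class $[\AA]\in H^{1}(\Omega_{X}^{1,\mathrm{cl}}(\log D))$, and by Proposition~\ref{dlog PA mero prop}, prequantizing $\AA$ to a meromorphic line bundle $\MM$ amounts to finding a lift of $[\AA]$ along the map $\dlog_{*}\colon H^{1}(\OO_{X}^{\times}(*D))\to H^{1}(\Omega_{X}^{1,\mathrm{cl}}(\log D))$. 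So the theorem is equivalent to the statement that $[\AA]$ lies in the image of $\dlog_{*}$ if and only if its restriction to $U$ has integral periods.

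First I would make precise what ``integral periods'' means here: the class $[\AA]|_{U}\in H^{1}(\Omega_{U}^{1,\mathrm{cl}})\subset H^{2}(U,\CC)$ should lie in the image of $H^{2}(U,\ZZ(1))\to H^{2}(U,\CC)$. Since $[\AA]$ is the class of a locally trivial log Picard algebroid, Theorem~\ref{loc trivial log PA Hodge thm} tells us $[\AA]\in F^{1}H^{2}(U,\CC)\cap W_{3}H^{2}(U,\CC)$ already; so the extra hypothesis is precisely that this class also comes from $W_{3}H^{2}(U,\ZZ(1))$ — in other words, using Lemma~\ref{mero NS lemma}, that $[\AA]\in \mathrm{NS}(X,D) = F^{1}H^{2}(U,\CC)\times_{H^{2}(U,\CC)}W_{3}H^{2}(U,\ZZ(1))$. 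Thus the target is to show that $\dlog_{*}$ surjects onto exactly the classes of $\mathrm{NS}(X,D)$, viewed inside $H^{1}(\Omega_{X}^{1,\mathrm{cl}}(\log D))$.

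The key step is a diagram chase relating the two fundamental sequences. On one hand we have the long exact sequence from $\dlog$,
\[
H^{0}(\Omega_{X}^{1,\mathrm{cl}}(\log D))\to H^{1}(\CC_{X}^{\times})\to H^{1}(\OO_{X}^{\times}(*D))\xrightarrow{\dlog_{*}} H^{1}(\Omega_{X}^{1,\mathrm{cl}}(\log D))\to\cdots,
\]
coming from $1\to\CC_{X}^{\times}\to\OO_{X}^{\times}(*D)\xrightarrow{\dlog}\Omega_{X}^{1,\mathrm{cl}}(\log D)\to\cdots$ (I would first check this is a genuine short exact sequence of sheaves, i.e.\ that $\dlog$ surjects onto $\Omega_{X}^{1,\mathrm{cl}}(\log D)$; this uses that a closed log $1$-form with integer residues is locally $\dlog$ of a section of $\OO_{X}^{\times}(*D)$, which follows from the local form of closed log $1$-forms and the Poincar\'e lemma away from residues). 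The image of $\dlog_{*}$ is then the cokernel of $H^{1}(\CC_{X}^{\times})\to H^{1}(\OO_{X}^{\times}(*D))$, which I would describe via the Bockstein-type sequence $H^{1}(\ZZ(1)_{X})\to H^{1}(\CC_{X})\to H^{1}(\CC_{X}^{\times})\to H^{2}(\ZZ(1)_{X})$. On the other hand we have \eqref{eq:cone for OX(D) les II}, whose connecting map $\mathrm{Pic}(X,D)\to W_{3}H^{2}(U,\ZZ(1))$ is the first Chern class restricted to $U$. The plan is to show that the composite $H^{1}(\OO_{X}^{\times}(*D))\xrightarrow{\dlog_{*}}H^{1}(\Omega_{X}^{1,\mathrm{cl}}(\log D))\hookrightarrow H^{2}(U,\CC)$ agrees, up to the standard factor, with the composite $\mathrm{Pic}(X,D)\xrightarrow{c_{1}}W_{3}H^{2}(U,\ZZ(1))\to H^{2}(U,\CC)$; this is the meromorphic version of the familiar compatibility ``$\dlog$ of a line bundle $=$ its integral Chern class mapped to de Rham cohomology'' and can be verified on \v{C}ech cocycles exactly as in Propositions~\ref{dlog PA prop} and~\ref{dlog PA mero prop}. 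Granting this, a class $[\AA]\in H^{1}(\Omega_{X}^{1,\mathrm{cl}}(\log D))$ lifts to $H^{1}(\OO_{X}^{\times}(*D))$ if and only if its image in $H^{2}(U,\CC)$ lies in the image of $c_{1}$, i.e.\ in the image of $W_{3}H^{2}(U,\ZZ(1))$, which (combined with the already-known $F^{1}\cap W_{3}$ constraint and Lemma~\ref{mero NS lemma}) is exactly the integral-periods condition.

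The main obstacle I anticipate is the cocycle-level compatibility of $\dlog_{*}$ with the connecting map of \eqref{eq:cone for OX(D) les II}: one must track a meromorphic transition cocycle $(g_{ij})$ through the quasi-isomorphism $\OO_{X}^{\times}(*D)\sim\mathrm{Cone}(\sigma_{\leq 1}Rj_{*}\ZZ(1)_{U}\to\OO_{X})$ of \eqref{eq: cone for OX(D)} and check that its image in $W_{3}H^{2}(U,\ZZ(1))\cong\HH^{2}(\sigma_{\leq 1}Rj_{*}\ZZ(1)_{U})$ maps, under $\ZZ(1)\hookrightarrow\CC$, to the de Rham class represented by the \v{C}ech--log-de Rham cocycle $(\dlog g_{ij},\, 0)$ built from the local flat meromorphic connections $\nabla_{i}e_{i}=0$. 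This is essentially bookkeeping with the cone construction and the identification \eqref{eq:W3 as hyperH2}, but it is where the real content lies; everything else is formal manipulation of long exact sequences together with the results already established in Sections~\ref{sec 2} and~\ref{sec 3}. A secondary point to handle carefully is that the restriction map $H^{1}(\Omega_{X}^{1,\mathrm{cl}}(\log D))\to H^{1}(\Omega_{U}^{1,\mathrm{cl}})\subset H^{2}(U,\CC)$ is injective on the locally trivial classes — but this follows from the injectivity statements proved inside Theorem~\ref{loc trivial log PA Hodge thm}, so no new work is needed.
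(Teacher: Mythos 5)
Your proposal is correct and follows essentially the same route as the paper: reduce prequantization to lifting $[\AA]$ along $\dlog_*\colon H^1(\OO_X^\times(*D))\to H^1(\Omega_X^{1,\mathrm{cl}}(\log D))$, identify the integral-periods condition with membership in $\mathrm{NS}(X,D)$ via Lemma~\ref{mero NS lemma} and Theorem~\ref{loc trivial log PA Hodge thm}, and conclude by the commutative square relating $\dlog_*$ to the surjection $c_1\colon\mathrm{Pic}(X,D)\twoheadrightarrow\mathrm{NS}(X,D)$. One small caveat: your parenthetical claim that $\dlog$ surjects onto $\Omega_X^{1,\mathrm{cl}}(\log D)$ is false (the cokernel is $\CC_D^\times$, since $\dlog$ of a section of $\OO_X^\times(*D)$ has integer residues, which is why the paper uses the four-term sequence and a mapping cone with $H^1(U,\CC^\times)$ rather than $H^1(\CC_X^\times)$), but this side remark is not load-bearing since your actual argument runs through the N\'eron--Severi diagram.
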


\begin{proof}
From the above proposition, we know that there exists a prequantization for $\AA$ if and only if there exist a lift of the class $[\AA]$ under the following map
\[
H^{1}(\OO_{X}^{\times}(*D))\overset{\text{dlog}}{\rightarrow} H^{1}(\Omega_{X}^{1,\mathrm{cl}}(\log D))
\]
We show that this lift exists if and only if the restriction of $[\AA]$ to the complement $U$ has integral periods. Using Lemma~\ref{mero NS lemma}, we have the following isomorphism
\begin{equation*}
\mathrm{NS}(X,D)  \cong F^1H^2(U,\CC)\times_{H^{2}(U,\CC)}(W_3H^2(U,\CC)\times_{H^{2}(U,\CC)}H^2(U,\ZZ(1)) 
\end{equation*}
Consider the following commutative diagram
\[
\xymatrix{
 H^1(\Omega_X^{1,\mathrm{cl}}(\log D))\ar[r]^-\cong & F^1H^2(U,\CC)\cap W_3H^2(U,\CC) \\
 H^1(\OO_X^\times(*D))\ar[u]^{\mathrm{dlog}}\ar@{->>}[r]  & \mathrm{NS}(X,D)\ar[u]
}
\]
where the top horizontal isomorphism comes from the proof of Theorem~\ref{loc trivial log PA Hodge thm}. Suppose that $[\AA]$ has a lift under the dlog map, then the above commutative diagram shows that this class must have integral periods. Conversely, suppose that the restriction of $[\AA]$ to the complement $U$ has integral periods; by the above description of the meromorphic N\'eron-Severi group, this means that $[\AA]$ lies in the image of 
\[
\mathrm{NS}(X,D)\rightarrow F^1H^2(U,\CC)\cap W_3H^2(U,\CC).
\]
Hence, using the above diagram again, we obtain the existence of a lift of $[\AA]$ under the dlog map.

\end{proof}

\begin{Corollary}
The prequantizable log Picard algebroids on $(X,D)$ are classified by the meromorphic N\'eron-Severi group $\mathrm{NS}(X,D)$ modulo torsion.
\end{Corollary}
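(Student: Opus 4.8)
The plan is to deduce the statement directly from Theorem~\ref{log PA prequantization theorem} together with the Hodge-theoretic description of $\mathrm{NS}(X,D)$ supplied by Lemma~\ref{mero NS lemma}. By Theorem~\ref{loc trivial log PA Hodge thm}, the isomorphism classes of locally trivial log Picard algebroids on $(X,D)$ are identified, as a group under Baer sum, with
\[
H^{1}(\Omega_{X}^{1,\mathrm{cl}}(\log D)) \cong F^{1}H^{2}(U,\CC)\cap W_{3}H^{2}(U,\CC),
\]
and Theorem~\ref{log PA prequantization theorem} singles out, within this group, the subgroup $P$ of classes whose image in $H^{2}(U,\CC)$ lies in the image of the period map $H^{2}(U,\ZZ(1))\rightarrow H^{2}(U,\CC)$. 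That $P$ is indeed a subgroup is immediate, since Baer sum corresponds to addition of cohomology classes by Theorem~\ref{classification theorem} and integral lifts add. The corollary will follow once we produce a natural isomorphism $\mathrm{NS}(X,D)/\mathrm{tors}\xrightarrow{\ \sim\ }P$.

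First I would unwind the definition of $\mathrm{NS}(X,D)$. Substituting the identification
\[
W_{3}H^{2}(U,\ZZ(1))\cong W_{3}H^{2}(U,\CC)\times_{H^{2}(U,\CC)}H^{2}(U,\ZZ(1))
\]
of Lemma~\ref{mero NS lemma} into the fibre product defining $\mathrm{NS}(X,D)$, and using that $F^{1}H^{2}(U,\CC)$ and $W_{3}H^{2}(U,\CC)$ are subspaces of $H^{2}(U,\CC)$, one sees that $\mathrm{NS}(X,D)$ is canonically the subgroup of $H^{2}(U,\ZZ(1))$ consisting of those integral classes $c$ whose image in $H^{2}(U,\CC)$ lies in $F^{1}H^{2}(U,\CC)\cap W_{3}H^{2}(U,\CC)$. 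Let $\pi\colon \mathrm{NS}(X,D)\rightarrow F^{1}H^{2}(U,\CC)\cap W_{3}H^{2}(U,\CC)$ be the homomorphism induced by the coefficient map $H^{2}(U,\ZZ(1))\rightarrow H^{2}(U,\CC)$.

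Next I would compute the image and kernel of $\pi$. Its image is exactly $P$: the inclusion $\mathrm{im}\,\pi\subseteq P$ is clear, while for the reverse inclusion, if $\gamma\in F^{1}H^{2}(U,\CC)\cap W_{3}H^{2}(U,\CC)$ is the image of some $c\in H^{2}(U,\ZZ(1))$, then $c$ already lies in $\mathrm{NS}(X,D)$ by the description just obtained, and $\pi(c)=\gamma$. Its kernel is the set of $c\in\mathrm{NS}(X,D)$ mapping to zero in $H^{2}(U,\CC)$; since $U$ is a quasi-projective variety, $H^{2}(U,\ZZ(1))$ is finitely generated and the kernel of $H^{2}(U,\ZZ(1))\rightarrow H^{2}(U,\CC)$ is precisely its torsion subgroup, so $\ker\pi=\mathrm{NS}(X,D)\cap H^{2}(U,\ZZ(1))_{\mathrm{tors}}=\mathrm{NS}(X,D)_{\mathrm{tors}}$. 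Hence $\pi$ descends to an isomorphism $\mathrm{NS}(X,D)/\mathrm{tors}\xrightarrow{\ \sim\ }P$, which combined with the identifications of the first paragraph gives the claimed classification.

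Every step here is bookkeeping with the exact sequences and fibre products already assembled in the paper, and I do not expect a genuine obstacle. The only point that needs a word of justification is the identification $\ker\pi=\mathrm{NS}(X,D)_{\mathrm{tors}}$, which rests on the finite generation of the cohomology of $U$ (so that the kernel of reduction of coefficients to $\CC$ is exactly the torsion) and on the elementary fact that the torsion of a subgroup is its intersection with the torsion of the ambient group.
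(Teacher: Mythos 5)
Your argument is correct and is essentially the route the paper intends: the corollary is stated as an immediate consequence of Theorem~\ref{log PA prequantization theorem}, whose proof already identifies the prequantizable classes with the image of $\mathrm{NS}(X,D)\rightarrow F^{1}H^{2}(U,\CC)\cap W_{3}H^{2}(U,\CC)$, and your identification of the kernel of that map with the torsion subgroup (via finite generation of $H^{2}(U,\ZZ(1))$ and Lemma~\ref{mero NS lemma}) is exactly the missing bookkeeping, mirroring Corollary~\ref{PA prequantization corollary} in the divisor-free case.
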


\begin{Proposition}
The meromorphic line bundles prequantizing a fixed log Picard algebroid on $(X,D)$ form a torsor for the group $\mathcal{K}$ of meromorphic line bundles with torsion first Chern class in $U = X\setminus D$, which is an extension 
\[
0\rightarrow\text{Jac}(X,D)\rightarrow\mathcal{K}\rightarrow H^{2}(U,\ZZ(1))_{\text{tors}}\rightarrow0
\]
of the torsion group by the meromorphic Jacobian.
% 
%Let $D$ be a smooth divisor in $X$. There is an action on the group $H^{1,1}(U,\ZZ(1))$ of log Picard algebroids on $(X,D)$ admitting prequantization by the group of meromorphic line bundles on $(X,D)$ with flat logarithmic connection.
\end{Proposition}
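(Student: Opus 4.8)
The plan is to mimic the argument given earlier for the prequantization of usual Picard algebroids (the final Proposition of Subsection 3.5.1), transplanting it to the meromorphic setting via the exact sequence~\eqref{eq:cone for OX(D) les II}. The key input is the meromorphic exponential-type sequence
\[
\xymatrix{0\ar[r]&\CC_X^\times(*D)\ar[r]&\OO_X^\times(*D)\ar[r]^-{\mathrm{dlog}}&\Omega_X^{1,\mathrm{cl}}(\log D)\ar[r]&\cdots},
\]
where by $\CC_X^\times(*D)$ we mean the kernel of $\mathrm{dlog}$ on $\OO_X^\times(*D)$; this is precisely the subsheaf of $\OO_X^\times(*D)$ consisting of elements with vanishing logarithmic derivative, i.e.\ locally of the form $c\,f_1^{k_1}\cdots f_m^{k_m}$ with $c$ a nonzero constant. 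By Proposition~\ref{dlog PA mero prop}, prequantizations of a fixed log Picard algebroid $\AA$ are exactly the lifts of $[\AA]\in H^1(\Omega_X^{1,\mathrm{cl}}(\log D))$ under $\mathrm{dlog}_*$, and hence (when one such lift exists) form a torsor for the image $\mathcal K$ of the connecting-free map $H^1(\CC_X^\times(*D))\to H^1(\OO_X^\times(*D))$.

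Next I would identify $\mathcal K$ geometrically: it is the group of meromorphic line bundles admitting a \emph{flat} meromorphic connection, equivalently those whose associated log Picard algebroid is trivial, equivalently (by Proposition~\ref{dlog PA mero prop}) those $\MM$ with $\mathrm{dlog}_*[\MM]=0$. Restricting to $U=X\setminus D$, such a class has first Chern class $c_1(\MM)\in H^2(U,\ZZ(1))$ mapping to zero in $H^2(U,\CC)$, i.e.\ $c_1(\MM)$ is torsion; conversely, using the description of $\mathrm{NS}(X,D)$ and the sequence~\eqref{eq:cone for OX(D) les II} as in the proof of the classification theorem, a meromorphic line bundle with torsion $c_1$ in $U$ has $[\AA_\MM]$ lying in the kernel of $\mathrm{NS}(X,D)\to F^1H^2(U,\CC)\cap W_3H^2(U,\CC)$, hence is trivial as a log Picard algebroid and so lies in $\mathcal K$. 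This gives $\mathcal K=\ker\bigl(\mathrm{Pic}(X,D)\xrightarrow{c_1} \mathrm{NS}(X,D)\to F^1\cap W_3\bigr)$, which is the asserted group.

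For the extension, I would analyze $\mathcal K$ through the meromorphic exponential sequence~\eqref{eq:cone for OX(D) les II}. The composite $\mathrm{Pic}(X,D)\xrightarrow{c_1}W_3H^2(U,\ZZ(1))\to H^2(U,\ZZ(1))$ sends $\mathcal K$ into $H^2(U,\ZZ(1))_{\mathrm{tors}}$, and conversely every torsion class in $H^2(U,\ZZ(1))$ lies in $W_3$ and is killed by the map to $H^2(\OO_X)$ (being torsion, hence mapping to $0$ in $H^2(U,\CC)$), so by~\eqref{eq:cone for OX(D) les II} it lifts to $\mathrm{Pic}(X,D)$, and any such lift is in $\mathcal K$; thus $\mathcal K\twoheadrightarrow H^2(U,\ZZ(1))_{\mathrm{tors}}$ via $c_1$. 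The kernel of $c_1|_{\mathcal K}$ is by~\eqref{eq:cone for OX(D) les II} exactly the image of $\exp\colon H^1(\OO_X)\to\mathrm{Pic}(X,D)$, which is $H^1(\OO_X)/\mathrm{im}(H^1(U,\ZZ(1))\to H^1(\OO_X))=\mathrm{Jac}(X,D)$. Splicing these two facts yields the short exact sequence
\[
0\rightarrow\mathrm{Jac}(X,D)\rightarrow\mathcal K\rightarrow H^2(U,\ZZ(1))_{\mathrm{tors}}\rightarrow 0,
\]
and the torsor statement follows from the general principle that lifts under a group homomorphism, when nonempty, form a torsor under the kernel of the induced map on $H^1$.

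The main obstacle I anticipate is the careful bookkeeping in the identification $\mathcal K=\ker(c_1\colon\mathrm{Pic}(X,D)\to\mathrm{NS}(X,D)\to F^1H^2(U,\CC)\cap W_3H^2(U,\CC))$: one must chase the class $[\MM]$ simultaneously through the meromorphic exponential sequence~\eqref{eq:cone for OX(D) les II} and through $\mathrm{dlog}_*$, and verify that ``$[\AA_\MM]=0$ in $H^1(\Omega_X^{1,\mathrm{cl}}(\log D))$'' is equivalent to ``$c_1(\MM)|_U$ is torsion,'' using Lemma~\ref{mero NS lemma} and the diagram in the proof of Theorem~\ref{log PA prequantization theorem} to match the $\mathbb Z$- and $\CC$-coefficient pictures. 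Once that equivalence is in hand, the extension assertion is a routine diagram chase through~\eqref{eq:cone for OX(D) les II}, exactly parallel to the non-logarithmic case.
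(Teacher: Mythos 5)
Your route is viable and genuinely different from the paper's in how it identifies $\mathcal{K}$, but the first step as written contains an error. The sheaf you call $\CC_X^\times(*D)$ is not the kernel of $\dlog$ on $\OO_X^\times(*D)$: an element $c\,f_1^{k_1}\cdots f_m^{k_m}$ has logarithmic derivative $\sum_i k_i\,df_i/f_i$, which is a nonzero closed logarithmic form unless every $k_i=0$, so the kernel is just $\CC_X^\times$. More seriously, even with the correct kernel sheaf, the claim that lifts of $[\AA]$ form a torsor under the image of $H^1(\ker\dlog)\to H^1(\OO_X^\times(*D))$ is unjustified, because $\dlog$ is not surjective at the sheaf level (its cokernel is $\CC_D^\times$), so the three-term sequence $H^1(\ker\dlog)\to H^1(\OO_X^\times(*D))\to H^1(\Omega_X^{1,\mathrm{cl}}(\log D))$ need not be exact in the middle. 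This is precisely what the paper's proof addresses by using the mapping-cone quasi-isomorphism $\Omega_X^{1,\mathrm{cl}}(\log D)\sim\mathrm{Cone}(\sigma_{\leq 1}Rj_*\CC_U^\times\to\OO_X^\times(*D))$, which places $H^1(U,\CC^\times)$ --- not $H^1(X,\CC^\times)$ --- in the relevant position. The difference is real: for the cubic in $\PP^2$ one has $H^1(\PP^2,\CC^\times)=0$, yet $\mathcal{K}\cong\ZZ_3=H^2(U,\ZZ(1))_{\mathrm{tors}}$, so the image of $H^1$ of the true kernel sheaf is strictly smaller than $\ker(\dlog_*)$.

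Fortunately you never rely on that description downstream: from your second paragraph onward you work with $\mathcal{K}=\ker(\dlog_*)$ directly, which is the correct group for the torsor statement, and your identification of it is sound. Chasing $c_1(\MM)$ through the commutative diagram of Theorem~\ref{log PA prequantization theorem} together with Lemma~\ref{mero NS lemma} to show that $\dlog_*[\MM]=0$ if and only if $c_1(\MM)|_U$ is torsion, and then reading off the extension $0\to\mathrm{Jac}(X,D)\to\mathcal{K}\to H^2(U,\ZZ(1))_{\mathrm{tors}}\to 0$ from~\eqref{eq:cone for OX(D) les II}, is a clean alternative to the paper's argument, which instead computes $\ker(\dlog_*)$ as the image of $H^1(U,\CC^\times)$ and then applies the exponential sequence $0\to\ZZ(1)_U\to\CC_U\to\CC_U^\times\to 0$ to extract the same extension. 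Your version ties the result more directly to the N\'eron--Severi formalism already established for the classification of $\mathrm{Pic}(X,D)$; the paper's version additionally exhibits the acting group as coming from flat $\CC^\times$-local systems on $U$. If you repair the opening by simply defining $\mathcal{K}:=\ker(\dlog_*)$ (or by substituting the paper's mapping-cone sequence for your short exact sequence), the rest of your proof goes through.
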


\begin{proof}
We know that log Picard algebroids $\AA$ on $X$ that can be prequantized are the ones whose class $[\AA]$ can be lifted under the dlog map:
\[
H^{1}(\OO_{X}^{\times}(*D))\overset{\mathrm{dlog}}{\rightarrow} H^{1}(\Omega_{X}^{1,\mathrm{cl}}(\log D)).
\]
The sheaf $\Omega_X^{1,\mathrm{cl}}(\log D)$ fits into the exact sequence,
\[
0\rightarrow\CC_{X}^{\times}\rightarrow\OO_{X}^{\times}(*D)\overset{\text{dlog}}{\rightarrow}\Omega_{X}^{1,\text{cl}}(\log D)\rightarrow\CC_{D}^{\times}\rightarrow0
\]
which gives the quasi-isomorphism
\[
\Omega_{X}^{1,\mathrm{cl}}(\log D)\sim\mathrm{Cone}(\sigma_{\leq 1}Rj_{*}\CC_{U}^\times\rightarrow\OO_{X}^{\times}(*D)).
\]
The long exact sequence associated to the above mapping cone is then:
\[
\dots\rightarrow H^{0}(\Omega_{X}^{1,\text{cl}}(\log D))\rightarrow H^{1}(U,\CC^{\times})\rightarrow H^{1}(\OO_{X}^{\times}(*D))\overset{\text{dlog}}{\rightarrow} H^{1}(\Omega_{X}^{1,\text{cl}}(\log D))\rightarrow\dots,
\]
where we place $H^1(U,\CC^\times)$ in second position because $\HH^1(\sigma_{\leq 1}Rj_{*}\CC_{U}^\times) = \HH^1(Rj_{*}\CC_{U}^\times) = H^{1}(U,\CC^{\times})$.
%from the fact that $D$ is smooth. 
Therefore, two lifts of the class $[\AA]$ differ by a class in $H^{1}(\OO_{X}^{\times}(*D))$ lying in the image $\mathcal{K}$ of the map
\begin{equation}	\label{eq:image group II}
H^{1}(U,\CC^{\times})\rightarrow H^{1}(\OO_{X}^{\times}(*D)).
\end{equation}
%However, this map is not injective as there are $\CC^{\times}$-local systems on $U$ that are trivial as holomorphic line bundles which are given by $H^{0}(\Omega_{X}^{1,\text{cl}}(\log D))$, the group of global logarithmic 1-forms. We denote the image of the above map by $\mathcal{K}$ and we will now study this group. 
From the short exact sequence
\[
0\rightarrow\ZZ(1)_{U}\rightarrow\CC_{U}\rightarrow\CC_{U}^{\times}\rightarrow0,
\]
we obtain the following short exact sequence:
\[
0\rightarrow\frac{H^{1}(U,\CC)}{H^{1}(U,\ZZ(1))}\rightarrow H^{1}(U,\CC^{\times})\rightarrow H^{2}(U,\ZZ)_{\text{tors}}\rightarrow0.
\]
Combining this with the long exact sequence given in~\eqref{eq:cone for OX(D) les II}:
\[
\dots\rightarrow H^{1}(U,\ZZ(1))\rightarrow H^{1}(\OO_{X})\rightarrow H^{1}(\OO_{X}^{\times}(*D))\rightarrow W_{3}H^{2}(U,\ZZ(1))\rightarrow\dots
\]
%where $W_3H^2(U,\ZZ(1))=H^2(U,\ZZ(1))$ as $D$ is smooth. 
we find that the image of the map \eqref{eq:image group II} surjects onto $H^{2}(U,\ZZ(1))_{\text{tors}}$, which is contained in $W_{3}H^{2}(U,\ZZ(1))$, with kernel
\[
\frac{H^{1}(\OO_{X})}{\text{im}(H^{1}(U,\ZZ(1))\rightarrow H^{1}(\OO_{X}))}.
\]
This is precisely the meromorphic Jacobian $\text{Jac}(X,D)$ of $(X,D)$, giving the result.
%\[
%0\rightarrow\text{Jac}(X,D)\rightarrow\mathcal{K}\rightarrow H^{2}(U,\ZZ(1))_{\text{tors}}\rightarrow0
%\]
%The group $\mathcal{K}$ is exactly the group of meromorphic line bundles on $(X,D)$ with a flat logarithmic connection.
\end{proof}

\begin{Example}
We now give an important non-example: an integral 2-form with the wrong Hodge weight (weight 4 in this case) and which therefore does not correspond to a meromorphic line bundle. Consider $(\PP^2,D)$ where $D$ is the union of three projective lines in triangle position as in Example~\ref{Deligne line bundle example}. The divisor complement is $U=\CC^*\times\CC^* = \mathrm{Spec}\,\CC[x^{\pm 1},y^{\pm 1}]$ and so has $H^{2}(U,\ZZ) \cong \ZZ$.  This group is generated by the first Chern class of the Deligne line bundle~\cite{Deligne91} on $U$, which may be defined as follows (see \cite{Brylinski00}). 
 %Let $(f,g)$ be invertible functions with $f=x$ and $g=y$. 
Express $U$ as the quotient of $\tilde{U} = \CC^{*}\times \CC = \mathrm{Spec}\,\CC[x^{\pm 1}, w = \log y]$ by the $\ZZ$-action generated by $(x, w)\sim (x, w+2\pi i)$, and consider the trivial line bundle $\tilde{U}\times \CC$ equipped with the $\ZZ$-equivariant structure 
\[
((x,w),t)\sim ((x,w+2\pi i), x^{-1}t).
\]
This descends to the required holomorphic line bundle $\LL_{(x,y)}$ over $U$.  The connection
\[
\nabla = d + \tfrac{1}{2\pi i} w x^{-1}dx
\] 
descends to a connection on $\LL_{(x,y)}$, with first Chern form 
\[
-\tfrac{1}{4\pi^{2}}x^{-1}dx\wedge y^{-1}dy,
\]
a generator of $H^{2}(U,\ZZ)$ as claimed.
%
% Consider the trivial line bundle on the covering space $\CC\times\CC^*$ of $U$ with covering map given by $\pi=\mathrm{Id}\times\exp:\CC\times\CC^*\to U$. The group of deck transformations is given by $\ZZ$ and the generator $\gamma$ of this group acts on $(x,w)$ by $(x,w) \mapsto (x,w+2\pi i)$. We let the generator $\gamma$ act on the trivial line bundle on the covering space $\CC\times\CC^*$ by multiplication by the function $x^{-1}$. If we descend the trivial line bundle via this action to $U$, we obtain the Deligne line bundle on $U$ which we denote here as $\LL_{(f,g)}$. 
%
%A section of the Deligne line bundle $\LL_{(f,g)}$ over some open set $V\subset\CC^*\times\CC^*$ is a function $h:\pi^{-1}(V)\to\CC$ such that $h(x,w+2\pi i)=x^{-1}h(x,w)$. There is a connection $\nabla$ on $\LL_{(f,g)}$ given by $\nabla(h) = dh + \frac{1}{2\pi i}wx^{-1}h$ and the curvature of this connection is given by the closed log 2-form:
%\[
%\frac{1}{(2\pi i)^2}\frac{dx}{x}\wedge\frac{dy}{y}.
%\]
In Example~\ref{Deligne line bundle example}, we used this closed log 2-form to twist the Lie bracket on $\TT_{\PP^2}(-\log D)\oplus\OO_{\PP^{2}}$ to obtain a non-trivial log Picard algebroid on $(\PP^2,D)$. The meromorphic N\'eron-Severi group $\mathrm{NS}(\PP^2,D)=0$ and so by the above corollary, this log Picard algebroid cannot be prequantized to a meromorphic line bundle. In fact, if we use the sequence in~\eqref{eq:cone for OX(D) les II}, we find that the meromorphic Picard group is trivial, $\mathrm{Pic}(\PP^2,D)=0$. Thus, we find that the Deligne line bundle $\LL_{(f,g)}$ is an example of a non-trivial holomorphic line bundle on the complement $U$ which does not extend to a meromorphic line bundle on $(\PP^2,D)$.
\end{Example}

\subsection{Functorial properties and blowup}\label{blowupsec}
Let $\varphi:Y\rightarrow X$ be a morphism of projective varieties and $D$ a simple normal crossings divisor on $X$ such that $\varphi^{*}D$ is also a simple normal crossings divisor on $Y$. We define the pullback of meromorphic line bundles:

\begin{Definition}
Let $\mathcal{L}$ be a meromorphic line bundle on $(X,D)$ given by transition functions $\{g_{ij}\}\in H^{1}(\OO_{X}^{\times}(*D))$, the \textit{\textbf{pullback meromorphic line bundle}} $\varphi^{*}\mathcal{L}$ of $\mathcal{L}$ is the locally-free $\OO_{Y}(*\varphi^{*}D)$-module $\varphi^{*}\mathcal{L}$ with transition functions $\{\varphi^{*}g_{ij}\}\in H^{1}(\OO_{Y}^{\times}(*\varphi^{*}D))$.
\end{Definition}

We now prove an analogue of Theorem \ref{log PA blowup thm} in the context of meromorphic line bundles. This will be necessary when we give explicit constructions of meromorphic line bundles in Section \ref{sec 4}. 
%To do so, we begin with the following lemma:

\begin{Lemma}
Let $X$ be a smooth projective algebraic surface with first Betti number $b_{1}(X)=0$ and let $\pi:\widetilde{X}\rightarrow X$ be the blowup of $X$ at a point $p\in X$. Then the analytic Brauer groups of $X$ and $\widetilde{X}$ coincide.
\end{Lemma}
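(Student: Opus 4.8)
The plan is to compare the two Brauer groups through the exponential sheaf sequences and the short five lemma, the point being that the only new cohomology class created by the blowup is algebraic and hence invisible to the Brauer group. For any smooth projective surface $Z$, the exponential sequence $0\to\ZZ(1)\to\OO_{Z}\to\OO_{Z}^{\times}\to 0$ together with $H^{3}(\OO_{Z})=0$ (since $\dim_{\CC}Z=2$) yields the exact sequence
\[
H^{2}(Z,\ZZ)\to H^{2}(\OO_{Z})\to H^{2}(\OO_{Z}^{\times})\to H^{3}(Z,\ZZ)\to 0,
\]
so that the analytic Brauer group $H^{2}(\OO_{Z}^{\times})$ fits into a short exact sequence
\[
0\to\mathrm{coker}\big(H^{2}(Z,\ZZ)\to H^{2}(\OO_{Z})\big)\to H^{2}(\OO_{Z}^{\times})\to H^{3}(Z,\ZZ)\to 0.
\]
Applying this to $Z=X$ and to $Z=\widetilde{X}$, and using that $\pi$ induces a morphism of exponential sequences and hence a morphism between the two resulting short exact sequences, it suffices to prove that $\pi^{*}$ is an isomorphism on both outer terms.

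For the right-hand terms, the blowup of a smooth surface at a point satisfies the topological formula $H^{k}(\widetilde{X},\ZZ)\cong H^{k}(X,\ZZ)\oplus H^{k-2}(\mathrm{pt},\ZZ)$; in particular $\pi^{*}\colon H^{3}(X,\ZZ)\to H^{3}(\widetilde{X},\ZZ)$ is an isomorphism, while $H^{2}(\widetilde{X},\ZZ)=\pi^{*}H^{2}(X,\ZZ)\oplus\ZZ[E]$, where $E$ is the exceptional divisor. For the left-hand terms, since blowing up a smooth point gives $\pi_{*}\OO_{\widetilde{X}}=\OO_{X}$ and $R^{q}\pi_{*}\OO_{\widetilde{X}}=0$ for $q>0$, the Leray spectral sequence gives $\pi^{*}\colon H^{i}(\OO_{X})\overset{\cong}{\rightarrow}H^{i}(\OO_{\widetilde{X}})$ for all $i$. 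It then remains to check that $\pi^{*}$ carries the image of $H^{2}(X,\ZZ)$ in $H^{2}(\OO_{X})$ onto the image of $H^{2}(\widetilde{X},\ZZ)$ in $H^{2}(\OO_{\widetilde{X}})$. Naturality of the exponential sequence handles the summand $\pi^{*}H^{2}(X,\ZZ)$, and for the remaining generator we note that $[E]=c_{1}(\OO_{\widetilde{X}}(E))$ lies in the image of $c_{1}\colon\mathrm{Pic}(\widetilde{X})\to H^{2}(\widetilde{X},\ZZ)$, which by exactness of the exponential sequence equals $\ker\big(H^{2}(\widetilde{X},\ZZ)\to H^{2}(\OO_{\widetilde{X}})\big)$; hence $[E]$ maps to zero in $H^{2}(\OO_{\widetilde{X}})$. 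Thus $\pi^{*}$ identifies the two cokernels.

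Having shown that both outer vertical maps are isomorphisms, the short five lemma applied to the ladder of short exact sequences shows that $\pi^{*}\colon H^{2}(\OO_{X}^{\times})\to H^{2}(\OO_{\widetilde{X}}^{\times})$ is an isomorphism, which is the claim. The only inputs requiring proof are the vanishing $R^{q}\pi_{*}\OO_{\widetilde{X}}=0$ for $q>0$ and the blowup formula in integral cohomology; both are standard, so I expect the only place needing care to be the bookkeeping of naturality in the ladder, rather than any genuine obstacle.
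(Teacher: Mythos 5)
Your proof is correct, and it follows the same basic strategy as the paper (exponential sheaf sequences on $X$ and $\widetilde X$, the blowup formulas for $H^{2}(\ZZ)$, $H^{i}(\OO)$ and $\mathrm{Pic}$, and a commutative ladder), but the way you package the diagram chase is genuinely different and slightly more general. The paper first uses the hypothesis $b_{1}(X)=0$ to kill $H^{1}(\OO_{X})$ and $H^{3}(X,\ZZ)$, so that the exponential sequence collapses to the four-term exact sequence $0\to\mathrm{Pic}\to H^{2}(\ZZ)\to H^{2}(\OO)\to H^{2}(\OO^{\times})\to 0$ on both surfaces, and then compares these termwise; the class $[E]$ is absorbed there by the extra $\ZZ$ summands in both $\mathrm{Pic}(\widetilde X)$ and $H^{2}(\widetilde X,\ZZ)$. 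You instead keep the five-term portion ending in $H^{3}(Z,\ZZ)\to 0$, present the Brauer group as an extension of $H^{3}(Z,\ZZ)$ by the cokernel of $H^{2}(Z,\ZZ)\to H^{2}(\OO_{Z})$, check that $\pi^{*}$ is an isomorphism on both outer terms (using $R^{q}\pi_{*}\OO_{\widetilde X}=0$, the blowup formula, and the fact that $[E]$ is algebraic and hence dies in $H^{2}(\OO_{\widetilde X})$), and finish with the short five lemma. This buys you two things: you never use $b_{1}(X)=0$, so your argument shows the invariance of the analytic Brauer group under point blowups for an arbitrary smooth projective surface; and you avoid the paper's appeal to Poincar\'e duality to conclude $H^{3}(X,\ZZ)=0$ from $b_{3}(X)=0$, which in integral cohomology requires a word about torsion ($H^{3}(X,\ZZ)\cong H_{1}(X,\ZZ)$ may have torsion even when $b_{1}=0$) --- a subtlety your formulation sidesteps entirely since you only need $\pi^{*}$ to be an isomorphism on $H^{3}(\cdot,\ZZ)$, not that it vanish. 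The only inputs you defer, the vanishing of $R^{q}\pi_{*}\OO_{\widetilde X}$ and the integral blowup formula, are indeed standard.
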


\begin{proof}
Since $b_{1}(X)=0$, we have $H^{1}(\OO_{X})=0$. Moreover, by Poincar\'e duality, we have $H^{3}(X,\ZZ)=0$. Hence, we can write the long exact sequence associated to the exponential sheaf sequence on $X$ as follows
\begin{equation}	\label{eq:exp sequence for X}
0\rightarrow\mathrm{Pic}(X)\rightarrow H^{2}(X,\ZZ)\rightarrow H^{2}(\OO_{X})\rightarrow H^{2}(\OO_{X}^{\times})\rightarrow0
\end{equation}
Recall again that the first Betti number is a birational invariant for smooth projective surfaces which means that we analogously have the exact sequence for $\widetilde{X}$
\[
0\rightarrow\mathrm{Pic}(\widetilde{X})\rightarrow H^{2}(\widetilde{X},\ZZ)\rightarrow H^{2}(\OO_{\widetilde{X}})\rightarrow H^{2}(\OO_{\widetilde{X}}^{\times})\rightarrow0
\]
We have $\mathrm{Pic}(\widetilde{X})=\mathrm{Pic}(X)\oplus\ZZ$ and $H^{2}(\widetilde{X},\ZZ)=H^{2}(X,\ZZ)\oplus\ZZ$. Moreover, by the fact that the plurigenus is a birational invariant along with Serre duality, we have $H^{2}(\OO_{\widetilde{X}})=H^{2}(\OO_{X})$. Hence, we can re-write the above sequence as
\[
0\rightarrow\mathrm{Pic}(X)\oplus\ZZ\rightarrow H^{2}(X,\ZZ)\oplus\ZZ\rightarrow H^{2}(\OO_{X})\rightarrow H^{2}(\OO_{\widetilde{X}}^{\times})\rightarrow0
\]
Therefore, we can fit the above sequence and the sequence in \eqref{eq:exp sequence for X} in the commutative diagram
\[
\xymatrix{
0\ar[r] & \mathrm{Pic}(X)\ar[r]\ar[d] & H^{2}(X,\ZZ)\ar[r]\ar[d] & H^{2}(\OO_{X})\ar[r]\ar@{=}[d] & H^{2}(\OO_{X}^{\times})\ar[r]\ar[d] & 0 \\
0\ar[r] & \mathrm{Pic}(X)\oplus\ZZ\ar[r] &  H^{2}(X,\ZZ)\oplus\ZZ\ar[r] &  H^{2}(\OO_{X})\ar[r] &  H^{2}(\OO_{\widetilde{X}}^{\times})\ar[r] & 0}
\]
where all the vertical arrows are induced by the pullback $\pi^{*}$. This shows that the map
\[
H^{2}(\OO_{X}^{\times})\rightarrow H^{2}(\OO_{\widetilde{X}}^{\times})
\]
is an isomorphism, as required.
\end{proof}

\begin{Theorem}	\label{mero line bundle blowup thm} %\marco{smoothness?}
Let $X$ be a smooth projective algebraic surface $X$ with $b_{1}(X)=0$, $D$ a simple normal crossings divisor in $X$, and $\pi:\widetilde{X}\rightarrow X$ the blowup of $X$ at a point $p\in D$. The pullback of meromorphic line bundles defines an equivalence between the meromorphic Picard group $\mathrm{Pic}(X,D)$ on $(X,D)$ and the meromorphic Picard group on $\mathrm{Pic}(\widetilde{X},\pi^{*}D)$ of $(\widetilde{X},\pi^{*}D)$ where $\pi^{*}D:=D+E$ and $E$ is the exceptional divisor of the blowup.
\end{Theorem}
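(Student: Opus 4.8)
The plan is to mirror the proof of Theorem~\ref{log PA blowup thm}, but to replace the Hodge-theoretic classification of log Picard algebroids by the meromorphic exponential sequence~\eqref{eq:cone for OX(D) les II}. The first point to record is that the divisor complement is unchanged: since $\pi$ is an isomorphism over $X\setminus\{p\}$ and $\pi^{-1}(p)=E\subset\pi^{*}D$, we have $\widetilde X\setminus\pi^{*}D=X\setminus D=U$. Pulling back transition functions gives the homomorphism $\pi^{*}\colon\mathrm{Pic}(X,D)\to\mathrm{Pic}(\widetilde X,\pi^{*}D)$ we want to show is an isomorphism, and it is induced by the sheaf map $\pi^{-1}\OO_{X}^{\times}(*D)\to\OO_{\widetilde X}^{\times}(*\pi^{*}D)$, hence is compatible with the whole sequence~\eqref{eq:cone for OX(D) les II} for the two pairs. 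So I would begin by setting up the commutative ladder comparing
\[
H^{1}(U,\ZZ(1))\to H^{1}(\OO_{X})\to\mathrm{Pic}(X,D)\to W_{3}H^{2}(U,\ZZ(1))\to H^{2}(\OO_{X})
\]
with the analogous sequence for $(\widetilde X,\pi^{*}D)$, the vertical maps being induced by $\pi^{*}$.

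Next I would use the hypothesis $b_{1}(X)=0$: for a smooth projective surface this gives $h^{0,1}=0$, hence $H^{1}(\OO_{X})=0$, and since blowing up a point does not change $b_{1}$, the same holds for $\widetilde X$. The ladder then collapses to identify $\mathrm{Pic}(X,D)$ with $\ker\big(W_{3}H^{2}(U,\ZZ(1))\to H^{2}(\OO_{X})\big)$ and $\mathrm{Pic}(\widetilde X,\pi^{*}D)$ with $\ker\big(W_{3}H^{2}(U,\ZZ(1))\to H^{2}(\OO_{\widetilde X})\big)$, where a priori the two occurrences of $W_{3}H^{2}(U,\ZZ(1))$ are those defined from the two compactifications. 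It then remains to check that the two surviving vertical maps are isomorphisms. For the right-hand map this is standard: $R\pi_{*}\OO_{\widetilde X}=\OO_{X}$ for the blowup of a smooth point, so $\pi^{*}\colon H^{2}(\OO_{X})\to H^{2}(\OO_{\widetilde X})$ is an isomorphism---equivalently, by birational invariance of the plurigenus together with Serre duality, exactly as in the proof of Theorem~\ref{log PA blowup thm}.

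The crux, and the step I expect to be the main obstacle, is the middle map: one must show that the integral weight-three subgroup $W_{3}H^{2}(U,\ZZ(1))$ is independent of the compactification of $U$ used to define it. This is not transparent, since the complex $Rj_{*}\ZZ_{U}$ and its canonical filtration live on $X$ (respectively $\widetilde X$) and genuinely differ. To resolve it I would invoke Lemma~\ref{mero NS lemma}, which realizes $W_{3}H^{2}(U,\ZZ(1))$ as the fibre product $W_{3}H^{2}(U,\CC)\times_{H^{2}(U,\CC)}H^{2}(U,\ZZ(1))$ sitting inside $H^{2}(U,\ZZ(1))$ via~\eqref{eq:W3 as hyperH2}. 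Now $H^{2}(U,\ZZ(1))$ is intrinsic to $U$, and the complex weight filtration $W_{3}H^{2}(U,\CC)$ on the fixed vector space $H^{2}(U,\CC)$ is independent of the smooth normal-crossings compactification by~\cite[3.2.11]{Deligne71}; hence the two fibre products coincide as subgroups of $H^{2}(U,\ZZ(1))$, and since $\pi$ restricts to the identity on $U$, the map between them induced by $\pi^{*}$ is the identity.

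With the right-hand and middle vertical maps identified, a short diagram chase finishes the argument: the kernels $\ker\big(W_{3}H^{2}(U,\ZZ(1))\to H^{2}(\OO_{X})\big)$ and $\ker\big(W_{3}H^{2}(U,\ZZ(1))\to H^{2}(\OO_{\widetilde X})\big)$ coincide, and commutativity of the adjacent square shows that $\pi^{*}\colon\mathrm{Pic}(X,D)\to\mathrm{Pic}(\widetilde X,\pi^{*}D)$ is the inclusion of these coinciding kernels, hence an isomorphism. One could alternatively run the comparison through the residue sequence~\eqref{eq: mero line bundle sequence} and the preceding lemma on Brauer groups, but the route above seems the most direct.
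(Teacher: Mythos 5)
Your argument is correct, but it follows a genuinely different route from the paper's. The paper proves this theorem via the order-map sequence: it takes the four-term exact sequence $0\to\mathrm{Pic}^{\mathrm{alg}}(U)\to\mathrm{Pic}(\cdot,\cdot)\to H^{1}(D,\ZZ)\to H^{2}(\OO^{\times})$ of Theorem~\ref{mero picard extension thm} for both $(X,D)$ and $(\widetilde{X},D+E)$, kills $H^{1}(E,\ZZ)$ because $E\cong\PP^{1}$, invokes the preceding lemma that the analytic Brauer group is a blowup invariant when $b_{1}(X)=0$, and concludes by a diagram chase --- exactly the ``alternative'' you mention in your last sentence. Your route instead runs through the meromorphic exponential sequence~\eqref{eq:cone for OX(D) les II}: with $H^{1}(\OO)=0$ on both surfaces, each meromorphic Picard group is the kernel of $W_{3}H^{2}(U,\ZZ(1))\to H^{2}(\OO)$, and you reduce everything to the invariance of $H^{2}(\OO)$ under blowup and to the compactification-independence of $W_{3}H^{2}(U,\ZZ(1))$, which you correctly identify as the crux and settle via the fibre-product description of Lemma~\ref{mero NS lemma} together with Deligne's independence result for the complex weight filtration. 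What your approach buys is that it makes fully explicit a point the paper only treats for complex coefficients (in Theorem~\ref{log PA blowup thm}), namely that the \emph{integral} weight-three subgroup is intrinsic to $U$; what the paper's approach buys is that it avoids the mixed Hodge machinery entirely, at the cost of the separate Brauer-group lemma. Two small points you should make explicit: Lemma~\ref{mero NS lemma} is stated in the paper under a running smoothness assumption on $D$, so you should note that its proof (injectivity of $H^{0}(\ZZ_{D^{(2)}})\to H^{0}(\CC_{D^{(2)}})$) goes through verbatim for the simple normal crossings divisor $D+E$ on $\widetilde{X}$; and the commutativity of your ladder rests on the functoriality of the mapping cone $\mathrm{Cone}(\sigma_{\leq1}Rj_{*}\ZZ(1)_{U}\to\OO)$ under the adjunction map $\pi^{-1}Rj_{*}\to R\widetilde{\jmath}_{*}$, which is routine but worth a sentence.
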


\begin{proof}
Let $\mathcal{L}$ be a meromorphic line bundle on $(X,D)$, the assignment $\mathcal{L}\mapsto\varphi^{*}\mathcal{L}$ induces the map in cohomology
\[
\mathrm{Pic}(X,D)\rightarrow\mathrm{Pic}(\widetilde{X},\pi^{*}D)
\]
We wish to show that this map is an isomorphism. First, we realize that the blowup of the point $p$ on the divisor $D$ does not change the complement $U$. Using Theorem \ref{mero picard extension thm}, we have the exact sequence
\[
0\rightarrow\mathrm{Pic}^{\mathrm{alg}}(U)\rightarrow\mathrm{Pic}(\widetilde{X},\pi^{*}D)\rightarrow H^{1}(D,\ZZ)\oplus H^{1}(E,\ZZ)\rightarrow H^{2}(\mathcal{O}_{\widetilde{X}}^{\times})
\]
As $E\cong\mathbb{P}^{1}$, we have $H^{1}(E,\ZZ)=0$. Furthermore, from the above lemma, we have that $H^{2}(\OO_{\widetilde{X}}^{\times})$ is isomorphic to $H^{2}(\OO_{X}^{\times})$ so we can re-write the above sequence as
\begin{equation}	\label{eq:mero sequence for blowup}
0\rightarrow\mathrm{Pic}^{\mathrm{alg}}(U)\rightarrow\mathrm{Pic}(\widetilde{X},\pi^{*}D)\rightarrow H^{1}(D,\mathbb{Z})\rightarrow H^{2}(\OO_{X}^{\times})
\end{equation}
Recall from \eqref{eq: mero line bundle sequence II}, we have
\[
0\rightarrow\mathrm{Pic}^{\mathrm{alg}}(U)\rightarrow\mathrm{Pic}(X,D)\rightarrow H^{1}(D,\mathbb{Z})\rightarrow H^{2}(\OO_{X}^{\times})
\]
We can fit the above sequence and the sequence in \eqref{eq:mero sequence for blowup} to obtain the commutative diagram:
\[
\xymatrix{
0\ar[r] & \mathrm{Pic}^{\mathrm{alg}}(U)\ar[r]\ar@{=}[d] & \mathrm{Pic}(X,D)\ar[r]\ar[d] & H^{1}(D,\mathbb{Z})\ar[r]\ar@{=}[d] & H^{2}(\OO_{X}^{\times})\ar@{=}[d] \\
0\ar[r] & \mathrm{Pic}^{\mathrm{alg}}(U)\ar[r] & \mathrm{Pic}(\widetilde{X},\pi^{*}D)\ar[r] & H^{1}(D,\mathbb{Z})\ar[r] & H^{2}(\OO_{X}^{\times}) }
\]
This proves that the map
\[
\mathrm{Pic}(X,D)\rightarrow\mathrm{Pic}(\widetilde{X},\pi^{*}D)
\]
is an isomorphism which concludes the proof.
\end{proof}

\section{Explicit constructions} \label{sec 4}

In this section, we provide a method for constructing meromorphic line bundles on certain ruled surfaces equipped with simple normal crossing divisors. We then employ the method, together with the blowup constructions from Section~\ref{blowupsec} to construct all meromorphic line bundles on $\CC P^2$ with singularities along a smooth cubic curve.

\subsection{Meromorphic line bundles on Hirzebruch surfaces}

Let $\pi:X\rightarrow\mathbb{P}^{1}$ be a projective line bundle, and let $D\subset X$ be a simple normal crossings divisor which is finite over the base.  We define the discriminant locus $\Delta\subset\mathbb{P}^{1}$ to be the union of the critical values of $\pi|_{D}:D\rightarrow\mathbb{P}^{1}$ and $\pi(D^{\mathrm{sing}})$, the image of the singular points of $D$.  

As described in Figure~\ref{covering}, choose an oriented simple closed curve $\gamma\subset\mathbb{P}^{1}\backslash\Delta$, and denote by $\{V_0,V_1\}$ an open covering of $\CC P^1$ such that $V_0$ contains the oriented interior of $\gamma$, $V_1$ contains the exterior, and $V_0\cap V_1$ is an annular neighbourhood $A_\gamma$ of $\gamma$ with no intersection with $\Delta$.  The inverse images $U_0=\pi^{-1}(V_0)$ and $U_1=\pi^{-1}(V_1)$ then form an open covering of $X$ with intersection $U_0\cap U_1 = \pi^{-1}(A_\gamma)$.

\begin{figure}[h]
\begin{tikzpicture}[scale=.5]
\begin{scope}[thick,decoration={
    markings,
    mark=at position 0.5 with {\arrow{>}}}
    ] 
\draw[postaction={decorate}] (48-49,-2) to [closed,curve through={(50-49,0) .. (52-49,1) .. (51-49,3) .. (47-49,3) .. (46-49.6,2) .. (45-49,0.8) }] (48-49,-2);
\end{scope}
\draw[line width=12,opacity=.1,] (48-49,-2) to[closed,curve through={(50-49,0) .. (52-49,1) .. (51-49,3) .. (47-49,3) .. (46-49.6,2) .. (45-49,0.8) }] (48-49,-2);
\node at (44-49,0){$\gamma$};
\node at (47-49,-1){$\times$};
\node at (48-49,2){$\times$};
\node at (53-49,3){$\times$};
\node at (51-49,-2){$V_1$};
\node at (54-49,-1.5){$\times$};
\node at (56-49,0.6){$\times$};
\node at (-1.5,0.5){$V_0$};
\end{tikzpicture}
\caption{A covering of $\CC P^1$ by discs $V_0,V_1$ intersecting in an annular neighbourhood of an oriented simple closed curve $\gamma$. The annulus avoids the discriminant locus.}\label{covering}
\end{figure}
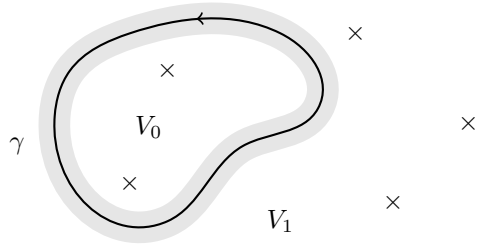

In this way, the choice of the simple closed curve $\gamma\subset\mathbb{P}^{1}\backslash\Delta$ defines an analytic open covering of $X$. We now construct a meromorphic line bundle by specifying a \v{C}ech cocycle for this covering.  That is, we require a function
\[
g_{01}\in \mathcal{O}_{X}^{\times}(*D) (U_0\cap U_1).
\]

As the annulus $A_{\gamma}$ does not intersect $\Delta$, the restriction $D_{01} = D\cap (U_0\cap U_1)$ is an unramified covering of $A_\gamma$. 
We now assume that there are two disjoint connected components $\gamma^+, \gamma^-$ of $D\cap \pi^{-1}(\gamma)$ which are isomorphic as covering spaces of $\gamma$; as a result they lie on linearly equivalent connected components of $D_{01}$ cut out by holomorphic sections $s_{+}, s_-$ respectively.  The resulting meromorphic function 
\[
g_{01} = \frac{s_{+}}{s_{-}}
\]
gives the required cocycle. We denote the corresponding meromorphic line bundle by $\mathcal{M}(\gamma^+,\gamma^-)$.

In this situation, the analytic Brauer group of $X$ vanishes, so by Theorem \ref{mero picard extension thm}, the meromorphic Picard group $\mathrm{Pic}(X,D)$ maps surjectively to the moduli of $\ZZ$-local systems on $D$ via the order map:
\begin{equation}	\label{eq:mero line bundle to Z-local system}
\mathrm{Pic}(X,D)\overset{\mathrm{ord}_{*}}{\rightarrow} H^{1}(D,\ZZ)
\end{equation}
The image of $\mathcal{M}(\gamma^+,\gamma^-)$ under the above order map is a $\ZZ$-local system on $D$ described by the 1-cocycle which is  $+1$ on $D^{+}$, $-1$ on $D^{-}$, and vanishes on any other components of $D\cap (U_0\cap U_1)$. Equivalently, for any 1-cycle $\beta$ in $D$, 
\[
\left<\mathrm{ord}_*(\mathcal{M}(\gamma^+,\gamma^-)), [\beta] \right> =  \gamma^{+}\cdot\beta -\gamma^{-}\cdot\beta, 
\]
where $\gamma^{\pm}=\pi^{-1}(\gamma)\cap D^{\pm}$ are equipped with the orientation inherited from $\gamma$.
We summarize this construction in the following theorem. 

\begin{Theorem}\label{mero line bundle local system theorem}
Given an oriented simple closed curve $\gamma\subset\mathbb{P}^{1}\backslash\Delta$ having  disjoint connected components $\gamma^+, \gamma^-$ of $D\cap \pi^{-1}(\gamma)$ of the same degree over $\gamma$, the above construction defines a meromorphic line bundle $\mathcal{M}(\gamma^+,\gamma^-)$ whose corresponding local system is Poincar\'e dual to $[\gamma^+]-[\gamma^-]$, i.e.,
\[
\mathrm{ord}_*\mathcal{M}(\gamma^+,\gamma^-) = \mathrm{PD}([\gamma^+] - [\gamma^-]).
\]
\end{Theorem}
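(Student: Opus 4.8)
The plan is to identify $\mathrm{ord}_*\mathcal{M}(\gamma^+,\gamma^-)$ explicitly as a \v{C}ech cocycle on $D$ for the two-chart cover $\{U_0,U_1\}$, and then to pin down its cohomology class by pairing with $H_1(D,\ZZ)$. First I would observe that, since $\mathcal{M}(\gamma^+,\gamma^-)$ is presented by the single transition function $g_{01}=s_+/s_-$ over $U_0\cap U_1$, the class $\mathrm{ord}_*\mathcal{M}(\gamma^+,\gamma^-)$ induced by the order sequence~\eqref{eq:O*(D) ses} is represented by $\mathrm{ord}(g_{01})=\mathrm{ord}(s_+)-\mathrm{ord}(s_-)$, a locally constant $\ZZ$-valued function on $D_{01}=D\cap(U_0\cap U_1)$. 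Because $\gamma$ avoids the discriminant $\Delta$, the divisor $D$ is smooth along $\pi^{-1}(\gamma)$ and $s_\pm$ cuts out with multiplicity one the connected component $D^\pm$ of $D_{01}$ containing $\gamma^\pm$; hence $\mathrm{ord}(g_{01})$ equals $+1$ on $D^+$, $-1$ on $D^-$, and $0$ on every other component of $D_{01}$. This is precisely the \v{C}ech $1$-cocycle describing the $\ZZ$-local system $\mathrm{ord}(\mathcal{M}(\gamma^+,\gamma^-))$ as the gluing of $\ZZ|_{D_0}$ to $\ZZ|_{D_1}$ across $D_{01}$ by this translation, and it makes precise the description given in the paragraph preceding the theorem.

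I would then evaluate this class against an arbitrary $1$-cycle $\beta$ in $D$. By transversality one may take $\beta$ to be a smooth loop avoiding $D^{\mathrm{sing}}$ with $\pi\circ\beta$ transverse to $\gamma\subset\mathbb{P}^{1}$; after shrinking the annulus $A_\gamma$, the chart-switches of $\beta$ between $U_0$ and $U_1$ then occur exactly at the points where $\pi\circ\beta$ crosses $\gamma$. Since the $\ZZ$-local system is canonically trivial over each of $D_0$ and $D_1$, its monodromy around $\beta$ is the signed sum of the transition value $\mathrm{ord}(g_{01})$ over these switches, so only crossings lying on $\gamma^+$ or $\gamma^-$ contribute. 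Near such a crossing point $q$ the projection restricts to a local biholomorphism $D\to\mathbb{P}^{1}$ (as $\pi|_D$ is unramified and $D$ is smooth over $\mathbb{P}^{1}\setminus\Delta$) carrying $\gamma^\pm$ to $\gamma$ compatibly with the inherited orientation and $\beta$ to $\pi\circ\beta$; hence the contribution of $q$ is the local intersection sign of $\beta$ with $\gamma^+$ in the surface $D$ (respectively, minus that with $\gamma^-$). Summing gives
\[
\langle\mathrm{ord}_*\mathcal{M}(\gamma^+,\gamma^-),[\beta]\rangle=\gamma^+\cdot\beta-\gamma^-\cdot\beta,
\]
the intersection numbers being taken in $D$.

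To conclude, I would identify $H^{\bullet}(D,\ZZ)$ with $\bigoplus_i H^{\bullet}(D_i,\ZZ)$ over the smooth components $D_i$ of $D$, as in the residue sequence, so that Poincar\'e duality on the projective curves $D_i$ rewrites $\gamma^+\cdot\beta-\gamma^-\cdot\beta$ as $\langle\mathrm{PD}([\gamma^+]-[\gamma^-]),[\beta]\rangle$. Since $H_0(D,\ZZ)$ is free, the universal coefficient theorem makes the evaluation map $H^1(D,\ZZ)\to\mathrm{Hom}(H_1(D,\ZZ),\ZZ)$ injective, so the equality of the two pairings against every $[\beta]\in H_1(D,\ZZ)$ forces $\mathrm{ord}_*\mathcal{M}(\gamma^+,\gamma^-)=\mathrm{PD}([\gamma^+]-[\gamma^-])$.

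The main obstacle, and really the only nonformal point, is the monodromy computation of the second step: one must check that the annular cover $\{U_0,U_1\}$ can be arranged so that the chart-switches of $\beta$ are exactly its crossings with $\pi^{-1}(\gamma)$, and then track the signs through the local biholomorphism $\pi$ so that the monodromy picked up is the intersection number of $\beta$ with $\gamma^+-\gamma^-$ for the stated orientations, rather than its negative. The remaining steps are routine bookkeeping with the order map together with standard appeals to Poincar\'e duality and universal coefficients.
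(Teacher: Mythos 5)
Your proposal is correct and follows essentially the same route as the paper, which presents this theorem as a summary of the preceding construction: the order cocycle is $+1$ on $D^+$, $-1$ on $D^-$, and $0$ elsewhere, and its pairing with a $1$-cycle $\beta$ is $\gamma^+\cdot\beta-\gamma^-\cdot\beta$, which is then identified with the Poincar\'e dual pairing. You supply more detail than the paper does (the monodromy/chart-switch computation and the universal-coefficients step), but the underlying argument is the same.
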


\begin{Example}\label{hirzex}
Consider $(\mathbb{P}^{2},D)$ where $D$ is the smooth cubic curve in $\mathbb{P}^{2}$. If we blow-up a point $p\in D$, we obtain the first Hirzebruch surface $\mathbb{F}_{1}$ with the divisor $D+E$ where $E\cong\mathbb{P}^{1}$ is the exceptional divisor of the blow-up. We will now work with $(\mathbb{F}_{1},D+E)$. Since the analytic Brauer group $H^{2}(\OO_{\mathbb{F}_{1}}^{\times})$ is trivial, by Theorem \ref{mero picard extension thm}, the meromorphic Picard group $\mathrm{Pic}(\mathbb{F}_{1},D+E)$ maps surjectively to the moduli of $\ZZ$-local systems on $D+E$ via the order map
\[
\mathrm{Pic}(\mathbb{F}_{1},D+E)\overset{\mathrm{ord}_{*}}{\rightarrow} H^{1}(D,\ZZ)\oplus H^{1}(E,\ZZ).
\]
There are no non-trivial $\ZZ$-local systems on $E$ since $E\cong\mathbb{P}^{1}$. Hence, we have the surjection 
\[
\mathrm{Pic}(\mathbb{F}_{1},D+E)\overset{\mathrm{ord}_{*}}{\rightarrow} H^{1}(D,\ZZ).
\]

The first Hirzebruch surface $\mathbb{F}_{1}$ can be realized as a $\mathbb{P}^{1}$-bundle over $\mathbb{P}^{1}$ where the divisor $D+E$ on $\mathbb{F}_{1}$ is a 3-to-1 cover of $\mathbb{P}^{1}$ ramified at four points on $\mathbb{P}^{1}$ which we label as $p_{1},p_{2},p_{3},p_{4}$. The divisor $D$ is the standard 2-to-1 cover of $\PP^{1}$ ramified at the four points. Consider the oriented simple closed curve $\gamma_{12}$ containing the points $p_{1},p_{2}$ in its interior and two disjoint connected components $\gamma_{12}^{+},\gamma_{12}^{-}$ of $(D+E)\cap\pi^{-1}(\gamma_{12})$ where $\gamma_{12}^{+}$ corresponds to one branch of $D$ over $\gamma_{12}$ and $\gamma_{12}^{-}$ corresponds to the branch of $E$ over $\gamma_{12}$. By the above theorem, this defines a meromorphic line bundle $\MM(\gamma_{12}^{+},\gamma_{12}^{-})$ and the order of this meromorphic line bundle, $\mathrm{ord}_{*}\MM(\gamma_{12}^{+},\gamma_{12}^{-})$, is a local system that is Poincar\'e dual to the cycle $[\gamma_{12}^{+}]$ in $H_{1}(D,\ZZ)$.

If we now repeat the same argument as above for a oriented simple closed curve $\gamma_{23}$ containing the points $p_{2},p_{3}$ in its interior, we obtain the meromorphic line bundle $\MM(\gamma_{23}^{+},\gamma_{23}^{-})$ where the order of this bundle, $\mathrm{ord}_{*}\MM(\gamma_{23}^{+},\gamma_{23}^{-})$, is a local system that is Poincar\'e dual to the cycle $[\gamma_{23}^{+}]$ in $H_{1}(D,\ZZ)$.

\begin{figure}[h]
	\centering
	\begin{tikzpicture}[scale=1]
	\tikzset{
		partial ellipse/.style args={#1:#2:#3}{
			insert path={+ (#1:#3) arc (#1:#2:#3)}
		}
	}
	\begin{scope}[thick,decoration={
    markings,
    mark=at position .75 with {\arrow{>}}}
    ] 
	\draw[line width=1pt,postaction={decorate}] (-2,0) ellipse (2 and 1);
	\end{scope}

	\draw[dashed, line width=1pt] (0,0) [partial ellipse=0:180:2 and 1];
		\begin{scope}[thick,decoration={
    markings,
    mark=at position .5 with {\arrow{<}}}
    ] 
	\draw[line width=1pt,postaction={decorate}] (0,0) [partial ellipse=0:-180:2 and 1];
	\end{scope}
	\draw[line width=.5pt,color=gray,decorate, decoration={zigzag, segment length=4,amplitude=.9}] (-3,0)  -- (-1,0)	(1,0) -- (3,0);
	\fill (-3,0) circle (0pt) node[below]{$p_1$}
	(-1,0) circle (0pt) node[below]{$p_2$}
	(1,0) circle (0pt)  node[below]{$p_3$}
	(3,0) circle (0pt)  node[below]{$p_4$};
	\node at (-3,0){$\times$};
	\node at (-1,0){$\times$};
	\node at (1,0){$\times$};
	\node at (3,0){$\times$};
	
	\node[below] at (-2,-1.1) {$\gamma_{12}$};
	\node[below] at (0, -1.1) {$\gamma_{23}$};
	
	%\draw[line width=1pt] (-2,1)+(-30:0.3cm]) -- (-2,1);
	%\draw[line width=1pt] (-2,1)+(30:0.3cm]) -- (-2,1);
	%\draw[line width=1pt] (0,-1)+(150:0.3cm]) -- (0,-1);
	%\draw[line width=1pt] (0,-1)+(210:0.3cm]) -- (0,-1);
	\end{tikzpicture}
	\caption{The oriented simple closed curves $\gamma_{12}, \gamma_{23}$ on $\PP^{1}$ which give rise to cycles $\gamma_{12}^{+}, \gamma_{23}^{+}$ on the genus 1 curve $D$.}
	\label{P1 four points}
\end{figure}

Each of the cycles $\gamma_{12}^{+}$ and $\gamma_{23}^{+}$ have trivial self-intersection and $\gamma_{12}^{+}\cdot\gamma_{23}^{+} = 1$. Therefore, the two cycles together form a homology basis for $H_{1}(D,\ZZ)$. As a result, the local systems $\mathrm{ord}_{*}\MM(\gamma_{12}^{+},\gamma_{12}^{-})$ and $\mathrm{ord}_{*}\MM(\gamma_{23}^{+},\gamma_{23}^{-})$ coming from the meromorphic line bundles $\mathrm{ord}_{*}\MM(\gamma_{12}^{+},\gamma_{12}^{-})$ and $\mathrm{ord}_{*}\MM(\gamma_{23}^{+},\gamma_{23}^{-})$ respectively generate the cohomology group $H^{1}(D,\ZZ)$.
\end{Example}

\subsection{Meromorphic line bundles for the smooth cubic in $\mathbb{P}^{2}$} 

We conclude the paper by constructing all meromorphic line bundles on $(\mathbb{P}^{2},D)$ where $D$ is the smooth cubic curve in $\mathbb{P}^{2}$. Let $\pi:\mathbb{F}_1\to\PP^2$ be the blowup of a point $p\in D$ and $E$ the exceptional divisor. By Theorem \ref{mero line bundle blowup thm}, we know that the pullback defines an equivalence between $\mathrm{Pic}(\PP^{2},D)$ and $\mathrm{Pic}(\mathbb{F}_{1},D+E)$. Using the results of Example~\ref{hirzex}, we uniquely determine meromorphic line bundles $\MM_{1}$, $\MM_{2}$ on $(\PP^{2},D)$ such that $\pi^{*}\MM_{1} = \MM(\gamma_{12}^{+},\gamma_{12}^{-})$ and $\pi^{*}\MM_{2} =\MM(\gamma_{23}^{+},\gamma_{23}^{-})$.  
In Example~\ref{curve in P2 mero}, we showed that the meromorphic Picard group of $(\PP^{2},D)$ is given by
\[
\mathrm{Pic}(\PP^{2},D) = \ZZ_{3}\oplus\ZZ^{2}. 
\] 
We now explain how the meromorphic line bundles $\MM_1,\MM_2$, together with the tautological line bundle, generate the meromorphic Picard group. 
\begin{Theorem}
Any meromorphic line bundle on $(\mathbb{P}^{2},D)$ is isomorphic to exactly one of
\[
\OO_{\PP^{2}}(1)^{k}\otimes \mathcal{M}_{1}^{l_1}\otimes \mathcal{M}_{2}^{l_2},
\]
where $k\in\ZZ_3$ and $l_{1},l_{2}\in\ZZ$.
\end{Theorem}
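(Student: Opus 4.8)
The plan is to combine the exact sequence
\[
0\rightarrow\mathrm{Pic}^{\mathrm{alg}}(U)\rightarrow\mathrm{Pic}(\PP^{2},D)\overset{\ord_{*}}{\rightarrow}H^{1}(D,\ZZ)\rightarrow 0,
\]
furnished by Theorem~\ref{mero picard extension thm} (which applies since $H^{2}(\OO_{\PP^{2}}^{\times})=0$), with the identifications $\mathrm{Pic}^{\mathrm{alg}}(U)\cong\ZZ_{3}$ and $H^{1}(D,\ZZ)\cong\ZZ^{2}$ of Example~\ref{curve in P2 mero}. Since the tensor product of meromorphic line bundles is the group law on $\mathrm{Pic}(\PP^{2},D)=H^{1}(\OO_{\PP^{2}}^{\times}(*D))$ and $\ord_{*}$ is a group homomorphism, it suffices to exhibit explicit generators for the sub- and quotient groups and to lift them.

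For the subgroup $\ker\ord_{*}=\mathrm{Pic}^{\mathrm{alg}}(U)$: by the proof of Theorem~\ref{mero picard extension thm}, this subgroup is the image of the natural map $\mathrm{Pic}(\PP^{2})\rightarrow\mathrm{Pic}(\PP^{2},D)$ regarding a holomorphic line bundle as a meromorphic one with no poles along $D$. Writing $D=\{F=0\}$ for a cubic form $F$, the section $F$ provides a meromorphic trivialization of $\OO_{\PP^{2}}(3)=\OO_{\PP^{2}}(D)$, so this image is cyclic, generated by $\OO_{\PP^{2}}(1)$, of order dividing $3$; comparison with $\mathrm{Pic}^{\mathrm{alg}}(U)\cong\ZZ_{3}$ shows the order is exactly $3$. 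Hence $\ker\ord_{*}=\{\OO_{\PP^{2}}(1)^{k}:k\in\ZZ_{3}\}$.

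For the quotient $H^{1}(D,\ZZ)$: by Theorem~\ref{mero line bundle blowup thm}, pullback along $\pi:\mathbb{F}_{1}\rightarrow\PP^{2}$ is an isomorphism $\mathrm{Pic}(\PP^{2},D)\overset{\cong}{\rightarrow}\mathrm{Pic}(\mathbb{F}_{1},D+E)$, and the commutative diagram in its proof shows this isomorphism intertwines $\ord_{*}$ with the order map of $(\mathbb{F}_{1},D+E)$ under the identification $H^{1}(D+E,\ZZ)=H^{1}(D,\ZZ)\oplus H^{1}(E,\ZZ)=H^{1}(D,\ZZ)$, using $H^{1}(E,\ZZ)=0$. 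Therefore $\ord_{*}\MM_{i}=\ord_{*}(\pi^{*}\MM_{i})$, and by Example~\ref{hirzex} together with Theorem~\ref{mero line bundle local system theorem} this class is Poincar\'e dual in $H_{1}(D,\ZZ)$ to $[\gamma_{12}^{+}]$ for $i=1$ and to $[\gamma_{23}^{+}]$ for $i=2$ — the component $\gamma^{-}$ contributing nothing, as it is a loop in the rational curve $E$. As recorded in Example~\ref{hirzex}, the cycles $\gamma_{12}^{+},\gamma_{23}^{+}$ have vanishing self-intersection and mutual intersection $1$, hence form a $\ZZ$-basis of $H_{1}(D,\ZZ)\cong\ZZ^{2}$; dually $\ord_{*}\MM_{1},\ord_{*}\MM_{2}$ form a $\ZZ$-basis of $H^{1}(D,\ZZ)$.

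Assembling these: given any meromorphic line bundle $\MM$ on $(\PP^{2},D)$, there are unique integers $l_{1},l_{2}$ with $\ord_{*}\MM=l_{1}\ord_{*}\MM_{1}+l_{2}\ord_{*}\MM_{2}$; then $\MM\otimes\MM_{1}^{-l_{1}}\otimes\MM_{2}^{-l_{2}}$ lies in $\ker\ord_{*}$, so equals $\OO_{\PP^{2}}(1)^{k}$ for a unique $k\in\ZZ_{3}$, giving the displayed normal form. Uniqueness of the triple $(k,l_{1},l_{2})$ is immediate: $(l_{1},l_{2})$ is read off from $\ord_{*}\MM$ and then $k$ is determined in $\ZZ_{3}$. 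The only step that genuinely needs attention is the compatibility of the blowup isomorphism with the two order maps — i.e.\ that $\ord_{*}\MM_{i}$ is literally $\mathrm{PD}[\gamma^{+}]$ on $D$ with no residual twist along $E$ — but this is exactly the commutative diagram already established in the proof of Theorem~\ref{mero line bundle blowup thm}, so no new argument is required; the remainder is bookkeeping.
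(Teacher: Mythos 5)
Your proof is correct and follows essentially the same route as the paper: both arguments use the extension $0\to\mathrm{Pic}^{\mathrm{alg}}(U)\to\mathrm{Pic}(\PP^{2},D)\to H^{1}(D,\ZZ)\to 0$, identify the kernel of $\ord_{*}$ with the cokernel of the degree map (the cyclic group $\ZZ_{3}$ generated by $\OO_{\PP^{2}}(1)$), and use the blowup equivalence with $(\mathbb{F}_{1},D+E)$ to see that $\ord_{*}\MM_{1},\ord_{*}\MM_{2}$ form a basis of $H^{1}(D,\ZZ)$. Your write-up simply supplies more of the bookkeeping (uniqueness of $(k,l_{1},l_{2})$, the order of $\OO_{\PP^{2}}(1)$ being exactly $3$) that the paper leaves implicit.
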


\begin{proof}
As the pullbacks of the meromorphic line bundles $\MM_{1}$ and $\MM_{2}$ on $(\PP^{2},D)$ are the meromorphic line bundles $\MM(\gamma_{12}^{+},\gamma_{12}^{-})$  and $\MM(\gamma_{23}^{+},\gamma_{23}^{-})$ on $(\mathbb{F}_{1},D+E)$ respectively, the local systems corresponding to $\MM_{1}$ and $\MM_{2}$ under the order map generates all of $H^{1}(D,\ZZ)$. If the order is trivial, then it must be in the image of the map
\[
\mathrm{Pic}(\mathbb{P}^{2})\rightarrow\mathrm{Pic}(\mathbb{P}^{2},D).
\]
However, the image of the above map is the cokernel of the degree map
\begin{equation}	\label{eq:degree map cubic P2}
H^{0}(D,\ZZ)\overset{\cdot3}{\rightarrow}\text{Pic}(\mathbb{P}^{2}),
\end{equation}
which is the cyclic group $\ZZ_{3}$ generated by the line bundle $\OO_{\mathbb{P}^{2}}(1)$, as required.
\end{proof}

Observe that since we are working with the smooth cubic in $\mathbb{P}^{2}$ and $\mathbb{P}^{2}$ has vanishing Hodge numbers $h^{0,1}, h^{0,2}$,  Corollary \ref{mero line bundle=H2(U,Z)} implies that the meromorphic Picard group $\mathrm{Pic}(\mathbb{P}^{2},D)$ is isomorphic to the analytic Picard group $\mathrm{Pic}(U^{\mathrm{an}})$ of the complement $U$.  On the other hand, the algebraic Picard group of $U$ can be characterized as the cokernel of the degree map in \eqref{eq:degree map cubic P2} and so $\text{Pic}^{\text{alg}}(U)\cong\ZZ_{3}$, with generator $\OO_{\mathbb{P}^{2}}(1)|_{U}$. Thus, if we restrict the meromorphic line bundles $\mathcal{M}_{1}$ and $\mathcal{M}_{2}$ on $(\mathbb{P}^{2},D)$ to $U$, these give holomorphic line bundles on $U$ that are not algebraic.

\bibliographystyle{plain}
\bibliography{bibliography}

\end{document}